\newtheorem{theorem}{Theorem}
\newtheorem{proposition}[theorem]{Proposition}
\newtheorem{corollary}[theorem]{Corollary}
\newtheorem{observation}{Observation}
\newtheorem*{thm_nonumber}{Theorem}
\newcommand{\N}{\mathbb{N}}
\newcommand{\R}{\mathbb{R}}
\newcommand{\bbS}{\mathbb{S}}
\newcommand{\bbV}{\mathbb{V}}
\newcommand{\X}{\mathbb{X}}
\newcommand{\Z}{\mathbb{Z}}
\newcommand{\cD}{\mathscr{D}}
\newcommand{\cE}{\mathscr{E}}
\newcommand{\cF}{\mathscr{F}_{\lambda}}
\newcommand{\cH}{\mathscr{H}}
\newcommand{\cI}{\mathscr{I}}
\newcommand{\cO}{\mathcal{O}}
\newcommand{\cP}{\mathscr{P}}
\newcommand{\cQ}{\mathscr{Q}}
\newcommand{\cR}{\mathcal{R}}
\newcommand{\cT}{\mathscr{T}}
\newcommand{\cX}{\mathscr{X}}
\newcommand{\bfv}{\mathbf{v}}
\newcommand{\bfw}{\mathbf{w}}
\newcommand{\bA}{\bar{A}}
\newcommand{\F}{F_{\lambda}}
\newcommand{\cIe}{\mathscr{I}^e}
\newcommand{\Ie}{I^e}
\newcommand{\Xe}{X^e}
\newcommand{\brho}{\rho^*}
\newcommand{\lZ}{(\lambda\Z)^2}
\newcommand{\phil}{\varphi_{\lambda}}
\newcommand{\vI}{\langle e/2\rangle}
\newcommand{\vk}{\langle e \rangle}
\def\defn#1{\textbf{#1}}
\def\Fix#1{\mathrm{Fix}\,#1}
\def\mod#1{\hskip 15pt \left(\mathrm{mod}\;\,#1\right)}%   modulus of a congruence
\def\fl#1{\lfloor #1\rfloor}%   floor function
\def\flsq#1{\langle #1\rangle}%  short-hand floor of square root function
\def\Bfl#1{\left\lfloor #1 \right\rfloor}%   Big floor function
\def\ceil#1{\lceil #1\rceil}%   ceilng function
\def\Bceil#1{\left\lceil #1\right\rceil}%   ceilng function
\title{Asymptotics in a family of linked strip maps}
\author{Heather Reeve-Black \& Franco Vivaldi}
\address{School of Mathematical Sciences, Queen Mary, 
University of London, London E1 4NS, UK}
\email{h.reeve-black@qmul.ac.uk}
\email{f.vivaldi@qmul.ac.uk}
\date{\today}
\begin{document}

\begin{abstract}
We apply round-off to planar rotations, obtaining a one-parameter family of 
invertible maps of a two-dimensional lattice. 
As the angle of rotation approaches $\pi/2$, the fourth iterate of the map produces 
piecewise-rectilinear motion, which develops along the sides of convex polygons. 

We characterise the dynamics ---which resembles outer billiards of polygons---
as the concatenation of so-called strip maps, each providing an elementary perturbation of 
an underlying integrable system. Significantly, there are orbits which are subject to
an arbitrarily large number of these perturbations during a single revolution, resulting in the 
appearance of a novel discrete-space version of near-integrable Hamiltonian dynamics.

We study the asymptotic regime of the limiting integrable system analytically, and
numerically some features of its very rich near-integrable dynamics.
We unveil a dichotomy: there is one regime in which the nonlinearity tends to zero, 
and a second where it doesn't. In the latter case, numerical experiments suggest 
that the distribution of the periods of orbits is consistent with that of random dynamics; 
in the former case the fluctuations result in an intricate structure of resonances.
\end{abstract}

\maketitle

%%%%%%%%%%%%%%%%%%%%%%%%%%%%%%%%%%%%%%%%%%%%%%%%%%%%%%%%%%%%%%%%%%%%%%%%%%
\section{Introduction} \label{sec:Introduction}
%%%%%%%%%%%%%%%%%%%%%%%%%%%%%%%%%%%%%%%%%%%%%%%%%%%%%%%%%%%%%%%%%%%%%%%%%%

We study an asymptotic problem of planar rotations subject to round-off, 
which leads to a family of \defn{linked strip maps} on lattices.
These two-dimensional dynamical systems originate from repeated applications 
of a simple scattering mechanism, which we now describe.
We consider a line $l$ on the plane, which acts as a refracting surface
for a bundle of incoming parallel rays. As the rays cross the surface,
they change direction while remaining parallel.
The dynamics is provided by a piecewise-constant velocity field $\mathbf{v}$ 
which assumes the values $\bfv_1$ and $\bfv_2$ on the two half-planes 
determined by $l$ (see figure \ref{fig:Scattering}, left). 
We stipulate, say, that $\bfv(z)=\bfv_1$ on $l$.
The components of the vector field must agree in orientation: if $\mathbf{n}$ 
is any normal to $l$, we require the 
scalar products $\bfv_1\cdot \mathbf{n}$ and $\bfv_2\cdot \mathbf{n}$ to be 
non-zero and agree in sign. 
The orbits of the flow $\dot z=\bfv(z)$ are parallel rays joined at $l$.

Next we perturb the flow by turning it into a map:
\begin{equation}\label{eq:F}
F_\lambda:\R^2\to\R^2\qquad F_\lambda(z)=z+\lambda \bfv(z),
\end{equation}
where $\lambda>0$ is the perturbation parameter.
The map $F_\lambda$ is area-preserving, and it agrees with the time-$\lambda$ advance map of the flow,
except on the strip $\Sigma$ bounded by $l$ (included) and $\F^{-1}(l)$ (excluded). 
Upon crossing the line $l$, an orbit of $\F$ will jump from an orbit of the flow 
to a nearby orbit, which we interpret as a perturbation of the original 
motion (figure \ref{fig:Scattering}, right). 
If $d$ is the distance between $z$ and $l$ along the incoming field $\bfv_1$, then the resulting
perturbation is determined solely by the remainder of $d$ modulo $\Vert\lambda\bfv_1\Vert$. 
The map $\F$ is invertible precisely when the orthogonal component of the incoming 
field is the same as that of the outgoing field; we assume that this transversality condition holds.
By varying the parameter $\lambda$, we can adjust the width of $\Sigma$, hence the 
size of the perturbation. Variants of this construction will be considered later.

%%%%%%%%%%%%%%%%%%%%%%%%%%%%%%%%%%%%%%%%%%%%%%%%%%%%%%%% FIGURE
\begin{figure}[b]
        \centering
        \begin{minipage}{7cm}
          \centering
	  \includegraphics[scale=1.0]{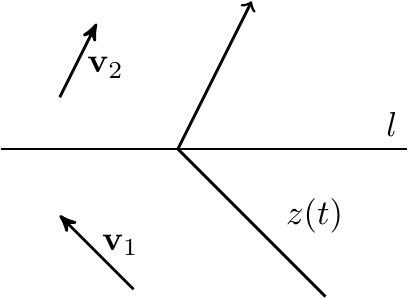}
        \end{minipage}
      %  \quad
        \begin{minipage}{7cm}
	  \centering
	  \includegraphics[scale=1.0]{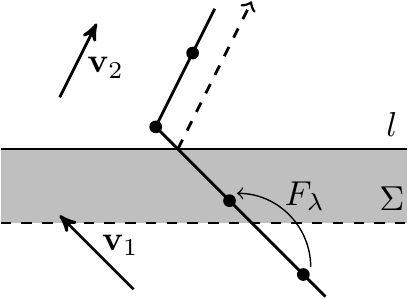} 
	\end{minipage}
        \caption{Rays crossing a surface. 
         Left: the flow; right: the map $\F$ of equation (\ref{eq:F}).}
        \label{fig:Scattering}
\end{figure}
%%%%%%%%%%%%%%%%%%%%%%%%%%%%%%%%%%%%%%%%%%%%%%%%%%%%%%%%

To obtain non-trivial dynamics, we need repeated scattering. We consider a (finite or infinite) sequence 
$(l_1,l_2,\ldots)$ of lines, rays, and segments which partition the plane into convex domains.
We assume that on each domain there is a constant velocity field $\bfv_j$ and that 
the fields in any two adjacent domains satisfy the transversality conditions stated above. 
Then we have a flow defined over $\R^2$ except at the points where two or more lines 
$l_j$ intersect. 
%At these special points the ambiguity can often be resolved by a 
%continuity argument, as long as we have the option of assigning the zero field to 
%the intersection point (which may cause some incoming rays to terminate there). 
The orbits of the flow are polygonal lines, whose vertices belong to the lines $l_j$. 

As above, we turn the flow into a map $F_\lambda$ which acts by translation by the vectors $\bfv_j$.
As a result, each line $l_j$ gives rise to an adjacent strip $\Sigma_j$, where flow and 
map disagree. In the regions where strips overlap, the map $F_\lambda$ may be defined, say,
by first ordering the strips and then applying the map of highest ranking. 
The map sending a point in one strip to the image point in the next strip is 
straightforward ---it only requires the computation of a remainder.  
So the strip-to-strip transit may be eliminated from the dynamics, and the map $F_\lambda$ 
becomes a map on $\bigcup\Sigma_j$, which we call a \defn{linked strip map}. 

Now let $\bbV$ be the smallest $\mathbb{Z}$-module containing all the 
vectors $\bfv_j$. Its dimension depends on the rational dependencies 
among these vectors, and in the simplest nontrivial case we have a 
two-dimensional lattice. 
The lattice $\bbV$ and its translates are invariant under $F_\lambda$ and we are interested in 
the restriction of $F_\lambda$ to these lattices, where
the $\lambda\to 0$ scaling of the vector field brings about a highly non-trivial 
`quasi-continuum' limit on these lattices.

We consider the case in which the orbits of the flow are closed polygons, 
which foliate the plane, giving an integrable Hamiltonian system.
The map $F_\lambda$ of equation (\ref{eq:F}) will be viewed as a perturbation of such
an integrable system. Note however, that the transversality condition introduced
above is no longer sufficient to guarantee that $F_\lambda$ is invertible or 
area-preserving (although it will be so in our case, due to time-reversal symmetry).

Integrable systems with polygonal invariant curves have been studied in the 
context of ultradiscrete (or tropical) dynamics \cite{Iwao,Nobe}.
Perturbed systems of the linked strip map type appear in outer billiards of convex 
polygons (see figure \ref{fig:StripMaps} (a) and \cite{Tabachnikov}). 
Here the analogous map is the second iterate of the 
outer billiard map for all points sufficiently far away from the origin.
This coincidence suggests a way of extending the map to the intersection of the 
strips, which are found near the central polygon. This programme was carried out 
by Schwartz \cite{Schwartz11}, resulting in the construction of the so-called 
the \textit{pinwheel map}.
Pinwheel maps may be viewed as a generalisation of outer billiard maps, in the sense 
that the unbounded orbits of the two systems are in one-to-one correspondence.

%%%%%%%%%%%%%%%%%%%%%%%%%%%%%%%%%%%%%%%%%%%%%%%%%%%%%%%% FIGURE
\begin{figure}[t]
        \centering
        \begin{minipage}{7cm}
          \centering
	  \includegraphics[scale=0.8]{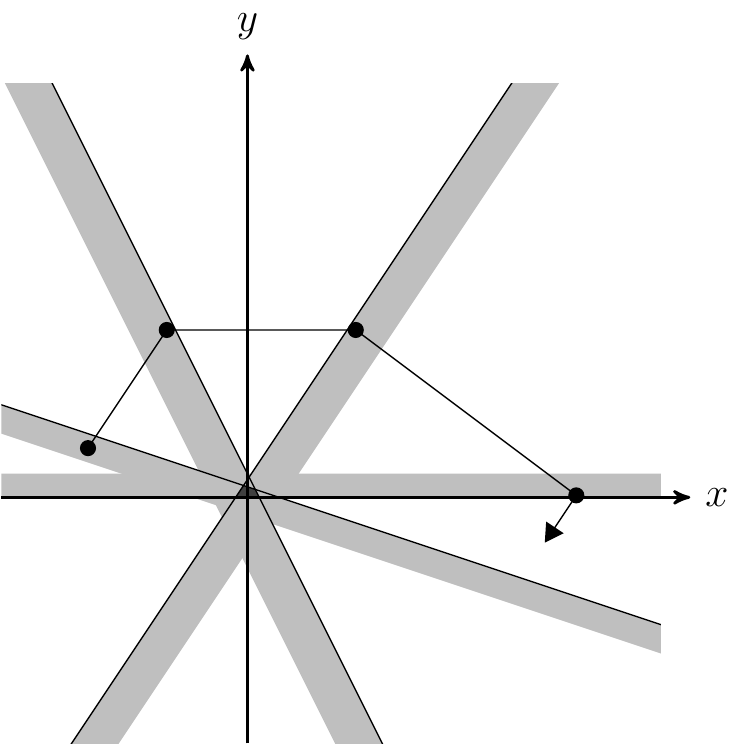} \\
	  (a) Outer billiard \\
        \end{minipage}
        \quad
        \begin{minipage}{7cm}
	  \centering
	  \includegraphics[scale=0.8]{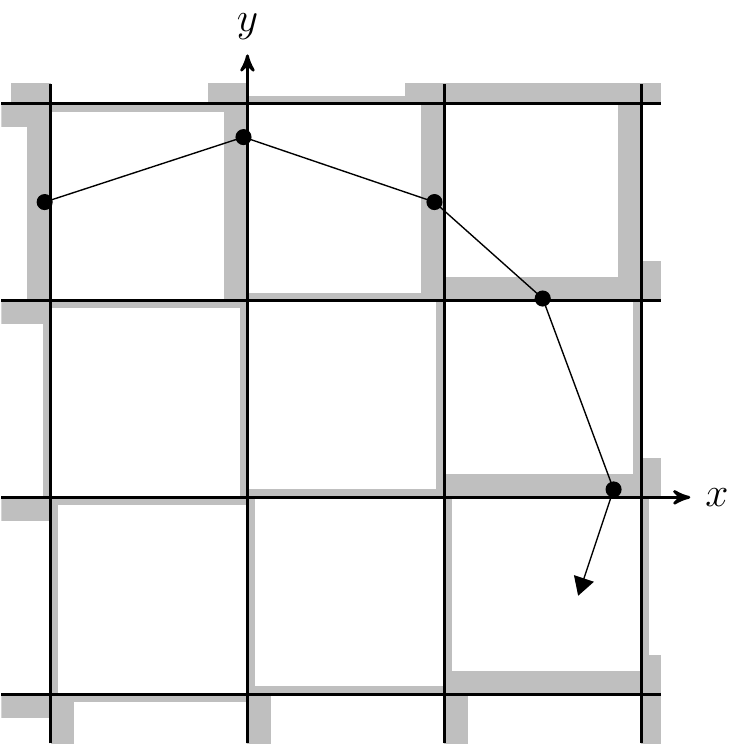} \\
	  (b) discretised rotations \\
	\end{minipage}
        \caption{Schematic representation of the linked strip map construction for
(a) the outer billiard of a polygon, and (b) the discretised rotation.
The latter construction features infinitely many strips.}
        \label{fig:StripMaps}
\end{figure}
%%%%%%%%%%%%%%%%%%%%%%%%%%%%%%%%%%%%%%%%%%%%%%%%%%%%%%%%

In this paper we explore a specific invertible linked strip map of a two-dimensional 
lattice, which results from discretising the space of a one-parameter family
of planar rotations via a round-off procedure. 
The novel feature of this model ---to be defined in the next section--- is that 
the number of lines $l_j$ is infinite, and moreover the invariant curves of the 
underlying integrable system, which are convex polygons, have an arbitrarily large 
number of sides.
When the integrable flow is turned into a map, the orbits located sufficiently 
far away from the origin meet an arbitrarily large number of strips during a 
single revolution.
This results in a perturbation of increasing complexity---a feature which
cannot be achieved in outer billiards of polygons without changing the shape 
of the billiard. This asymptotic regime is the primary interest of this paper.

%%%%%%%%%%%%%%%%%%%%%%%%%%%%%%%%%%%%%%%%%%%%%%%%%%%%%%%%%%%%%%%%%%%%%%%%%%
\subsection{The model}\label{section:TheModel}
%%%%%%%%%%%%%%%%%%%%%%%%%%%%%%%%%%%%%%%%%%%%%%%%%%%%%%%%%%%%%%%%%%%%%%%%%%

The map under study is given by
\begin{equation} \label{eq:Map}
F:\,\Z^2\rightarrow\Z^2 \hskip 20pt
F(x,y)=(\lfloor \lambda x \rfloor - y, \,x)
\hskip 20pt \lambda=2\cos(2\pi\nu)
\end{equation}
where $\lfloor \cdot \rfloor$ is the floor function 
---the largest integer not exceeding its argument.
If we remove the floor function in (\ref{eq:Map}), then
we obtain a one-parameter family of linear maps of the plane, 
which, for $|\lambda|<2$, are linearly conjugate to a rotation by the angle $2\pi\nu$ ($\nu$ is the rotation number).
The floor function discretises the space, rounding the image point 
$(\lambda x-y,x)$ to the nearest lattice point on the left. 
This map is invertible as well as reversible. (Note that a generic discretised 
rotation with round-off will not have either property.) 
In this system the discretisation length is fixed, and the 
limit of vanishing discretisation corresponds to motions at infinity.

The model (\ref{eq:Map}) has strong arithmetical features.
The original motivation for its study came from dynamical systems 
\cite{Vivaldi94b,LowensteinHatjispyrosVivaldi,LowensteinVivaldi98,
LowensteinVivaldi00,BosioVivaldi,VivaldiVladimirov,KouptsovLowensteinVivaldi02};
subsequently, this model was studied in number theory in the area of shift 
radix systems \cite{AkiyamaBrunottePethoThuswaldner,AkiyamaBrunottePethoSteiner06,AkiyamaBrunottePethoSteiner08}.

The dynamics of (\ref{eq:Map}) is poorly understood. It has been conjectured that 
for all real $\lambda$ with $|\lambda|<2$ all orbits of $F$ are periodic 
\cite{AkiyamaBrunottePethoThuswaldner,Vivaldi06}, but this conjecture has been proved
for only \textit{eight} values of $\lambda$ (besides three trivial cases $\lambda=0,\pm1$ 
where the map $F$ is of finite order). 
These parameters correspond to the rational values of the rotation number $\nu$ 
for which $\lambda$ is a quadratic irrational
\cite{LowensteinHatjispyrosVivaldi,KouptsovLowensteinVivaldi02,AkiyamaBrunottePethoSteiner08}.
The proof relies on an embedding in a two dimensional torus, where the system becomes 
an exactly renormalisable piecewise isometry, which is a zero-entropy system.
The lack of renormalisability for algebraic parameters of degree three or
higher has so far prevented further progress.

The map (\ref{eq:Map}) is known to have infinitely many periodic orbits for all 
parameter values \cite{AkiyamaPetho}; however these orbits have zero density. 
The analysis of rational $\lambda$-values (for which $\nu$ is irrational) 
gives some insight into the difficulties one encounters to establish periodicity.
In the simplest scenario, where the denominator of $\lambda$ is the power of a prime $p$,
the map $F$ admits a dense embedding in the ring $\Z_p$ of $p$-adic integers.
The embedding dynamics has positive entropy, being the composition of an isometry
and a full shift \cite{BosioVivaldi,VivaldiVladimirov}. 
The periodic orbits of the round-off map form a thin subset of all periodic orbits of the
embedding system: establishing their properties is predictably difficult.

In this work we consider the limit $\lambda\to 0$, 
which we refer to as the \defn{integrable limit}. 
As pointed out above, at $\lambda=0$ the dynamics is trivial: there is no round-off 
and the mapping has order four ($\nu=1/4$).
In this parameter regime there is a new natural embedding of $F$, this time into the plane,
and a new dynamical mechanism, namely a discrete-space version of linked strip maps.
Some properties of this systems were studied in \cite{ReeveBlackVivaldi}, and we
will review them below.

%%%%%%%%%%%%%%%%%%%%%%%%%%%%%%%%%%%%%%%%%%%%%%%%%%%%%%%% FIGURE
\begin{figure}[t]
        \centering
        \begin{minipage}{7cm}
          \centering
	  \includegraphics[scale=0.35]{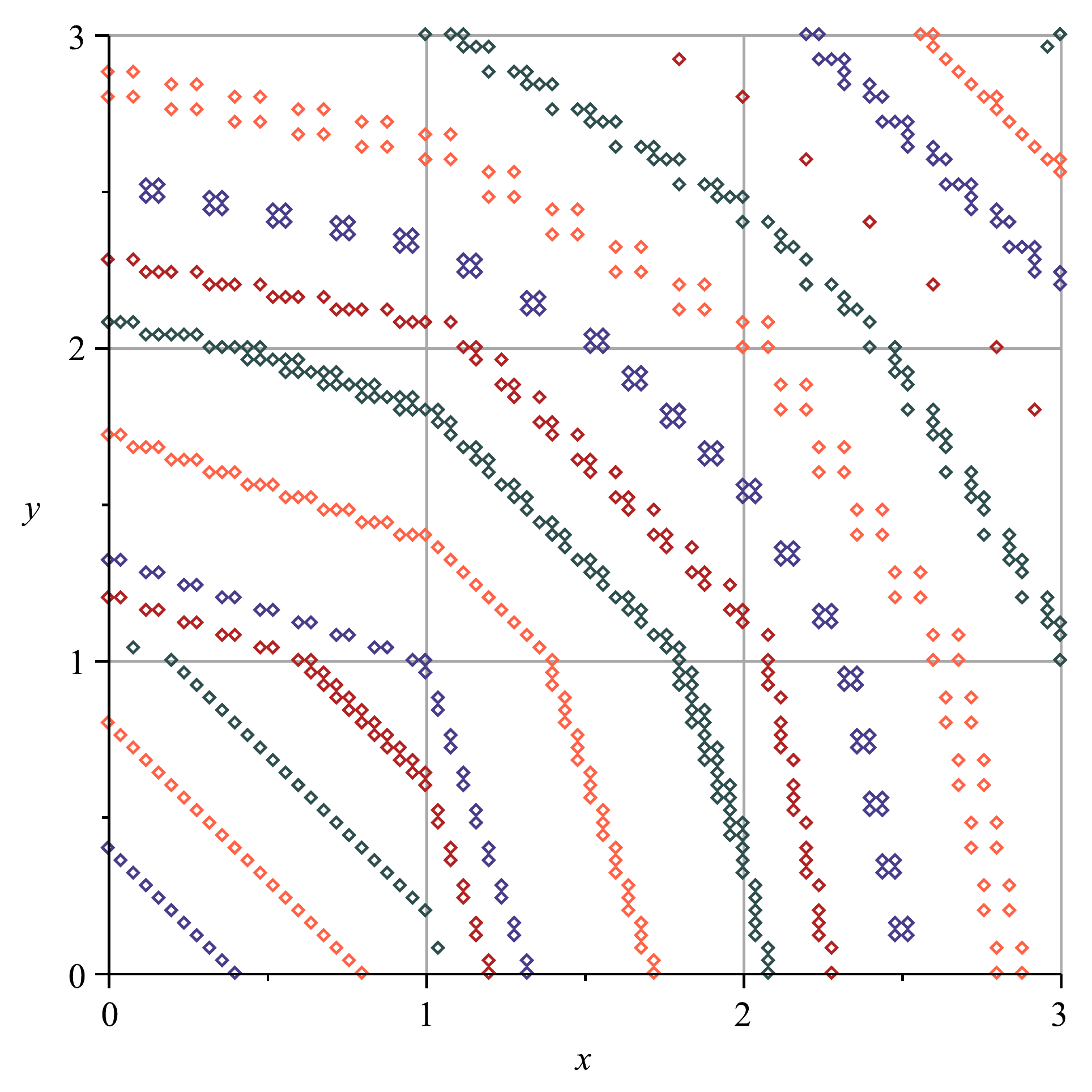} \\
	  (a) $\; \lambda=1/25$ \\
        \end{minipage}
        \quad
        \begin{minipage}{7cm}
	  \centering
	  \includegraphics[scale=0.35]{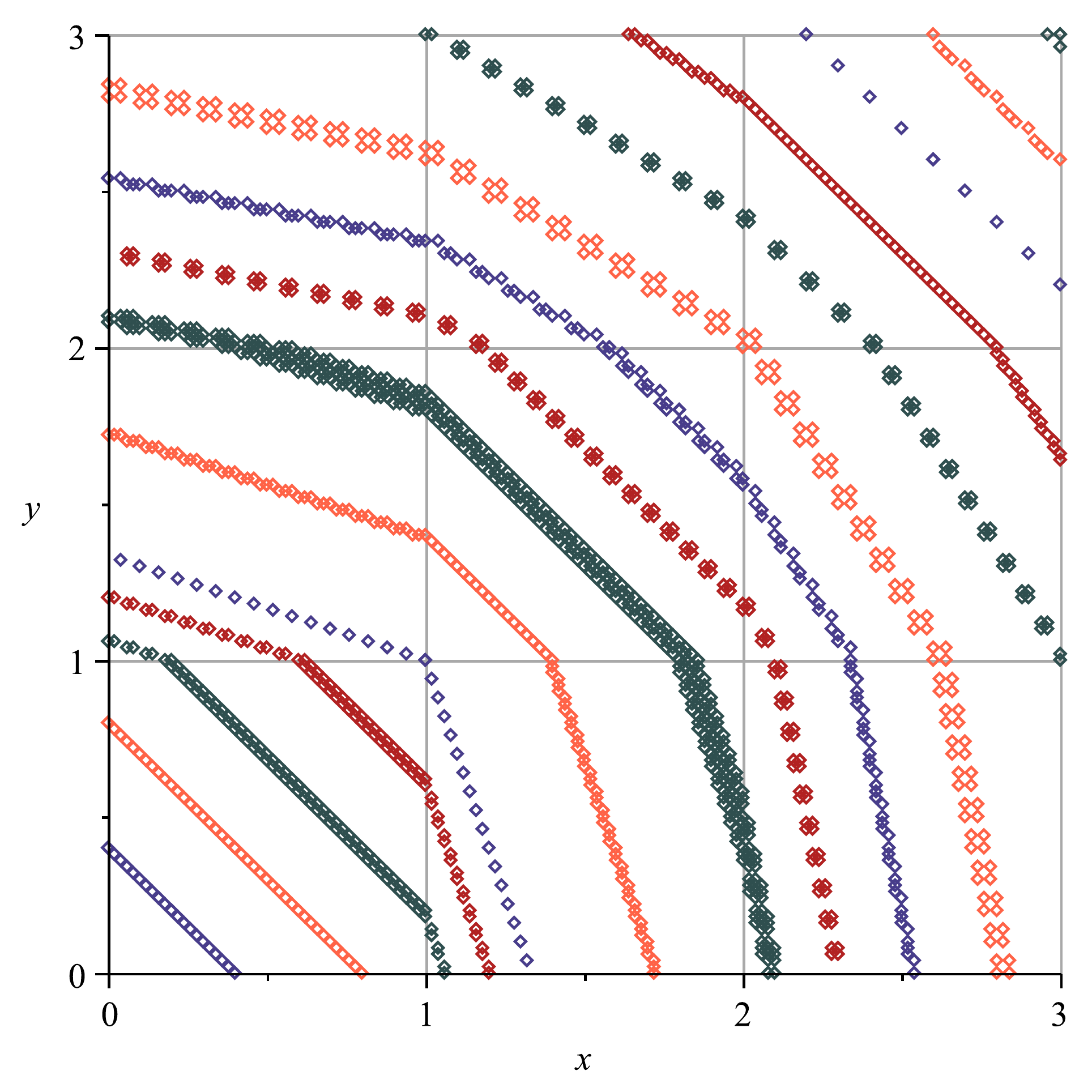} \\
	  (b) $\; \lambda=1/50$ \\
	\end{minipage}
        \caption{Some periodic orbits of the rescaled map $F_{\lambda}$ for small values of 
the parameter $\lambda$. The view is restricted to the first quadrant.
The lattice spacing is such that each unit distance contains 
$1/\lambda$ lattice points.}
        \label{fig:PolygonalOrbits}
\end{figure}
%%%%%%%%%%%%%%%%%%%%%%%%%%%%%%%%%%%%%%%%%%%%%%%%%%%%%%%%

The connection with linked strip maps stems from the fact that, as $\lambda$ approaches 0,
the iterated map $F^4$ is close to the identity,
suggesting the 
introduction of a \textit{discrete vector field} on the scaled lattice $(\lambda\mathbb{Z})^2$:
$$ 
  \bfv_\lambda(z) = F_\lambda^4(z) - z,
$$
where $F_\lambda$ is a scaled version of (\ref{eq:Map}):
\begin{equation}\label{eq:F_lambda}
\F: \lZ \to \lZ 
 \hskip 40pt
 \F(z)=\lambda F(z/\lambda)
  \hskip 40pt
 \lambda>0.
\end{equation}
For $\lambda$ sufficiently close to 0, the module $\bbV$ of $\bfv_\lambda$ is $(\lambda\mathbb{Z})^2$.
The parameter $\lambda$ controls not only the rotation number $\nu$, but also the lattice spacing.
It turns out that, as $\lambda\to 0$, the field $\bfv_\lambda$ becomes piecewise-constant, 
albeit in a non-uniform manner. The domains of rectilinear motion become a regular array of squares, 
bounded by an infinite array of orthogonal lines $l_j$.

More precisely, there is an auxiliary integrable Hamiltonian vector field $\bfw$ on $\R^2$, 
with the property that, if $\lambda$ is sufficiently small, then $\lambda\bfw$ 
agrees with $\bfv_\lambda$ over any bounded domain of the plane, apart from regions of
negligible total measure. This field is given by:
\begin{equation}\label{eq:w}
 \bfw: \; \R^2 \to \Z^2
\hskip 40pt
\bfw(x,y)=(2\fl{ y }+1,-(2\fl{ x }+1)).
\end{equation}
The convex domains on which $\bfw$ is constant are bounded by the array of
orthogonal lines $x=i$ and $y=j$, for integer $i,j$.
These domains are translated unit squares (called \defn{boxes})
\begin{equation}\label{eq:B_mn}
 B_{m,n} = (m,n) + [0,1)^2 \hskip 40pt m,n\in\Z
\end{equation}
and the field across adjacent boxes satisfies the transversality condition mentioned above. 
The module $\bbV$ of $\bfw$ is generated by $(1,1)$ and $(1,-1)$, 
and hence has index 2 in $\Z^2$.
%and we denote the value of $\bfw$ on $B_{m,n}$ as
%\begin{equation} \label{eq:w_mn}
% \bfw_{m,n}=(2n+1,-(2m+1)). 
%\end{equation}
The corresponding Hamiltonian $\cH$ is given by
\begin{equation} \label{eq:Hamiltonian}
 \cH: \; \R^2 \; \to \R
 \hskip 40pt
 \cH(x,y) = P(x)+P(y),
\end{equation}
where $P$ is the piecewise-affine function
\begin{equation}\label{eq:P}
P:\R\to \R \hskip 40pt P(x)=\fl{x}^2+(2\fl{x}+1)\{x\},
\end{equation}
and $\{x\}$ denotes the fractional part of $x$.
The orbits of $\cH$ are convex polygons, with an increasing number of sides 
as one moves away from the origin; near the origin they are squares, while at 
infinity they approach circles.
For small $\lambda$, the composite map $F_\lambda^4$ is equal to the 
time-$\lambda$ advance map of the flow along the edges of the polygons. 
The perturbation occurs near the vertices, where the orbits are not differentiable.

This structure differs from the pinwheel map in an important respect.
Not only is the number of domains over which the field is constant infinite, 
but in addition, as the distance from the origin increases, the number of 
components encountered by individual orbits also increases without bounds.
In our model the auxiliary vector field $\lambda\bfw$ is not necessarily equal 
to the discrete vector field $\bfv_\lambda$---the two may differ within the strips.
This discrepancy ---while creating additional complications--- admits a simple description.

Thus, in the integrable limit, the rescaled map $F_\lambda$ is a perturbation of an
integrable Hamiltonian system, but the integrable system is no longer a rotation, 
and the perturbation is no longer caused by round-off. 
Furthermore, the integrable system is nonlinear, i.e., its time-advance 
map satisfies a twist condition. When the perturbation is switched on, a 
discrete-space version of near-integrable Hamiltonian dynamics emerges on 
the lattice (see figure \ref{fig:PolygonalOrbits}).

\medskip

%%%%%%%%%%%%%%%%%%%%%%%%%%%%%%%%%%%%%%%%%%%%%%%%%%%%%%%%%%%%%%%%%%%%%%%%%%
\subsection{Outline of the paper \& main results}
%%%%%%%%%%%%%%%%%%%%%%%%%%%%%%%%%%%%%%%%%%%%%%%%%%%%%%%%%%%%%%%%%%%%%%%%%%

The study of the rescaled map $\F$ at infinity begins with the analysis of
the underlying integrable system.
The invariant polygons of the Hamiltonian system (\ref{eq:Hamiltonian}) 
form countably many foliations, called \defn{polygon classes}. All polygons in a class 
have the same number of sides, and the classes are indexed by the set $\cE$ of natural 
numbers which can be represented as the sum of two squares:
\begin{equation} \label{eq:cE}
 \cE = \{e_i \, : \; i\geqslant 0\} = \{0,1,2,4,5,8,9,10,13,16,17,\dots \} .
\end{equation}
As $e_i\to\infty$, the corresponding foliation becomes a thin polygonal 
annulus of approximate radius $\sqrt{e_i}$.
Further definitions and results from \cite{ReeveBlackVivaldi} 
will be reviewed in section \ref{sec:IntegrableFlow}.

%%%%%%%%%%%%%%%%%%%%%%%%%%%%%%%%%%%%%%%%%%%%%%%%%%%%%%%%%%%%%%%%%%%%%%%%%%
\medskip

Then we construct a Poincar\'{e} return map for the integrable system.
We consider a the time-advance map of the flow and its first return to a thin 
strip $X$ placed along the symmetry line $x=y$. 
The width of the strip is equal to the magnitude of the the field $\lambda\bfw$
and since a point and its translate by $\lambda\bfw$ are naturally identified, 
we have a return map of a cylinder. Its behaviour is determined by the period 
of the flow, which varies in a piecewise-affine manner within each polygon class.

%%%%%%%%%%%%%%%%%%%%%%%%%%%%%%%%%%%%%%%%%%%%%%%%%%%%%%%%%%%%%%%%%%%%%%%%%%
\medskip

In section \ref{sec:IntegrableAsymptotics} we study the return map at infinity.
For each class there is a change of coordinates which conjugates the unperturbed 
return map to a linear twist map on the cylinder.
The twist condition depends on the derivative of the period function for that class,
and as the distance from the origin increases, this function undergoes damped oscillations.
Under a suitable scaling, the period function converges to a continuous function with
discontinuous first derivative. This in turn leads to a discontinuity in the asymptotic 
behaviour of the twist map. 

More precisely, in section \ref{sec:IntegrableAsymptotics} we shall prove the following 
result, which appears in the text as two separate theorems.

\begin{thm_nonumber}[Theorem \ref{thm:Omega_e}, page \pageref{thm:Omega_e} \& 
theorem \ref{thm:Tprime_asymptotics}, page \pageref{thm:Tprime_asymptotics}]
Associated with each polygon class, indexed by $e\in\cE$, there is a change of coordinates
which conjugates the unperturbed return map to the linear twist map $T^e$, given by
\begin{equation}\label{eq:T^e}
 T^e : \X \to \X
    \hskip 40pt 
 T^e(\theta,\rho) = \left( \theta + \kappa\rho, \rho \right),
\end{equation}
where $\X$ is the unit cylinder:
 $$
\X=\bbS^1\times\R,
$$
and $\kappa=\kappa(e)$ is the twist.
Furthermore, as $e\to\infty$, the limiting behaviour of $\kappa(e)$ is singular:
\begin{displaymath}
 \kappa(e) \to \left\{ \begin{array}{ll} 4 \quad & \{\sqrt{e}\}=0 \\ 0 \quad & \mbox{otherwise.} \end{array}\right.
\end{displaymath}
\end{thm_nonumber}

%%%%%%%%%%%%%%%%%%%%%%%%%%%%%%%%%%%%%%%%%%%%%%%%%%%%%%%%%%%%%%%%%%%%%%%%%%
\medskip

Typically the twist $\kappa$ converges to zero, and hence the return map converges 
(non-uniformly) to the identity. However, for the polygon classes for which $e$ is 
a perfect squares (a thin sub-sequence of the sequence (\ref{eq:cE})), the twist 
approaches a positive value.

Finally, in section \ref{sec:PerturbedDynamics}, we explore numerically the perturbed 
dynamics at infinity, tracking exactly long orbits of $F$ using integer arithmetic. 
We study some features of the first-return map to the strip $X$. This is now a perturbed 
twist map on a lattice. The twist condition is given by the theorem above, and the 
perturbation results from the cumulative effect of a large number of strip maps. 
In the annuli where the twist $\kappa(e)$ tends to zero, the form of the perturbation 
is laid bare, and we find delicate resonance structures superimposed to
low-amplitude fluctuations (see figure \ref{fig:ResonancesCloseUp}). 
By contrast, where the twist persists the phase portrait is featureless, suggesting 
a statistical approach.

%%%%%%%%%%%%%%%%%%%%%%%%%%%%%%%%%%%%%%%%%%%%%%%%%%%%%%%% FIGURE
\begin{figure}[t] 
        \centering
        \includegraphics[scale=0.25]{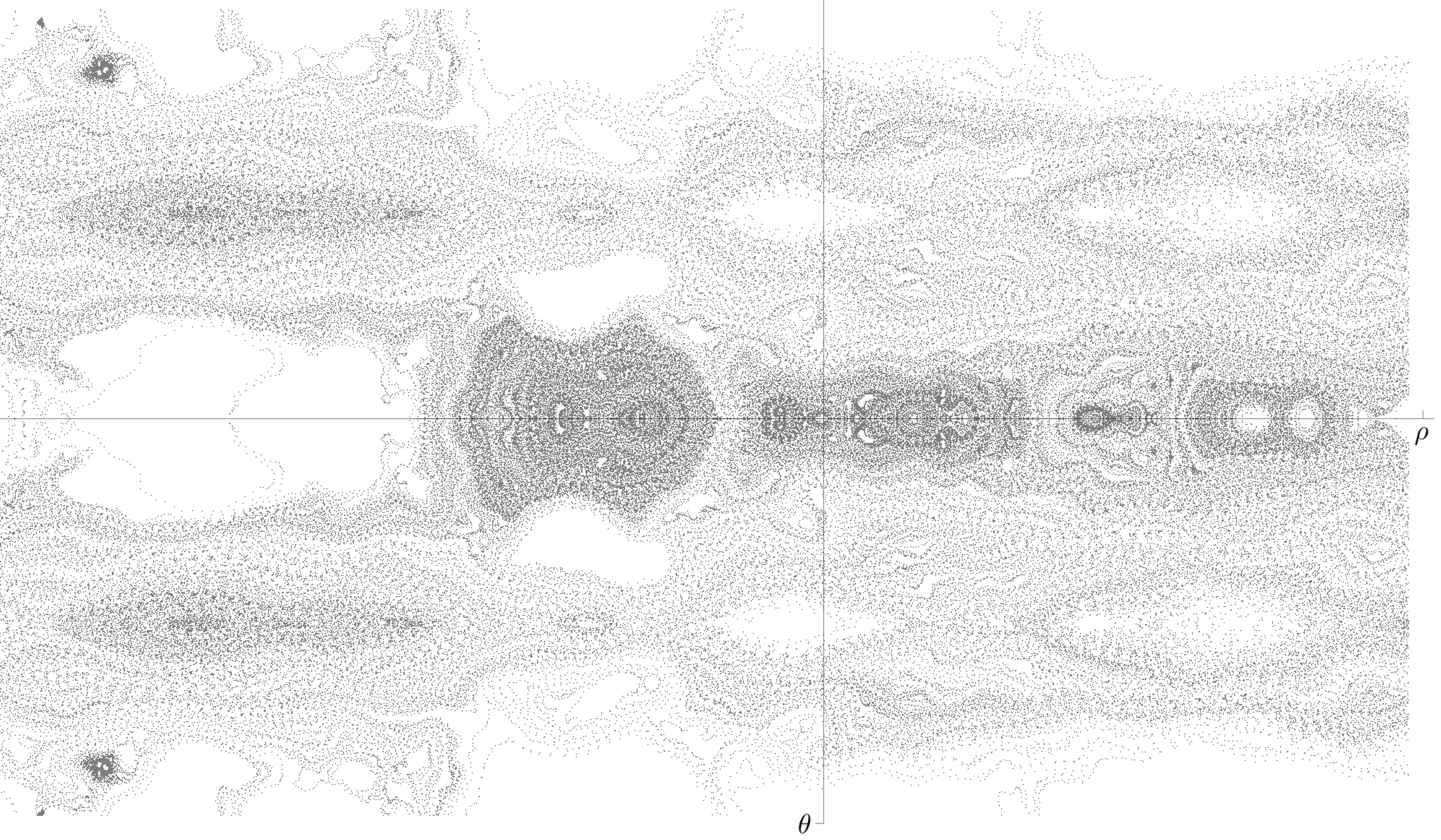} \\
        \caption{A pixel-plot of a primary resonance for $e=160234\approx400.3^2$ and $\lambda\approx 3.5\times 10^{-9}$. 
        The plot shows part of a large number of symmetric orbits of $\Phi$ in the cylindrical coordinates
        $(\theta,\rho)\in\X=\bbS^1\times\R$. 
        The resolution is such that the width of the cylinder consists of approximately 560 lattice sites.
        There are parts of the cylinder which are filled with symmetric orbits, 
        and others which are devoid of them, but no sharp boundary between the two. }
        \label{fig:ResonancesCloseUp}
\end{figure}

%%%%%%%%%%%%%%%%%%%%%%%%%%%%%%%%%%%%%%%%%%%%%%%%%%%%%%%%
 
Thus we consider the distribution of the periods of the periodic orbits.
Since our system is reversible, it is the composition of two orientation-reversing 
involutions. The probabilistic model against which to compare the dynamical data 
was developed in \cite{RobertsVivaldi09}, where it was shown that for a suitable 
scaling of the period, the expected period distribution function of a random 
reversible map on $N$ points converges pointwise as $N\rightarrow\infty$. 
The limit function is a gamma (or Erlang) distribution
\begin{equation} \label{eq:R(x)}
 \cR(x) = 1 - e^{-x}(1+x).
\end{equation}
The scaling factor depends on the fraction of the phase space occupied by the 
fixed sets of the two involutions, see \cite[Theorem A]{RobertsVivaldi09}.

Numerical experiments with various classes of maps over finite fields 
(polynomial and rational maps of affine and projective spaces and K3 surfaces), 
which have strong pseudo-random features, have exposed the ubiquity of 
(\ref{eq:R(x)}) \cite{RobertsVivaldi05,Hutz}. 
This distribution was subsequently found also for the periodic orbits of 
a zero-entropy map on lattices \cite{NeumarkerRobertsVivaldi}, suggesting that
the emergence of (\ref{eq:R(x)}) requires only mild statistical properties.

The period distribution function is well-defined in our model, because the return map 
is equivariant with respect to a group of lattice translations, and so the phase space 
is a discrete torus.
In section \ref{sec:PerturbedDynamics} we provide numerical evidence that 
the limit $\cR(x)$ is attained along the sub-sequence of polygon classes which 
correspond to perfect squares, where the nonlinearity of the return map 
persists at infinity. 
These findings suggest that a sufficiently large number of linked strip maps 
can produce a form of `pseudochaos'.

%%%%%%%%%%%%%%%%%%%%%%%%%%%%%%%%%%%%%%%%%%%%%%%%%%%%%%%%%%%%%%%%%%%%%%%%%%
\section{The integrable flow} \label{sec:IntegrableFlow}
%%%%%%%%%%%%%%%%%%%%%%%%%%%%%%%%%%%%%%%%%%%%%%%%%%%%%%%%%%%%%%%%%%%%%%%%%%

We describe the properties of the Hamiltonian $\cH$ of equation 
\eqref{eq:Hamiltonian}, which was introduced in \cite{ReeveBlackVivaldi}.

The function $\cH$ is continuous, piecewise-affine, and differentiable on 
$\R^2\setminus \Delta$, where $\Delta$ is the set of orthogonal lines given 
by 
\begin{equation}\label{eq:Delta}
\Delta=\{(x,y)\in\R^2 \, : \; (x-\fl{x})(y-\fl{y})=0\}.
\end{equation}
The corresponding Hamiltonian vector field, wherever it is defined, 
is equal to the auxiliary vector field $\bfw$ of equation \eqref{eq:w}, 
and is constant on the collection of boxes $B_{m,n}$ of equation \eqref{eq:B_mn}.
The set $\Delta$ is the boundary of the boxes $B_{m,n}$.
We denote the value of $\bfw$ on $B_{m,n}$ as
\begin{equation} \label{eq:w_mn}
 \bfw_{m,n}=(2n+1,-(2m+1)). 
\end{equation}

The orbits ---the level sets of $\cH$--- are \defn{convex polygons} \cite[Theorem 2]{ReeveBlackVivaldi},
denoted by 
\begin{equation*} %\label{eq:Pi(alpha)}
 \Pi(\alpha) = \{z\in\R^2 \, : \; \cH(z) = \alpha \} \hskip 40pt \alpha\in\R^+.
\end{equation*}
The vertices of a polygon $\Pi(\alpha)$ occur at its intersections with the set $\Delta$, 
and all polygons are invariant under the dihedral group $D_4$, 
generated by the two orientation-reversing involutions
\begin{equation}\label{eq:Dihedral}
 G:\quad (x,y) \mapsto (y,x) 
\hskip 40pt
 G':\quad (x,y) \mapsto (x,-y).
\end{equation}

There is a natural partition of the foliation $\Pi(\alpha)$ into \defn{polygon classes},
separated by \defn{critical polygons}, which contain integer lattice points.
Since $\cH(x,y)=x^2+y^2$ whenever $x$ and $y$ are integer,
a polygon $\Pi(\alpha)$ is critical precisely when $\alpha$ belongs to the set
$\cE$ of non-negative integers which are representable as the sum of two squares 
(see equation \eqref{eq:cE}), which we call \defn{critical numbers}.
The corresponding family of \defn{critical intervals}---the open intervals between 
consecutive critical numbers---is given by
\begin{equation}\label{eq:Ie}
 \cI^{e_i} = (e_i,e_{i+1})\qquad e_i\in\cE.
\end{equation}

%%%%%%%%%%%%%%%%%%%%%%%%%%%%%%%%%%%%%%%%%%%%%%%%%%%%%%%% FIGURE
\begin{figure}[t]
\centering
\includegraphics[scale=0.35]{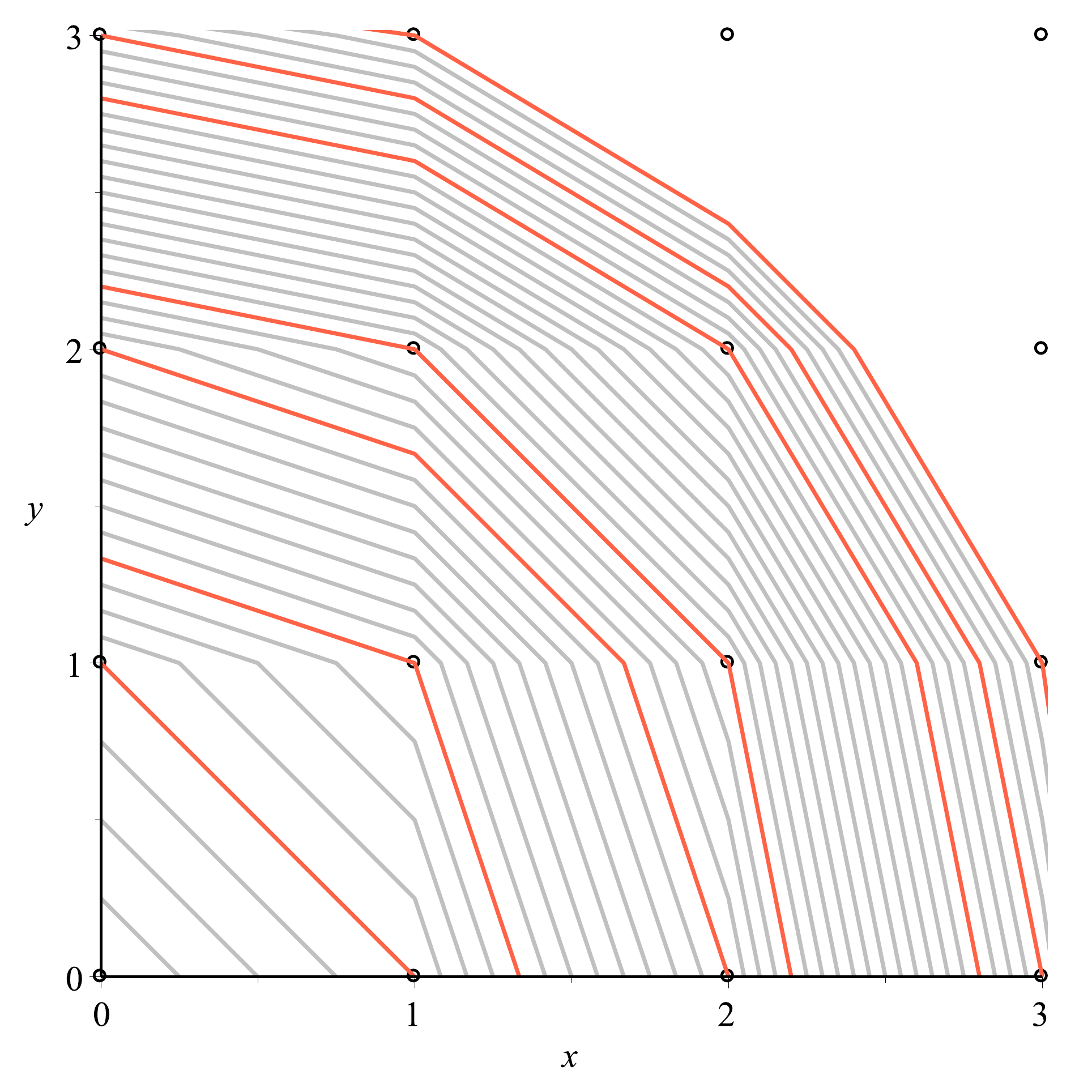}
\caption{A selection of polygons $\cH(x,y)=\alpha$ for values of $\alpha$ in the interval $[0,10]$.
The critical polygons are shown in red.
The polygon classes are the annuli bounded between pairs of adjacent critical polygons.}
\label{fig:polygon_classes}
\end{figure}
%%%%%%%%%%%%%%%%%%%%%%%%%%%%%%%%%%%%%%%%%%%%%%%%%%%%%%%%

The critical polygons act as separatrices, delineating a countable set of concentric, open, polygonal annuli,
which constitute the polygon classes (see figure \ref{fig:polygon_classes}).
We index the polygon classes using the set of critical numbers, 
and associate the critical number $e\in\cE$ with the polygon class $\cH^{-1}(\cIe)$.
All polygons in a given class have the same set of edge vectors,
i.e., intersect the same collection of boxes $B_{m,n}$.

%%%%%%%%%%%%%%%%%%%%%%%%%%%%%%%%%%%%%%%%%%%%%%%%%%%%%%%%%%%%%%%%%%%%%%%%%%
\subsection{Nonlinearity} \label{sec:Nonlinearity}
%%%%%%%%%%%%%%%%%%%%%%%%%%%%%%%%%%%%%%%%%%%%%%%%%%%%%%%%%%%%%%%%%%%%%%%%%%

The Hamiltonian $\cH$ is nonlinear, since the periods of orbits depend
on the initial conditions. We consider the \defn{period function} 
$\alpha\mapsto \cT(\alpha)$, which gives the period of the orbit $\Pi(\alpha)$
as a function of $\alpha>0$.
Adopting the short-hand notation
\begin{equation}\label{eq:ShortHand}
\langle x\rangle:=\lfloor\sqrt{x}\rfloor
\end{equation}
we have the following expression for $\cT(\alpha)$.

\begin{proposition} \label{prop:T(alpha)}
 Let $\alpha > 0$. 
Then the period $\cT(\alpha)$ of the Hamiltonian flow on the polygon $\Pi(\alpha)$ is given by
\begin{equation} \label{eq:cT(alpha)}
 \frac{\cT(\alpha)}{8} = \frac{P^{-1}(\alpha/2)}{2\flsq{\alpha/2}+1} 
    -2 \sum_{n=\flsq{\alpha/2}+1}^{\flsq{\alpha}} \frac{P^{-1}(\alpha - n^2)}{4n^2-1}
\end{equation}
(where if $\flsq{\alpha/2}=\flsq{\alpha}$ the sum should be understood to be empty).
\end{proposition}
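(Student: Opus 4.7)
The plan is to exploit the $D_4$ symmetry of $\Pi(\alpha)$ to reduce the period to the traversal time of a one-eighth arc, and then to compute that time by slicing the arc into vertical strips on which the second component of $\bfw$ is constant. Concretely, I take $\gamma$ to be the arc in the first quadrant running from $p_0 := (x_0,x_0)$ on the diagonal $y=x$ to $p_1 := (x_1,0)$ on the positive $x$-axis. Throughout the open first quadrant $\bfw$ has positive first and negative second component, so the flow traverses $\gamma$ forward and the elapsed time equals $\cT(\alpha)/8$. Reading $\cH(p_0) = 2P(x_0)=\alpha$ and $\cH(p_1)=P(x_1)=\alpha$ identifies $x_0 = P^{-1}(\alpha/2)$ and $x_1 = P^{-1}(\alpha)$, whence $\fl{x_0} = \flsq{\alpha/2} =: m^*$ and $\fl{x_1}=\flsq{\alpha}=:M$.

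The key observation is that on each vertical strip $S_m := \{(x,y) : m \leqslant x < m+1\}$ the second component of $\bfw$ equals $-(2m+1)$ uniformly (it is insensitive to $\fl{y}$), so the time $\gamma$ spends in $S_m$ is its total $y$-drop divided by $2m+1$, independently of any intermediate crossings of horizontal lines. The arc visits the strips $m = m^*, m^*+1, \ldots, M$ with $y$ monotonically decreasing. On a strictly interior strip $m^* < m < M$, the arc enters at $x=m$ and exits at $x=m+1$, with the corresponding $y$-values obtained by solving $P(y)=\alpha-m^2$ and $P(y)=\alpha-(m+1)^2$ respectively; writing $q_m := P^{-1}(\alpha - m^2)$, the drop is $q_m - q_{m+1}$. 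The extremal strips need individual treatment: $\gamma$ starts at the interior point $p_0$ of $S_{m^*}$, giving a drop of $x_0 - q_{m^*+1}$, and terminates at the interior point $p_1$ of $S_M$, giving a drop of $q_M$. Assuming $m^* < M$, summing the contributions yields
\begin{equation*}
 \frac{\cT(\alpha)}{8} \;=\; \frac{x_0 - q_{m^*+1}}{2m^*+1} \;+\; \sum_{m=m^*+1}^{M-1}\frac{q_m - q_{m+1}}{2m+1} \;+\; \frac{q_M}{2M+1}.
\end{equation*}

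A single application of Abel summation finishes the job: splitting the differences and re-indexing the second sum by $n=m+1$, each $q_n$ with $m^*+1 \leqslant n \leqslant M$ ends up with coefficient $\tfrac{1}{2n+1} - \tfrac{1}{2n-1} = -\tfrac{2}{4n^2-1}$, while the term $x_0/(2m^*+1)$ survives unpaired. This rearranges the displayed expression into the right-hand side of \eqref{eq:cT(alpha)}, using $(2n-1)(2n+1)=4n^2-1$. The degenerate case $m^*=M$, in which the middle sum above is empty, is immediate by inspection: the whole of $\gamma$ lies in the single strip $S_{m^*}$ and contributes $x_0/(2m^*+1)$, matching the formula with the empty-sum convention.

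I expect the main obstacle to be the bookkeeping at the endpoints of $\gamma$: the arc enters and leaves at interior points of boxes rather than at strip boundaries, and the Abel telescoping must correctly absorb these asymmetric contributions alongside the interior terms into the canonical form. Once this is set up, the remainder of the argument is a direct computation, powered by the elementary but crucial fact that on a vertical strip $\dot y$ depends only on $\fl{x}$.
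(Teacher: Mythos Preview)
Your proposal is correct and follows essentially the same argument as the paper: both reduce to the one-eighth arc from the diagonal to the $x$-axis, slice by vertical strips $m^*\leqslant m\leqslant M$ where $\dot y=-(2m+1)$, express each strip's time as its $y$-drop over $2m+1$, and then telescope via $\tfrac{1}{2n+1}-\tfrac{1}{2n-1}=-\tfrac{2}{4n^2-1}$. The endpoint bookkeeping and the degenerate case $m^*=M$ are handled identically.
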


Note that the function $P$ of equation \eqref{eq:P} can only be inverted up to sign, 
so we write $P^{-1}$ to denote the function
\begin{equation}\label{def:Pinv}
 P^{-1}: \R_{\geqslant 0} \to \R_{\geqslant 0} 
\hskip 40pt 
x \mapsto \frac{x + \flsq{x} (1+\flsq{x})}{2\flsq{x}+1},
\end{equation}
which satisfies $(P^{-1}\circ P)(x) = |x|$.

\begin{proof}
Take $\alpha>0$. By the eight-fold symmetry of the level sets of $\cH$ (see equation \eqref{eq:Dihedral}),
it suffices to consider the intersection of the polygon $\Pi(\alpha)$ with the first octant. 
The point $(y,y)\in\Pi(\alpha)$ where the polygon intersects the positive half of the symmetry line $\Fix{G}$
satisfies 
 $$ y = P^{-1}(\alpha/2), $$
whereas the point $(x,0)\in\Pi(\alpha)$ where the polygon intersects the positive $x$-axis satisfies 
 $$ x = P^{-1}(\alpha). $$
We consider the time taken to flow between these two points.

From the piecewise-affine form of the function $P$, it is straightforward to show that
\begin{equation} \label{eq:flPinv}
 \flsq{x} = \fl{P^{-1}(x)}  \hskip 40pt  x\in\R^+,
\end{equation}
so that by the definition of $x$ and $y$:
 $$ \fl{y}=\flsq{\alpha/2} \hskip 40pt \fl{x}=\flsq{\alpha}. $$
To proceed, we partition the $y$-distance between the two points $(y,y)$ and $(x,0)$ into a sequence of distances $d(n)$,
and corresponding flow-times $t(n)$, for $n=\flsq{\alpha/2},\dots,\flsq{\alpha}$. 
If $\flsq{\alpha}=\flsq{\alpha/2}$ the partition consists of a single element: 
we simply let $d(\flsq{\alpha/2}) = P^{-1}(\alpha/2)$, and $t(\flsq{\alpha/2})$ is the time 
taken for the flow on $\Pi(\alpha)$ to move between the symmetry lines $x=y$ 
and $y=0$.

Suppose now that $\flsq{\alpha/2}<\flsq{\alpha}$.
Note that the $y$-coordinate of the vertex of $\Pi(\alpha)$ which lies in the first octant and has $x$-coordinate $x=n$ is given by
 $$ P^{-1}(\alpha - n^2). $$
(Such vertices do not exist if $\flsq{\alpha}=\flsq{\alpha/2}$.)
Then we let $d(\flsq{\alpha})$ denote the $y$-distance between the points where $\Pi(\alpha)$ 
intersects the symmetry line $x=y$ and the line $x=\flsq{\alpha/2}+1$:
 $$ d(\flsq{\alpha/2}) = P^{-1}(\alpha/2) - P^{-1}(\alpha - (\flsq{\alpha/2}+1)^2), $$
and $d(\flsq{\alpha})$ denote the $y$-distance between the points where $\Pi(\alpha)$ 
intersects the line $x=\flsq{\alpha}$ and the symmetry line $y=0$:
 $$ d(\flsq{\alpha}) = P^{-1}(\alpha - \flsq{\alpha}^2). $$
For any $n$ with $\flsq{\alpha/2}+1\leqslant n \leqslant \flsq{\alpha}-1$, 
$d(n)$ is the $y$-distance between the points where $\Pi(\alpha)$ 
intersects the lines $x=n$ and $x=n+1$:
 $$ d(n) = P^{-1}(\alpha - n^2) -P^{-1}(\alpha - (n+1)^2). $$
Similarly, $t(\flsq{\alpha/2})$ is the time taken for the flow on $\Pi(\alpha)$ 
to move between the symmetry line $x=y$ and $x=\flsq{\alpha/2}+1$, $t(\flsq{\alpha/2})$ 
is the time taken to move between the line $x=\flsq{\alpha}$ and the symmetry 
line $y=0$, and for $\flsq{\alpha/2}+1\leqslant n \leqslant \flsq{\alpha}-1$, $t(n)$ is the time 
taken to flow between the lines $x=n$ and $x=n+1$.

With this notation, and using the symmetry of the flow (\ref{eq:Dihedral}), 
the period $\cT(\alpha)$ satisfies
\begin{equation} \label{eq:T(r)_sum_t(n)}
 \frac{\cT(\alpha)}{8} = \sum_{n=\flsq{\alpha/2}}^{\flsq{\alpha}} t(n).
\end{equation}

For any $n\geqslant 0$, the auxiliary vector field has constant $y$-component 
between the lines $x=n$ and $x=n+1$, given by $-(2n+1)$ (see equation (\ref{eq:w})).
Hence the times $t(n)$ and the distances $d(n)$ are related by
\begin{equation} \label{eq:t(n)}
 t(n) = \frac{d(n)}{2n+1} \hskip 40pt \flsq{\alpha/2}\leqslant n\leqslant \flsq{\alpha}.
\end{equation}

If $\flsq{\alpha/2}=\flsq{\alpha}$ then the result follows from the definition of $d(\flsq{\alpha/2})$. 
If $\flsq{\alpha/2}<\flsq{\alpha}$, substituting (\ref{eq:t(n)}) and the definition of the $d(n)$ into (\ref{eq:T(r)_sum_t(n)}) gives:
\begin{align*}
 \frac{\cT(\alpha)}{8} &= \sum_{n=\flsq{\alpha/2}}^{\flsq{\alpha}} \frac{d(n)}{2n+1} \\
 &= \frac{P^{-1}(\alpha/2)}{2\flsq{\alpha/2}+1} + \sum_{n=\flsq{\alpha/2}+1}^{\flsq{\alpha}} \frac{P^{-1}(\alpha - n^2)}{2n+1}
 - \sum_{n=\flsq{\alpha/2}}^{\flsq{\alpha}-1} \frac{P^{-1}(\alpha - (n+1)^2)}{2n+1} \\
&= \frac{P^{-1}(\alpha/2)}{2\flsq{\alpha/2}+1} -2 \sum_{n=\flsq{\alpha/2}+1}^{\flsq{\alpha}} \frac{P^{-1}(\alpha - n^2)}{4n^2-1},
\end{align*}
as required.
\end{proof}

\medskip

Now we consider the derivative of the period function $\cT$.
If the derivative is non-zero, then the flow is nonlinear.

\begin{proposition} \label{prop:T'(alpha)}
Let $e\in\cE$ be a critical number and let $\alpha\in\cIe$ be some value in the critical interval associated with $e$.
Then the derivative $\cT^{\prime}(\alpha)$ of the period of the Hamiltonian flow on the polygon $\Pi(\alpha)$ is given by
\begin{equation} \label{eq:Tprime(alpha)}
 \frac{\cT^{\prime}(\alpha)}{4} = \frac{1}{(2\vI+1)^2} -4 \sum_{n=\vI+1}^{\vk} \frac{1}{(4n^2-1)(2\flsq{e - n^2}+1)},
\end{equation}
(where if $\vI=\vk$ the sum should be understood to be empty).
Thus the derivative $\cT^{\prime}(\alpha)$ is piecewise constant on the critical intervals $\cIe$.
\end{proposition}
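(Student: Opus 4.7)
The plan is to differentiate the expression for $\cT(\alpha)$ given in Proposition \ref{prop:T(alpha)} term by term, after showing that on a critical interval $\cI^e$ all the discrete quantities appearing in that formula are constant.

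First I would verify that the summation range is rigid on $\cI^e$. Since a change in $\flsq{\alpha}=\lfloor\sqrt{\alpha}\rfloor$ would require $\alpha$ to cross a perfect square, and perfect squares lie in $\cE$, the quantity $\flsq{\alpha}$ is constant on $\cI^e$ and equals $\vk=\flsq{e}$. Analogously, a change in $\flsq{\alpha/2}$ would force $\alpha$ to cross $2k^2=k^2+k^2\in\cE$, so $\flsq{\alpha/2}$ is constant and equals $\vI=\flsq{e/2}$.

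The crucial step is the same observation applied to each summand: for $\vI+1\leqslant n\leqslant\vk$, a change in $\flsq{\alpha-n^2}$ would require $\alpha-n^2$ to cross some $m^2$, i.e. $\alpha$ to cross $n^2+m^2\in\cE$, again impossible on $\cI^e$. Hence $\flsq{\alpha-n^2}=\flsq{e-n^2}$ throughout $\cI^e$. This is the one point that genuinely uses the sum-of-two-squares characterization of $\cE$, and I would state it as a short preliminary lemma; it is the only real obstacle, and it is a straightforward contradiction argument.

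Given these rigidities, I would use the definition \eqref{def:Pinv} to observe that whenever $\flsq{x}$ is locally constant, $P^{-1}$ is affine in $x$ with slope $1/(2\flsq{x}+1)$. Therefore on $\cI^e$,
\begin{equation*}
\frac{d}{d\alpha}P^{-1}(\alpha/2)=\frac{1}{2(2\vI+1)},\qquad
\frac{d}{d\alpha}P^{-1}(\alpha-n^2)=\frac{1}{2\flsq{e-n^2}+1}.
\end{equation*}

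Finally I would differentiate \eqref{eq:cT(alpha)} term by term using these identities. The prefactor $1/(2\vI+1)$ of the boundary term combines with the derivative above to produce $1/(2\vI+1)^2$, and each summand contributes $2/[(4n^2-1)(2\flsq{e-n^2}+1)]$. Multiplying through by $8$ and regrouping gives exactly \eqref{eq:Tprime(alpha)}. Since the right-hand side depends only on $e$ (and not on $\alpha$), the piecewise-constancy of $\cT'$ on each $\cIe$ follows immediately.
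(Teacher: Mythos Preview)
Your proposal is correct and follows essentially the same route as the paper: both arguments differentiate the formula \eqref{eq:cT(alpha)} term by term after noting that $\flsq{\alpha}$, $\flsq{\alpha/2}$, and each $\flsq{\alpha-n^2}$ are constant on $\cI^e$ because any jump would force $\alpha$ to cross a sum of two squares. One tiny slip in your write-up: the boundary term in $\cT'(\alpha)/8$ is $\dfrac{1}{2(2\vI+1)^2}$, not $\dfrac{1}{(2\vI+1)^2}$ (the chain-rule factor $1/2$ from $\alpha/2$); the extra $2$ is absorbed when you pass from $\cT'/8$ to $\cT'/4$, so the final formula is unaffected.
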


\begin{proof}
We differentiate the expression \eqref{eq:cT(alpha)} of proposition \ref{prop:T(alpha)} with respect to $\alpha$.
The function $P^{-1}(x)$ of \eqref{def:Pinv} is differentiable at every $x$ which is not a perfect square, with
 $$ \frac{dP^{-1}(x)}{dx} = \frac{1}{2\flsq{x}+1} \hskip 40pt \sqrt{x} \notin\Z. $$
Letting $x=\alpha - n^2$, we have that the derivative exists whenever $\alpha$ cannot be expressed as a sum of squares:
 $$ \frac{dP^{-1}(\alpha - n^2)}{dx} = \frac{1}{2\flsq{\alpha - n^2}+1} \hskip 40pt \alpha\notin\cE. $$
Furthermore, the floor in the denominator of the above is constant between two consecutive sums of squares:
 $$ \flsq{\alpha - n^2} = \flsq{e - n^2} \hskip 40pt \alpha\in\cIe, $$
so that the derivative is constant on each of the critical intervals $\cIe$.
Similarly the bounds of the sum in \eqref{eq:cT(alpha)} are a function of $e$ only:
 $$ \flsq{\alpha/2}=\vI \hskip 40pt \flsq{\alpha}=\vk \hskip 40pt \alpha\in\cIe. $$
Thus if $\alpha\in\cIe$ is non-critical, then $\cT(\alpha)$ is differentiable and
\begin{equation*} %\label{eq:Tprime(alpha)}
 \frac{\cT^{\prime}(\alpha)}{4} = \frac{1}{(2\vI+1)^2} -4 \sum_{n=\vI+1}^{\vk} \frac{1}{(4n^2-1)(2\flsq{e - n^2}+1)},
\end{equation*}
as required.
\end{proof}

Note that the derivative $\cT^{\prime}(\alpha)$ is not defined at the critical numbers.
In what follows, we write $\cT^{\prime}(e)$ to denote the value of $\cT^{\prime}(\alpha)$ on $\cIe$.

%%%%%%%%%%%%%%%%%%%%%%%%%%%%%%%%%%%%%%%%%%%%%%%%%%%%%%%%%%%%%%%%%%%%%%%%%%
\section{The integrable asymptotics} \label{sec:IntegrableAsymptotics}
%%%%%%%%%%%%%%%%%%%%%%%%%%%%%%%%%%%%%%%%%%%%%%%%%%%%%%%%%%%%%%%%%%%%%%%%%%

In this section we define a time-advance map $\cF$ of the flow of the hamiltonian $\cH$. 
Here $\lambda$ is a scaling parameter which controls the speed of the flow; it will be
used to match the integrable orbits with those of the rescaled discretised rotation $\F$ 
(\ref{eq:F_lambda}).
The map $\cF$ has a natural recurrence time, and we consider its first return to a 
Poincar\'{e} section aligned along the symmetry line $x=y$.
We find that, under a suitable change of coordinates, the first return map
within each polygon class is a linear twist map on a cylinder.

Then we consider the behaviour of the Hamiltonian system at infinity, 
where the index $e\in\cE$ diverges.  
In this limit the number of discontinuities experienced by an orbit 
(i.e., the number of vertices of the polygons) is unbounded, 
whilst the magnitude of these discontinuities becomes arbitrarily small.
We find that the limiting behaviour is a dichotomy. 
Typically the limiting flow is linear, like the underlying rigid rotation: 
however, for a certain subsequence of values of $e$ the nonlinearity persists.

%%%%%%%%%%%%%%%%%%%%%%%%%%%%%%%%%%%%%%%%%%%%%%%%%%%%%%%%%%%%%%%%%%%%%%%%%%
\subsection{The unperturbed return map} \label{sec:UnperturbedReturnMap}
%%%%%%%%%%%%%%%%%%%%%%%%%%%%%%%%%%%%%%%%%%%%%%%%%%%%%%%%%%%%%%%%%%%%%%%%%%

Let $\cF$ be the following map:
\begin{equation} \label{eq:cF}
 \cF : \R^2 \rightarrow \R^2  \hskip 40pt \cF(z) = \varphi_{(\lambda - \cT(z))/4}(z),
\end{equation}
where $\varphi_t$ is the time-$t$ advance map of the flow, and where we have 
overloaded the notation $\cT$ by writing $\cT(z)$ to denote the period
of the point $z\in\R^2$.
For small $\lambda$, the fourth iterates of $\cF$ produce a small increment of the flow:
$$ 
\cF^4(z) = \varphi_{\lambda - \cT(z)}(z) = \phil(z) 
$$
where points of the form $\varphi_{\pm\lambda/2}(x,x)$ are identified
since they are connected by the action of $\varphi_\lambda$.

We define a first-return map for $\cF$.
For the Poincar\'{e} section $\cX$ we choose the set of points in the 
first quadrant which are closer to $\Fix{G}$ than their preimages under $\phil$, 
and at least as close as their images:
\begin{equation} \label{eq:cX}
 \cX = \{ \varphi_{\lambda\theta}(x,x) \, : \; x\geqslant 0, \; \theta\in[-1/2,1/2) \}.
\end{equation}

Since the period function $\cT(\alpha)$ of the flow is piecewise-affine on the critical intervals $\cIe$,
we expect the first return map of $\cF$ to be affine on each of the polygon classes.
We isolate the behaviour corresponding to each polygon class by defining a 
sequence of sub-domains $\cX^e\subset\cX$, for $e\in\cE$, 
whose union has full measure in $\cX$ as $\lambda\to 0$.

The polygon class associated with the critical number $e\in\cE$ meets the positive half symmetry line
in the box $B_{m,m}$, where $m=\vI$, so that the local component of the vector field $\bfw$ is $\bfw_{m,m}$.
We call a point $z\in\cX$ with $\cH(z)\in\cIe$ \defn{regular} with respect to the flow if
\begin{equation} \label{eq:regular_pts}
 \phil(z) = z + \lambda\bfw(z) = z + \lambda\bfw_{m,m}.
\end{equation}
Then the family of sets $\cX^e$ are given by:
\begin{equation} \label{eq:cXe}
 \cX^e = \{ z\in\cX \, : \; \cH(z) \in \Ie(\lambda) \} \hskip 40pt e\in\cE,
\end{equation}
where $\Ie(\lambda)\subset \cIe$ is the largest interval such that all points in $\cX^e$ are regular.
Since all points which are not regular must lie in an $O(\lambda)$ neighbourhood of the boundary of $B_{m,m}$,
it is straightforward to show that $|\Ie(\lambda)|\to|\cIe|$ as $\lambda\to 0$.

%%%%%%%%%%%%%%%%%%%%%%%%%%%%%%%%%%%%%%%%%%%%%%%%%%%%%%%%%%%%%%%%%%%%%%%%%%
\subsection{A change of coordinates} \label{sec:ChangeOfCoordinates}
%%%%%%%%%%%%%%%%%%%%%%%%%%%%%%%%%%%%%%%%%%%%%%%%%%%%%%%%%%%%%%%%%%%%%%%%%%

We introduce canonical coordinates for the return map on each polygonal 
class, which will facilitate the comparison between classes.
We let $\X$ denote the unit cylinder:
\begin{equation}\label{eq:bbX}
\X=\bbS^1\times\R.
\end{equation}
Then, for $e\in\cE$, we define the two-parameter family of maps:
 $$ \eta^e_\lambda: \cX^e \rightarrow \X \hskip 40pt e\in\cE, \; \lambda>0. $$
(We shall often omit the $\lambda$-dependence in the notation.)
The map $\eta^e_\lambda$ is a change of coordinates $z=(x,y)\mapsto (\theta,\rho)$, with
\begin{equation} \label{eq:rho_theta}
 \theta(z) = \frac{1}{\lambda} \, \frac{x-y}{2(2\vI+1)} \hskip 20pt \rho(z) = \frac{1}{\lambda} \, \frac{x+y-2x_0}{2(2\vI+1)},
\end{equation}
where $z_0=(x_0,x_0)\in\cX^e$ is some fixed point of the return map lying on $\Fix{G}$. 
(Below, in the proof of theorem \ref{thm:Omega_e}, we show that such a fixed point 
is guaranteed to exist for all sufficiently small $\lambda$.)
This change of coordinates is the composition of several transformations: 
a rotation through an angle $\pi/4$, which maps the symmetry line $\Fix{G}$ onto the $\rho$-axis; 
a rescaling of the plane by a factor of $1/\lambda\sqrt{2}(2\vI+1)$, which normalises the range of the coordinate $\theta$; 
a translation, which sets the preimage of the origin $(\theta,\rho)=(0,0)$ at the fixed point $z_0$.

For $z,\varphi_{\lambda t}(z)\in\cX^e$, the flow acts as:
 $$ \varphi_{\lambda t}(z) = z + \lambda t\bfw(z), $$
where $\bfw(z)$ is perpendicular to $\Fix{G}$ (see equation \eqref{eq:regular_pts}).
Thus the coordinate $\theta$ is parallel to the direction of flow, and $\rho$ is perpendicular to it:
\begin{equation*} %\label{eq:phi(theta,rho)}
 \varphi_{\lambda t}(\theta, \rho) = (\theta + t, \rho).
\end{equation*}
(We use the symbol $\varphi$ to denote the flow in both coordinate spaces.)
If we identify the interval $[-1/2,1/2)$ with the unit circle $\bbS^1$, 
then we see that the coordinate $\theta$ plays the same role as in the expression \eqref{eq:cX} for $\cX$.

In the following proposition, we show that under this change of coordinates, the unperturbed return 
map becomes a linear twist map.

\begin{theorem} \label{thm:Omega_e}
For $e\in\cE$, let $T^e$ be the map
\begin{equation} \label{eq:Omega^e}
 T^e : \mathbb{X}\rightarrow \mathbb{X}
    \hskip 40pt 
 T^e(\theta,\rho) = \left( \theta - \kappa \rho,\, \rho \right).
\end{equation}
where $\mathbb{X}$ was defined in (\ref{eq:bbX}),
\begin{equation} \label{eq:kappa}
\kappa(e) = -\frac{1}{2} (2\vI+1)^2 \cT^{\prime}(e),
\end{equation}
and $\cT$ is the period function given by (\ref{eq:cT(alpha)}).
Then for any sufficiently small $\lambda>0$, the following diagram commutes:
$$
\def\mapright#1{\smash{\mathop{\longrightarrow}\limits^{#1}}}
\def\mapdown#1{\Big\downarrow\rlap{$\vcenter{\hbox{$\scriptstyle#1$}}$}}
\begin{matrix}
\cX^e&\mapright{\cF^t}&\cX^e\cr
\mapdown{\eta}&&\mapdown{\eta}\cr
\mathbb{X}&\mapright{T^e}&\mathbb{X}\cr
\end{matrix}
$$
where $t$ is the first-return time to $\cX^e$.
\end{theorem}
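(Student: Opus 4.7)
The plan is to iterate $\cF$ along the Hamiltonian flow. On every polygon $\Pi(\alpha)$ with $\alpha \in \cIe$, the flow is periodic with period $\cT(\alpha)$ given by proposition \ref{prop:T(alpha)}, and $\cF$ restricts there to a pure translation in flow time by $\omega := (\lambda - \cT)/4$. Parametrising $z \in \cX^e$ as $z = \varphi_{\lambda\theta_0}(x_*, x_*)$, where $(x_*, x_*)$ is the unique point of $\Pi(\cH(z)) \cap \Fix G$ with $x_* \geq 0$ and $\theta_0 \in [-1/2, 1/2)$, one obtains $\cF^t(z) = \varphi_{\lambda\theta_0 + t\omega}(x_*, x_*)$, so that $\cH$ and hence $\rho$ are conserved and only the flow time, taken modulo $\cT$, needs to be tracked.

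To identify the first-return time, I write $t = 4k+j$ with $j \in \{0,1,2,3\}$ and observe that $t\omega \equiv t\lambda/4 - j\cT/4 \pmod{\cT}$. Setting $\mu := \cT/(4\lambda)$, a direct case analysis over $j$ shows that, for all sufficiently small $\lambda$, the smallest positive $t$ for which $\lambda\theta_0 + t\omega \in [-\lambda/2, \lambda/2) \pmod{\cT}$ is $t^* = 4k^* + 1$, where $k^*$ is the unique integer in the unit interval $[\mu - 3/4 - \theta_0,\, \mu + 1/4 - \theta_0)$; the cases $j = 2, 3, 0$ give strictly larger returns of order $2\cT/\lambda$, $3\cT/\lambda$ and $4\cT/\lambda$ respectively. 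Substituting $t^*$ into the formula for $\cF^t(z)$ and dividing by $\lambda$ yields
\begin{equation*}
 \theta_{\mathrm{new}} \;\equiv\; \theta_0 - \mu + 1/4 \pmod{1},
\end{equation*}
with the representative chosen in $[-1/2, 1/2)$.

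Next I convert $\mu$ into an affine function of $\rho$. Since the $\Fix G$-arc of $\cX^e$ lies in the box $B_{m,m}$ with $m = \vI$, the piecewise-affine form of $P$ gives $\cH(z) = 2P(x_*)$ with $x_* = x_0 + \lambda(2m+1)\rho$, whence $\cH(z) = \alpha_0 + 2\lambda(2m+1)^2\rho$ where $\alpha_0 := \cH(z_0)$. Combining this with proposition \ref{prop:T'(alpha)}, which makes $\cT$ affine on $\cIe$ with slope $\cT'(e)$, yields $\mu = \mu_0 + (2m+1)^2\cT'(e)\rho/2$. Substituting into the displayed formula and invoking the definition \eqref{eq:kappa} of $\kappa$ reproduces the linear twist \eqref{eq:Omega^e}, up to an additive constant $1/4 - \mu_0 \pmod{1}$ that must be absorbed by the choice of base point.

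The final task is to produce $z_0 = (x_0, x_0) \in \cX^e \cap \Fix G$ annihilating this constant, i.e., to choose $x_0$ so that $\mu_0 \equiv 1/4 \pmod{1}$, equivalently $\cT(\cH(z_0)) = (4n+1)\lambda$ for some non-negative integer $n$. If $\cT'(e) = 0$ then $\kappa = 0$, the map $T^e$ is the identity and any base point works. Otherwise, $x \mapsto \cT(2P(x))$ is strictly monotone and affine on the $\Fix G$-arc of $\cX^e$ with range of length bounded below independently of $\lambda$, so for all sufficiently small $\lambda$ this range contains values of the required form, and the intermediate-value theorem produces the desired $x_0$. I expect the main obstacle to be the uniformity of the case analysis giving $t^* = 4k^* + 1$: one must verify that the minimum over $j$ is attained at $j = 1$ simultaneously for every $z \in \cX^e$, with a $\lambda$-threshold depending only on the critical number $e$.
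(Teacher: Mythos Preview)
Your argument is correct and mirrors the paper's proof almost step for step: parametrise by flow time from $\Fix G$, find the first return via a residue-mod-$4$ case analysis (the paper writes $k=4l+r$ where you write $t=4k+j$, and likewise asserts the minimum occurs at residue $1$), obtain $\theta'\equiv\theta+1/4-\cT(z)/(4\lambda)\pmod 1$, expand $\cT$ affinely in $\rho$ via the piecewise-linear structure of $\cH$ on the box $B_{m,m}$, and absorb the additive constant by the choice of base point $z_0$.

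One small caveat: your treatment of the degenerate case $\cT'(e)=0$ is not right. If $\cT$ is constant on $\cIe$ then the return map on the cylinder is the rigid rotation $\theta\mapsto\theta+1/4-\cT/(4\lambda)$, which coincides with the identity $T^e$ only for the special values of $\lambda$ satisfying $\cT\equiv\lambda\pmod{4\lambda}$; no choice of base point can fix this, so ``any base point works'' is false. The paper does not handle this case either---it simply inserts the proviso ``provided that $\cT'(e)$ is non-zero'' when constructing $z_0$---so you should do the same rather than claim the degenerate case is covered.
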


\begin{proof}
For $e\in\cE$, pick $z\in\cX^e$ and let $(\theta,\rho)=\eta^e(z)$. 
Furthermore, suppose that $t$ is the return time of $z$ to $\cX^e$, and let $z^{\prime}=\cF^t(z)$.
We begin by finding an expression for the return time $t$.

If $z=\varphi_{\lambda\theta}(x,x)$ for some $x\geqslant 0$, then by the definition (\ref{eq:cF}) of $\cF$ we have
\begin{displaymath}
 \cF^k(z) = \varphi_{\lambda\theta + k(\lambda - \cT(z))/4} (x,x) \hskip 40pt k\in\Z.
\end{displaymath}
Thus, by the expression (\ref{eq:cX}) for $\cX$, the $k$th iterate of $z$ under $\cF$ lies in the set $\cX$ if and only if
\begin{displaymath}
 \theta + k \left(\frac{\lambda - \cT(z)}{4\lambda}\right) + \frac{m\cT(z)}{\lambda} \in [-1/2,1/2)
\end{displaymath}
for some $m\in\Z$. The return time $t$ is the minimal $k\in\N$ for which this inclusion holds. 
Writing $k=4l+r$ for $l\in\Z_{\geqslant 0}$ and $0\leqslant r\leqslant 3$, it is straightforward to see that 
$k$ is minimal when $r=1$, $m=l$, and $l$ satisfies
\begin{displaymath}
 \theta + l - \frac{ \cT(z)}{4\lambda} \in [-3/4,1/4),
\end{displaymath}
i.e., when
\begin{displaymath}
 l + \Bfl{\theta + \frac{3}{4} - \frac{\cT(z)}{4\lambda}} =0.
\end{displaymath}
Thus the return time is given by
\begin{equation} \label{eq:tau(z)}
  t = 4\Bceil{ \frac{\cT(z)}{4\lambda} - \theta - \frac{3}{4} } + 1,
\end{equation}
where we have used the relation $-\fl{x}=\ceil{-x}$.

Now if $z^{\prime}=\varphi_{\lambda\theta^{\prime}}(x,x)$, where $\theta^{\prime}\in[-1/2,1/2)$, 
then by construction, we have
 $$ \lambda\theta^{\prime} \equiv \lambda\theta + t\left(\frac{\lambda - \cT(z)}{4}\right) \mod{\cT(z)}, $$
where we write $a \equiv b ~ (\mathrm{mod} ~ c)$ for real $c$ to denote that $(a-b)\in c\,\Z$. Thus it follows from the formula (\ref{eq:tau(z)}) for the return time that
\begin{align}
 \lambda\theta^{\prime} 
  &\equiv \lambda\theta + \lambda \Bceil{ \frac{\cT(z)}{4\lambda}-\frac{3}{4}-\theta } + \frac{\lambda - \cT(z)}{4} \mod{\cT(z)} \nonumber \\
  &\equiv -\lambda \Bfl{ \theta+\frac{3}{4}-\frac{\cT(z)}{4\lambda} } + \lambda\left( \theta+\frac{3}{4}-\frac{\cT(z)}{4\lambda}\right) - \frac{\lambda}{2} \mod{\cT(z)} \nonumber \\
  &\equiv \lambda \left\{ \theta+\frac{3}{4}-\frac{\cT(z)}{4\lambda} \right\} - \frac{\lambda}{2}  \mod{\cT(z)}, \label{eq:theta_prime}
\end{align}
where $\{x\}$ denotes the fractional part of $x$.
Since the expression on the right hand side of \eqref{eq:theta_prime} lies in the interval $[-\lambda/2,\lambda/2)$,
equation \eqref{eq:theta_prime} must in fact be an equality.
Thus, dividing through by $\lambda$, we obtain
$$ 
\theta^{\prime} \equiv \theta+\frac{1}{4}-\frac{\cT(z)}{4\lambda} \mod{1}. 
$$

We wish to apply the change of coordinates $\eta^e$.
The fixed point $z_0$ used in the definition of $\eta^e$ may be any $z\in\cX^e$ satisfying
$\theta=\theta^{\prime}=0$, i.e., $z_0\in\Fix{G}$ and
\begin{equation} \label{eq:z_0}
 0 \equiv \frac{1}{4}-\frac{\cT(z_0)}{4\lambda} \mod{1}.
\end{equation}
Provided that $\cT^{\prime}(e)$ is non-zero, we can always choose such a $z_0$ as long as
$\lambda$ is sufficiently small.
Then, in the new coordinates, we have
 $$ \eta^e (z^{\prime}) = \left( \theta + \frac{1}{4} - \frac{\cT(z)}{4\lambda} , \rho \right), $$
where we have used the fact that the coordinate $\theta$ is periodic.
 
Now let $z=(x,y)$ and $z_0=(x_0,x_0)$. 
Since the Hamiltonian function $\cH$ is affine on $\cX^e$, expanding $\cH$ about $z_0$ gives
\begin{align}
 \cH(z) &= \cH(z_0) + (x+y-2x_0)(2\vI+1) \nonumber \\
 &= \cH(z_0) + 2\lambda(2\vI+1)^2\rho. \label{eq:cP(z)_rho_expansion}
\end{align}
Similarly, since $\cT$ is affine on the interval $\cIe$:
 $$ \cT(z) = \cT(z_0) + 2\lambda(2\vI+1)^2\rho\cT^{\prime}(e). $$
Finally, using the property \eqref{eq:z_0} of the fixed point $z_0$, we have
\begin{align*}
 \eta^e (z^{\prime}) &= \left( \theta + \frac{1}{4} - \frac{\cT(z_0) + 2\lambda(2\vI+1)^2\rho\cT^{\prime}(e)}{4\lambda} , \rho \right) \\
 &= \left( \theta - \frac{1}{2} (2\vI+1)^2\rho\cT^{\prime}(e) , \rho \right),
\end{align*}
which completes the proof.
\end{proof}

Note that the action of the conjugate map $T^e$ is independent of the parameter $\lambda$, 
which appears only in the domain $\eta^e_\lambda(\cX^e)$ in which the conjugate map is valid. 
Using the definition \eqref{eq:cXe} of the domain $\cX^e$ and the above expansion 
\eqref{eq:cP(z)_rho_expansion} of $\cH$ about the fixed point $z_0$, 
we see that this domain is given by
\begin{displaymath}
 \eta^e_\lambda(\cX^e) = \{ (\theta,\rho)\in\X \, : \; \cH(z_0) + 2\lambda(2\vI+1)^2\rho \in \Ie(\lambda) \}.
\end{displaymath}
The range of values of $\rho$ in the domain grows like
\begin{displaymath}
 \frac{|\Ie(\lambda)|}{2\lambda(2\vI+1)^2} = \frac{|\cIe|}{2\lambda(2\vI+1)^2} + O(1)
\end{displaymath}
as $\lambda\rightarrow 0$, and in the integrable limit, we think of the conjugate map as valid on the whole of $\X$.

For $e\in\cE$, let us consider the twist $\kappa(e)$ of the map $T^e$, given in (\ref{eq:kappa}).
Each $T^e$ is reversible, and can be written as the composition of the involutions $G$ and $H^e$, where
\begin{equation*}
 G(\theta,\rho) = (-\theta,\rho) \hskip 40pt H^e(\theta,\rho) = \left( -\theta +\rho\kappa(e),\, \rho \right).
\end{equation*}
We use the same symbol $G$ to denote the reversing symmetry on the plane 
(equation \eqref{eq:Dihedral}) and on the cylinder.
The fixed spaces of these involutions are given by 
\begin{align}
 \Fix{G} &= \{ (\theta, \rho) \in \X  \, : \; \theta\in\{-1/2,0\} \}, \nonumber \\ %\label{eq:Fix(cG)}
 \Fix{H^e} &= \{ (\theta, \rho) \in \X \, : \; \theta=-\frac{1}{2} \rho\kappa(e) \}. \label{eq:Fix(cH)} 
\end{align}

%%%%%%%%%%%%%%%%%%%%%%%%%%%%%%%%%%%%%%%%%%%%%%%%%%%%%%%% FIGURE
\begin{figure}[t]
  \centering
  \includegraphics[scale=1]{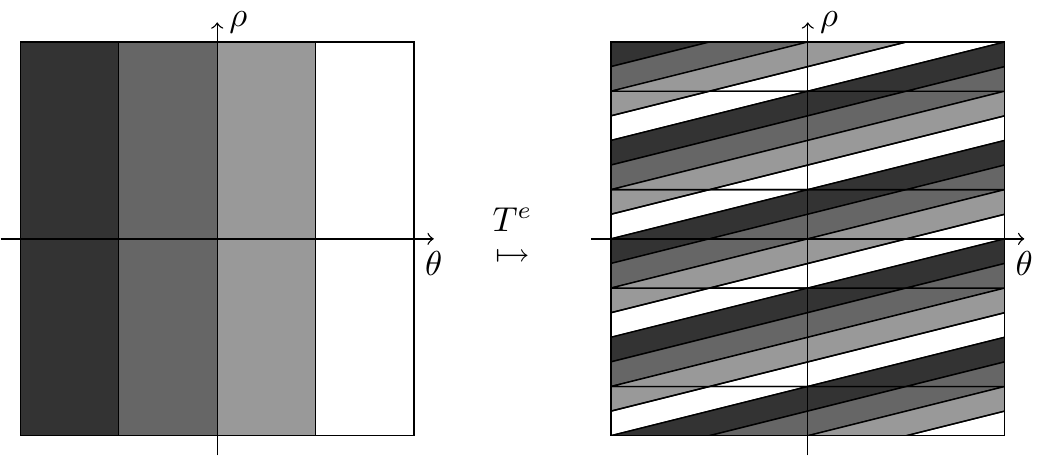} 
  \caption{A schematic representation of the action of the twist map 
$T^e$ on the space $\X=\bbS^1\times\R$}
\end{figure}
%%%%%%%%%%%%%%%%%%%%%%%%%%%%%%%%%%%%%%%%%%%%%%%%%%%%%%%%

The map $T^e$ is also reversible with respect to the reflection $(\theta,\rho)\mapsto(\theta,-\rho)$,
and equivariant under the group generated by the translation
\begin{equation} \label{eq:rho_bar}
 \rho \mapsto \rho + {\rho}^* \hskip 40pt {\rho}^* = \frac{1}{\kappa(e)}=\frac{-2}{(2\vI+1)^2\cT^{\prime}(e)}.
\end{equation}
Each circle $\rho=$ const. is invariant under $T^e$, 
and motion restricted to this circle is a rotation with rotation number $\rho/{\rho}^*$ (mod $1$).

%%%%%%%%%%%%%%%%%%%%%%%%%%%%%%%%%%%%%%%%%%%%%%%%%%%%%%%%%%%%%%%%%%%%%%%%%%
\subsection{The integrable return map at infinity}
%%%%%%%%%%%%%%%%%%%%%%%%%%%%%%%%%%%%%%%%%%%%%%%%%%%%%%%%%%%%%%%%%%%%%%%%%%

Now we are in a position to study the dynamics of the sequence of maps $T^e$ in the limit $e\rightarrow\infty$. 
As in section \ref{sec:Nonlinearity}, where we studied the nonlinearity of the flow, 
we turn our attention to the behaviour of the period function $\cT$.

As $\alpha\to\infty$, the period $\cT(\alpha)$ converges to $\pi$,
which is the period of the rotation with Hamiltonian $\cQ(x,y)=x^2 + y^2$.
Furthermore, the period undergoes damped oscillations about its limiting value \cite{ReeveBlack}.
If we write
\begin{equation} \label{eq:b}
 \alpha(n,b) = (n+b)^2 \hskip 40pt n\in\N, \; b\in[0,1),
\end{equation}
so that $n$ is the integer part of $\sqrt{\alpha(n,b)}$ and $b$ is the fractional part,
then one can show that the following limiting function exists:
 $$ \lim_{n\to\infty} \left( n^{3/2} (\cT(\alpha(n,b))-\pi) \right), $$
and is described qualitatively by the following function of the unit interval
\begin{equation} \label{eq:T_asymptotics}
 b \mapsto \frac{1}{3}(2b+1)^{3/2}-\sqrt{2b}
\end{equation}
(see figure \ref{fig:cTrho}).
The key feature of this limiting function is the singularity in its derivative at $b=0$.

%%%%%%%%%%%%%%%%%%%%%%%%%%%%%%%%%%%%%%%%%%%%%%%%%%%%%%%% FIGURE
\begin{figure}[t]
        \centering
\begin{minipage}{7cm}
        \includegraphics[scale=0.30]{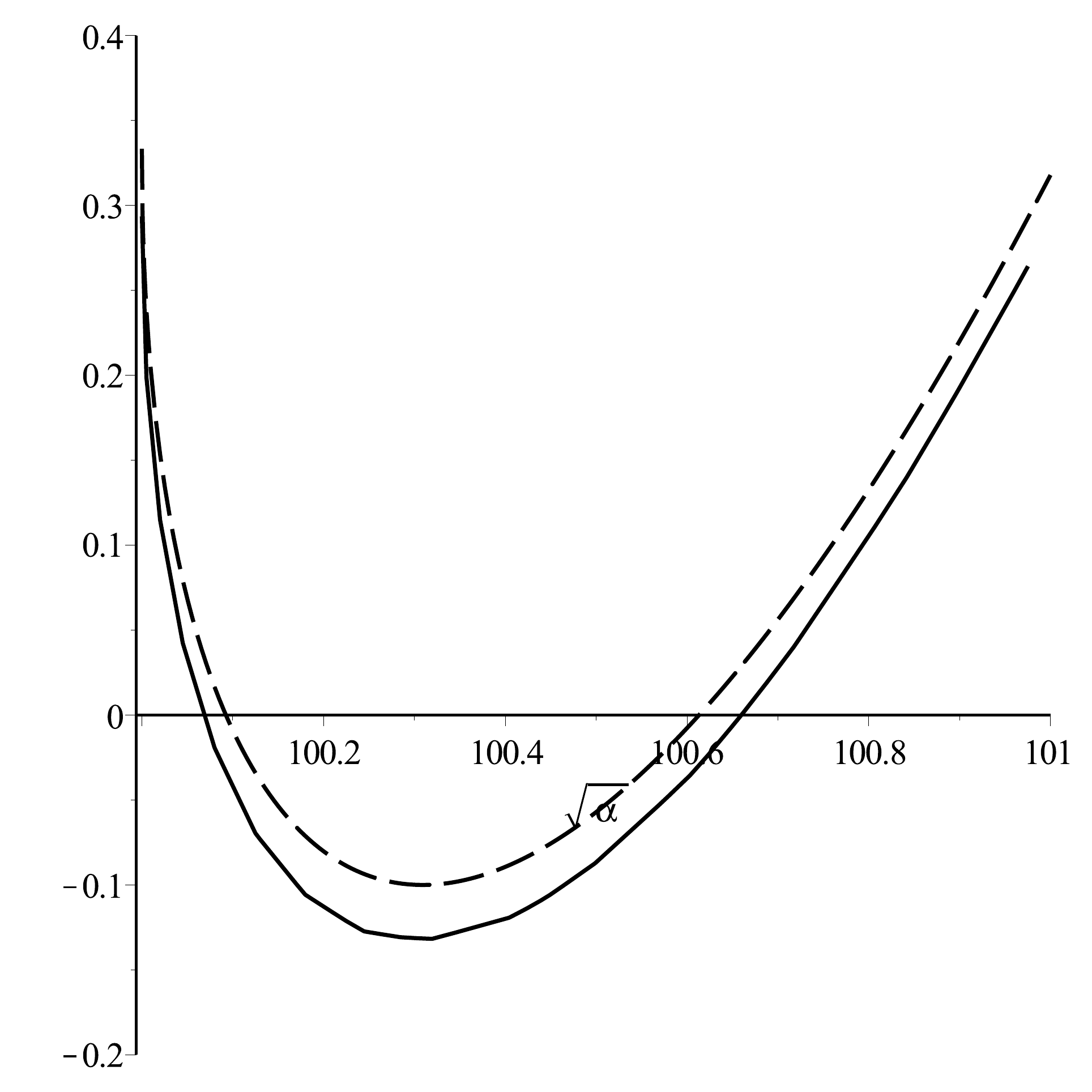} 
\end{minipage}
\begin{minipage}{7cm}
	  \includegraphics[scale=0.30]{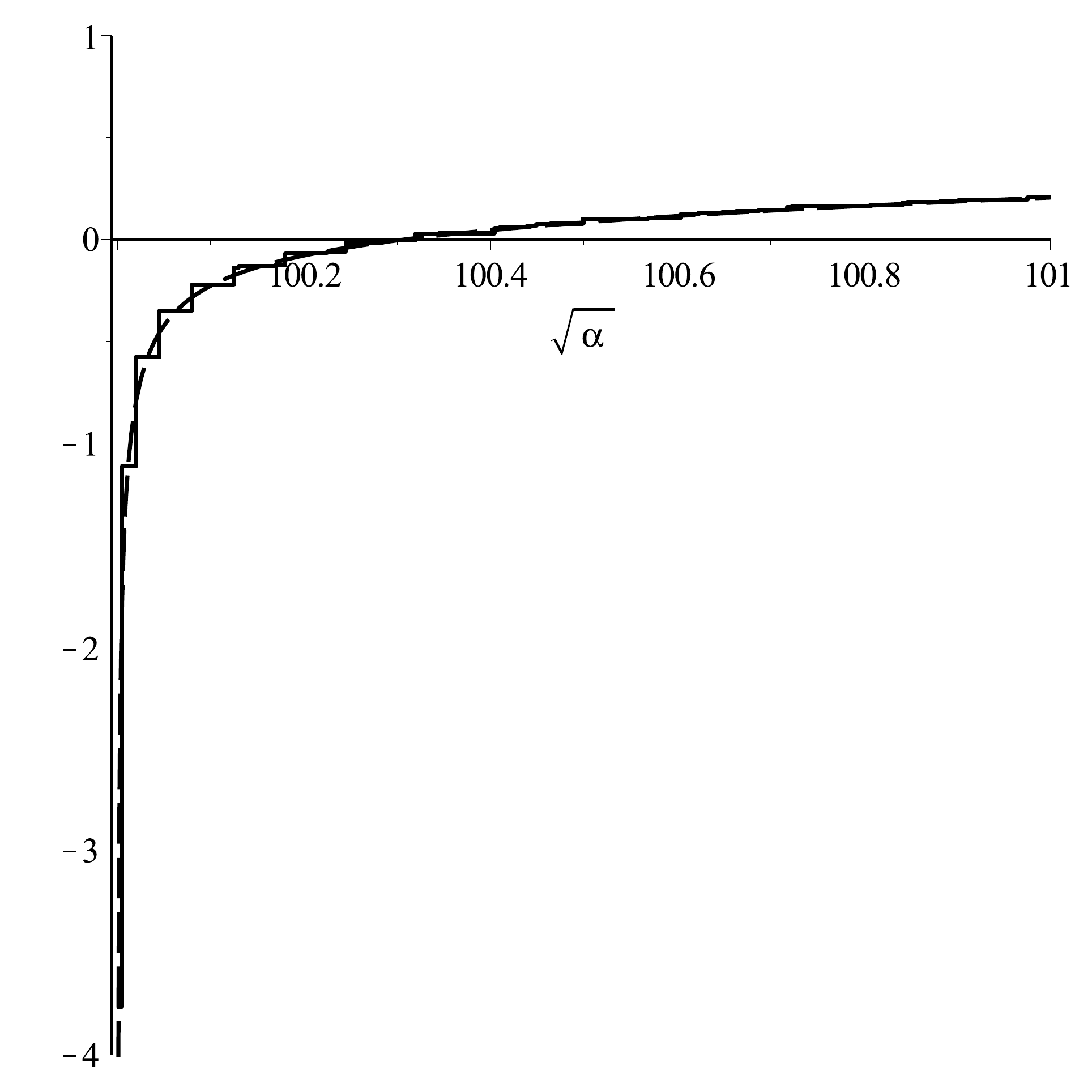} 
\end{minipage}
        \caption{Left: the function $\flsq{\alpha}^{3/2} \left(\cT(\alpha)-\pi\right)/4$ 
(solid line) against $\sqrt{\alpha}$ for $\sqrt{\alpha}\in[100,101)$, i.e., 
for $n=\flsq{\alpha}=100$ and $b=\{\sqrt{\alpha}\}\in[0,1)$. 
The dotted line shows the function $\frac{1}{3}(2b+1)^{3/2} - \sqrt{2b}$.
Right: the function $\frac{1}{2}(2\flsq{\alpha/2}+1)^2 \cT^{\prime}(\alpha)$ of equation \eqref{eq:rho_bar}
       (solid line). The dotted lines show the approximate limiting function \eqref{eq:ApproximateDerivative}.
        \label{fig:cTrho}}
\end{figure}
%%%%%%%%%%%%%%%%%%%%%%%%%%%%%%%%%%%%%%%%%%%%%%%%%%%%%%%% 

With $\alpha(n,b)$ as in \eqref{eq:b}, we let $e(n,b)$
be the critical number associated with $\alpha$:
\begin{equation} \label{eq:e(n,b)}
 (n+b)^2 \in \{e(n,b)\}\cup \cI^{e(n,b)} \hskip 40pt n\in\N, \; b\in[0,1).
\end{equation}
Then the behaviour of the sequence of twists $\kappa(e(n,b))$ as $n\to\infty$ is described by the
following theorem.

\begin{theorem} \label{thm:Tprime_asymptotics}
Let $b\in[0,1)$ and let $e(n,b)$ be given as in \eqref{eq:e(n,b)}.
Then as $n\rightarrow \infty$:
\begin{equation} \label{eq:Tprime_asymptotics}
 \kappa(e(n,b)) \rightarrow  
\left\{ \begin{array}{ll} 4 \quad & x=0, \\ 0 \quad & \mathrm{otherwise.}\end{array} \right..
\end{equation}
\end{theorem}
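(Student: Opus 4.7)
The plan is to substitute the formula for $\cT'(e)$ from proposition \ref{prop:T'(alpha)} into the definition of the twist, obtaining
\begin{equation*}
\kappa(e) \;=\; -2 + 8(2\vI+1)^2 \sum_{k=\vI+1}^{\vk}\frac{1}{(4k^2-1)\bigl(2\flsq{e-k^2}+1\bigr)},
\end{equation*}
and analysing this sum as $e=e(n,b)\to\infty$. Since $e\in[n^2,(n+1)^2)$ we have $\vk=n$ and $(2\vI+1)^2=2n^2+O(n)$; writing $\xi_k=k/n$, each summand for $k$ bounded away from $n$ is asymptotically $(1/n)\,f(\xi_k)$ with $f(\xi)=1/(4\xi^2\sqrt{1-\xi^2})$. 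A direct calculation (via $\xi=\sin\phi$) gives $\int_{1/\sqrt{2}}^{1} f(\xi)\,d\xi = 1/4$, so the bulk of the sum should contribute $1/4$, and any deviation of the full limit must come from a boundary correction near $k=n$, where $f$ blows up.

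To implement this rigorously, I would choose a cutoff $K=K(n)$ with $K\to\infty$ and $K/n\to 0$ (e.g.\ $K=\lfloor n^{1/2}\rfloor$, large enough to dominate the Riemann-sum error coming from the singularity of $f$ at $\xi=1$) and split the sum into a bulk $k\in[\vI+1,n-K]$ and a boundary $k\in(n-K,n]$. On the bulk, the factors $4k^2-1$, $(2\vI+1)^2$ and $2\flsq{e-k^2}+1$ can be replaced by their continuous approximations with relative errors $O(K/n)$, and a monotonicity-based sandwich estimate then shows the bulk contribution tends to $\int_{1/\sqrt{2}}^{1} f(\xi)\,d\xi = 1/4$.

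The dichotomy between $b=0$ and $b>0$ appears entirely in the boundary. If $b=0$ then $e=n^2$ exactly, so $\flsq{e-n^2}=0$ and the single term $k=n$ yields $(2\vI+1)^2/(4n^2-1)\to 1/2$; the remaining boundary terms $k=n-j$, $1\le j<K$, are each bounded by $\bigl(1+o(1)\bigr)/(4\sqrt{2nj})$ and sum to $O(\sqrt{K/n})\to 0$, giving $\kappa(e)\to -2+8(1/4+1/2)=4$. If $b>0$, the decisive observation is that $n^2+m^2$ is a sum of two squares for every integer $m$, so $e\ge n^2+m^2$ with $m=\flsq{2nb+b^2}\sim\sqrt{2nb}$; this implies $e-(n-j)^2\ge 2n(j+b)-O(\sqrt{n})$, bounding every boundary term by $\bigl(1+o(1)\bigr)/(4\sqrt{2n(j+b)})$ and hence the full boundary contribution by $O(\sqrt{K/n})\to 0$. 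Thus $\kappa(e)\to -2 + 8\cdot 1/4 = 0$.

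The main obstacle is the careful error bookkeeping near the singularity at $\xi=1$: the various denominator approximations, together with the Riemann-sum error (which grows like $|f'(\xi)|/n\sim(1-\xi^2)^{-3/2}/n$ near the endpoint), must all be shown to remain small compared to the $O(1)$ bulk integral and, in the $b=0$ case, to the $1/2$ contribution from the $k=n$ term; this is what dictates the choice $K\gg n^{1/3}$. The asymmetric lower bound $e\ge n^2+\flsq{2nb+b^2}^2$ is what cleanly separates the two cases, by suppressing the singular $k=n$ contribution as soon as $b>0$.
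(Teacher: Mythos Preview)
Your approach is correct and follows the same overall strategy as the paper: express $\kappa(e)$ via the sum in proposition~\ref{prop:T'(alpha)}, show that the bulk of the sum converges to the value $1/4$ (so that $-2+8\cdot 1/4=0$), and trace the dichotomy to the behaviour of the term $k=n$. The computations and bounds you outline all go through.

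There are two places where the paper's argument is leaner than yours. First, the paper separates off \emph{only} the single term $k=\vk=n$ and runs the integral approximation all the way up to $k=\vk-1$; the relative error in replacing the sum by the integral is already $O(\alpha^{-1/4})$ without any intermediate cutoff $K$, so your splitting into bulk and boundary, while perfectly valid, is not needed. Second, since $\flsq{e-k^2}=\flsq{\alpha-k^2}$ for every $\alpha\in\cI^e$, the paper simply takes $\alpha=(n+b)^2$ and reads off $\alpha-n^2=2nb+b^2$ directly; this makes your lower bound $e\geqslant n^2+\flsq{2nb+b^2}^2$ (though correct and rather elegant) unnecessary, and the $b>0$ case reduces to the single observation that $\flsq{2nb+b^2}\to\infty$.
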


\begin{proof}
Take $\alpha\geqslant 0$ and let $e\in\cE$ be such that $\alpha\in\cIe$.
From the formula \eqref{eq:Tprime(alpha)} for the derivative of the period function, we have
\begin{equation} \label{eq:Tprime(alpha)_II}
 \frac{1}{2} (2\vI+1)^2 \cT^{\prime}(\alpha) = 2 -8(2\vI+1)^2 \sum_{k=\vI+1}^{\vk} \frac{1}{(4k^2-1)(2\flsq{\alpha - k^2}+1)}.
\end{equation}
(Recall that if $\alpha\in\cIe$ for some $e\in\cE$, the expression in the denominator of the summand 
satisfies $\flsq{\alpha - k^2} = \flsq{e - k^2}$.)

We consider the summand in \eqref{eq:Tprime(alpha)_II}. For $k$ in the range $\vI+1\leqslant k\leqslant \vk-1$, we have
\begin{equation*}
 \frac{1}{(4k^2-1)(2\fl{\sqrt{\alpha - k^2}}+1)} =  \frac{1}{8k^2\sqrt{\alpha - k^2}} \, \left( 1 - \frac{1}{4k^2} \right)^{-1} \left( 1 - \frac{\{\sqrt{\alpha - k^2}\}-1/2}{\sqrt{\alpha - k^2}} \right)^{-1}.
\end{equation*}
Here the square root is bounded below by
 $$ \sqrt{\alpha - k^2} \geqslant \sqrt{\vk^2 - (\vk-1)^2} = \sqrt{2\vk -1} > \alpha^{1/4}, $$
whereas $k^2 >\alpha/2$. Hence as $\alpha\rightarrow\infty$, we have
\begin{equation} \label{eq:Tprime_bound1}
 \frac{1}{(4k^2-1)(2\fl{\sqrt{\alpha - k^2}}+1)} = \frac{1}{8k^2\sqrt{\alpha - k^2}} \left(1 + O\left(\frac{1}{\alpha^{1/4}}\right) \right).
\end{equation}

Summing over $k$, we can approximate all but the $k=\vk$ term sum with an integral:
\begin{align*}
 \sum_{k=\vI+1}^{\vk-1} \frac{1}{k^2\sqrt{\alpha - k^2}} 
 &= \int_{\vI+1/2}^{\vk-1/2} \frac{1}{x^2\sqrt{\alpha - x^2}} \left(1 + O\left(\frac{1}{\alpha^{1/4}}\right) \right) \, dx  \\
 &= \frac{1}{\alpha} \, \Big[ \tan{\theta} \Big]_{\theta_2}^{\theta_1} \left(1 + O\left(\frac{1}{\alpha^{1/4}}\right) \right)
\end{align*}
where we have used the substitution $x=\sqrt{\alpha}\cos{\theta}$, 
and the limits $\theta_1$ and $\theta_2$ satisfy:
\begin{align*}
 \cos{\theta_1} = \frac{\vI+1/2}{\sqrt{\alpha}} \hskip 40pt %\label{eq:cos1}\\
 \cos{\theta_2} = \frac{\vk-1/2}{\sqrt{\alpha}}. %\label{eq:cos2}
\end{align*}
Using Taylor's Theorem, we find that $\tan{\theta_1}$ and $\tan{\theta_2}$ 
are given by
\begin{align*}
 \tan{\theta_1} = 1 + O\left(\frac{1}{\sqrt{\alpha}}\right) \hskip 40pt %\label{eq:tan(theta1)}\\
 \tan{\theta_2} = O\left(\frac{1}{\alpha^{1/4}}\right), %\label{eq:tan(theta2)}
\end{align*}
so that
\begin{equation} \label{eq:Tprime_bound2}
 \sum_{k=\vI+1}^{\vk-1} \frac{1}{k^2\sqrt{\alpha - k^2}} 
 = \frac{1}{\alpha} \left(1 + O\left(\frac{1}{\alpha^{1/4}}\right) \right).
\end{equation}
Applying \eqref{eq:Tprime_bound1} and \eqref{eq:Tprime_bound2} to the sum in \eqref{eq:Tprime(alpha)_II}, we obtain
\begin{align*}
 & 8(2\vI+1)^2 \sum_{k=\vI+1}^{\vk-1} \frac{1}{(4k^2-1)(2\flsq{\alpha - k^2}+1)} \\
 &= (2\vI+1)^2 \sum_{k=\vI+1}^{\vk-1} \frac{1}{k^2\sqrt{\alpha - k^2}} \left(1 + O\left(\frac{1}{\alpha^{1/4}}\right) \right) \\
 &= \frac{1}{\alpha} (2\vI+1)^2 \left(1 + O\left(\frac{1}{\alpha^{1/4}}\right) \right) \\
 &= 2 + O\left(\frac{1}{\alpha^{1/4}} \right)
\end{align*}
as $\alpha\rightarrow\infty$. Thus substituting this back into \eqref{eq:Tprime(alpha)_II}, we see that only the $k=\vk$ term remains.
If $\alpha=(n+b)^2$, then $\vk=n$, and the remaining term is given by:
\begin{align} 
 \frac{1}{2} (2\vI+1)^2 \cT^{\prime}(\alpha) 
 &= \frac{-8(2\vI+1)^2}{(4n^2-1)\left(2\flsq{\alpha-n^2}+1\right)} + O\left(\frac{1}{\alpha^{1/4}} \right) \nonumber \\
 &= \frac{-4}{2\flsq{2nb+b^2}+1} \left(1 + O\left(\frac{1}{\sqrt{\alpha}}\right) \right) + O\left(\frac{1}{\alpha^{1/4}} \right). \label{eq:Tprime_final_term}
\end{align}
If $b>0$, then
\begin{equation*}% \label{eq:Tprime_convergence}
 \frac{1}{2\flsq{2nb+b^2}+1} \rightarrow 0
\end{equation*}
as $n\rightarrow\infty$. 
Thus the remaining term in \eqref{eq:Tprime_final_term} goes to zero and $(2\vI+1)^2 \cT^{\prime}(\alpha)$ vanishes in the limit.
However, if $b=0$, then we have
 $$ \frac{1}{2} (2\vI+1)^2 \cT^{\prime}(\alpha) = -4 + O\left(\frac{1}{\alpha^{1/4}} \right). $$
\end{proof}

The convergence \eqref{eq:Tprime_asymptotics} is not uniform: 
if $b$ is close to zero, the convergence can be made arbitrarily slow.
For a plot of $(2\vI+1)^2 \cT^{\prime}(\alpha)/2$, see figure \ref{fig:cTrho}.

The two regimes of behaviour for the sequence of return maps $T^e$ follow directly.

\begin{corollary} \label{corollary:Omega_asymptotics}
Let $b\in[0,1)$ and let $e(n,b)$ be given as in \eqref{eq:e(n,b)}.
If $b=0$, then the sequence $e(n,b)$ is simply the sequence of perfect squares,
and as $n\rightarrow\infty$, the sequence of functions $T^{e(n,b)}$ 
converges pointwise to a limiting function $T^{\infty}$ given by
\begin{equation*}
 T^{\infty}: \X \rightarrow \X
 \hskip 40pt 
 T^{\infty}(\theta,\rho) = \left( \theta + 4\rho, \rho \right).
\end{equation*}
If $b>0$, then the sequence $T^{e(n,b)}$ converges pointwise to the identity.
\end{corollary}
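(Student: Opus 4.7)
The plan is to derive the corollary directly from Theorem~\ref{thm:Tprime_asymptotics}, observing that the twist map $T^{e}$ of (\ref{eq:Omega^e}) depends on $e$ only through the scalar $\kappa(e)$ defined in (\ref{eq:kappa}). No further analysis of the period function is needed: the corollary is essentially a translation of a scalar limit into a limit for the maps themselves.

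First I would identify the subsequence $e(n,0)$ with the perfect squares. By the defining property (\ref{eq:e(n,b)}), $e(n,0)$ is the unique critical number with $n^{2}\in\{e(n,0)\}\cup\cI^{e(n,0)}$. Since $n^{2}=n^{2}+0^{2}\in\cE$ and the critical intervals (\ref{eq:Ie}) are by construction disjoint from $\cE$, this forces $e(n,0)=n^{2}$, giving the claimed subsequence.

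Next I would combine Theorems~\ref{thm:Omega_e} and~\ref{thm:Tprime_asymptotics}. For any fixed $(\theta,\rho)\in\X$, the formula (\ref{eq:Omega^e}) shows that $T^{e}(\theta,\rho)$ depends affinely, and in particular continuously, on the scalar $\kappa(e)$. Hence if $\kappa(e(n,b))$ converges to a limit $\kappa_{\infty}$, the sequence $T^{e(n,b)}(\theta,\rho)$ converges to the linear twist map with parameter $\kappa_{\infty}$ applied to $(\theta,\rho)$. Substituting the two cases of Theorem~\ref{thm:Tprime_asymptotics} then yields the two regimes: $\kappa(e(n,0))\to 4$ produces pointwise convergence of $T^{e(n,0)}$ to $T^{\infty}$, while $\kappa(e(n,b))\to 0$ for $b>0$ produces pointwise convergence to the identity.

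There is no serious obstacle, as the hard analytic work lies entirely in Theorem~\ref{thm:Tprime_asymptotics}. The only caveat worth flagging is that the convergence $\kappa(e(n,b))\to 0$ is not uniform in $b$, since the rate degenerates as $b\to 0^{+}$; however, this non-uniformity does not affect pointwise convergence of $T^{e(n,b)}$ at each fixed $(\theta,\rho)$, and so is invisible at the level of the corollary.
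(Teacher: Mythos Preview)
Your proposal is correct and matches the paper's own treatment: the corollary is stated without an explicit proof, being an immediate consequence of Theorem~\ref{thm:Tprime_asymptotics} via the observation that $T^{e}$ depends on $e$ only through the scalar $\kappa(e)$. Your identification of $e(n,0)$ with $n^{2}$ and your remark on non-uniformity in $b$ also mirror what the paper does in the surrounding text.
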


The (reversing) symmetries of $T^{e(n,b)}$ also converge as $n\rightarrow\infty$, 
leading to an asymptotic version of $H^e$. 
Here we note in particular the translation invariance \eqref{eq:rho_bar} of the sequence $T^{e(n,b)}$, whose magnitude satisfies
\begin{equation} \label{eq:rho_bar_asymptotics}
 |{\rho}^*| \rightarrow \left\{ \begin{array}{ll} 1/4 & \quad b=0 \\ \infty & \quad b>0 \end{array} \right.
\end{equation}
as $n\rightarrow\infty$.

%%%%%%%%%%%%%%%%%%%%%%%%%%%%%%%%%%%%%%%%%%%%%%%%%%%%%%%%%%%%%%%%%%%%%%%%%%
\section{The perturbed dynamics} \label{sec:PerturbedDynamics}
%%%%%%%%%%%%%%%%%%%%%%%%%%%%%%%%%%%%%%%%%%%%%%%%%%%%%%%%%%%%%%%%%%%%%%%%%%

Let us return to the perturbed dynamics, i.e., to the rescaled discretised rotation $\F$.
We saw that under a suitable change of coordinates, the unperturbed return map corresponds 
to a linear twist map $T^e$ on the cylinder, whose dependence on the parameter is singular 
in the limit $e\to\infty$. There is a corresponding perturbed twist map $\Phi$ of a 
discretised cylinder, which captures the recurrent behaviour of $\F$.
As $e\to\infty$ through a sub-sequence of $(e_i)$ (cf.~(\ref{eq:cE})), we get a sequence of 
discrete twist maps with vanishing discretisation length. The resulting dynamics will depend 
on the particular sub-sequence chosen.
In the regime where the twist vanishes in the limit, the round-off fluctuations result in 
intricate resonance structures, reminiscent of the island chains of near-integrable
Hamiltonian systems. Conversely, in the regime where the twist persists, there are no 
discernible phase space features and a form of `pseudochaos' emerges.

Throughout what follows, we adopt an informal approach, focussing on qualitative 
observations and numerical evidence.

%%%%%%%%%%%%%%%%%%%%%%%%%%%%%%%%%%%%%%%%%%%%%%%%%%%%%%%%%%%%%%%%%%%%%%%%%%
\subsection{The perturbation mechanism} \label{sec:PerturbationMechanism}
%%%%%%%%%%%%%%%%%%%%%%%%%%%%%%%%%%%%%%%%%%%%%%%%%%%%%%%%%%%%%%%%%%%%%%%%%%

The correspondence between the integrable flow described in the previous 
sections and the behaviour of the rescaled lattice map $\F$ in the limit 
$\lambda\to 0$ is illustrated by the following proposition,
which is a simple consequence of \cite[Proposition 1]{ReeveBlackVivaldi}.
It says that, within a bounded region of the plane, the lattice points
over which the time-advance map and the round-off dynamics agree have
full density as $\lambda\to 0$.

\begin{proposition} \label{prop:mu_1}
 For $r>0$, we define the set
\begin{equation}\label{eq:A}
 A(r,\lambda) = \{ z\in\lZ \, : \; \| z \|_{\infty} < r \}
\end{equation}
(with $\| (u,v) \|_{\infty} = \max (|u|,|v|)$).
Then
 \begin{displaymath}
  \lim_{\lambda\to 0} 
   \frac{ \# \{z\in A(r,\lambda) \, : \; \phil(z) = \F^4(z) \} }{\# A(r,\lambda)} =1.
 \end{displaymath}
\end{proposition}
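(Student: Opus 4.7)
The plan is to split the estimate into a count of the denominator and an upper bound on the numerator, and to use the fact that $\phil$ and $\F^4$ can differ only near the singular locus $\Delta$ of the Hamiltonian vector field.

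First I would write down the denominator explicitly. Since the lattice $\lZ$ has spacing $\lambda$, the number of points in the cube $\{\|z\|_\infty<r\}$ is
\begin{equation*}
\#A(r,\lambda)=(2\fl{r/\lambda}+1)^2=\frac{4r^2}{\lambda^2}\bigl(1+o(1)\bigr)\qquad \text{as } \lambda\to 0.
\end{equation*}
The core of the argument is therefore to show that the number of \emph{bad} points, where $\phil(z)\neq\F^4(z)$, is of strictly smaller order, namely $O(r^2/\lambda)$.

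Next I would localise the bad set. Both maps act on $\lZ$ as translations by a (possibly different) vector of the lattice $\bbV$. By construction, the auxiliary Hamiltonian field $\bfw$ is constant on each box $B_{m,n}$ (equation \eqref{eq:B_mn}), and since $\phil(z)=z+\lambda\bfw(z)$, the equality $\phil(z)=\F^4(z)$ holds whenever the discrete field $\bfv_\lambda(z)=\F^4(z)-z$ coincides with $\lambda\bfw(z)$. Invoking \cite[Proposition~1]{ReeveBlackVivaldi}, which is precisely the step the present statement is said to follow from, this coincidence fails only when $z$ and its image sit in different boxes, i.e.\ when $z$ lies within a strip of width $O(\lambda)$ of the singular set
\begin{equation*}
\Delta=\{(x,y)\in\R^2 \, : \; (x-\fl{x})(y-\fl{y})=0\}.
\end{equation*}
Thus there is a constant $C>0$ such that the bad set inside $A(r,\lambda)$ is contained in
\begin{equation*}
\Delta_{C\lambda}:=\{z\in\R^2 \, : \; \mathrm{dist}(z,\Delta)\leqslant C\lambda\}.
\end{equation*}

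Finally I would count lattice points in $A(r,\lambda)\cap\Delta_{C\lambda}$. Inside the cube $\|z\|_\infty<r$ the set $\Delta$ consists of at most $2(2\fl{r}+1)=O(r)$ axis-aligned segments, each of length at most $2r$. Each segment contributes a strip of width $O(\lambda)$, which contains $O(r/\lambda)$ lattice points of $\lZ$ (length $O(r)$ divided by spacing $\lambda$, times $O(1)$ transverse sites). Summing over segments yields
\begin{equation*}
\#\bigl(A(r,\lambda)\cap\Delta_{C\lambda}\bigr)=O(r^2/\lambda),
\end{equation*}
and therefore the ratio of bad to total points is $O(\lambda)$, which goes to zero. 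The main obstacle, which the paper sidesteps by citing \cite{ReeveBlackVivaldi}, is the verification that the bad set really is confined to $\Delta_{C\lambda}$; everything else is essentially a volume estimate for a tubular neighbourhood of a one-dimensional set.
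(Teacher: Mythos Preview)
Your proposal is correct and matches the paper's approach: the paper does not give a proof at all, but simply states that the proposition is ``a simple consequence of \cite[Proposition 1]{ReeveBlackVivaldi}'', and your argument is precisely the natural way to extract that consequence---localise the bad set to an $O(\lambda)$-tube around $\Delta$ via the cited result, then do the obvious volume count. One small clarification worth making explicit: the equality $\phil(z)=z+\lambda\bfw(z)$ can itself fail when the flow crosses a box boundary during time $\lambda$, but this too happens only within $O(\lambda)$ of $\Delta$ (the speed $|\bfw|$ is bounded on $A(r,\lambda)$ since $r$ is fixed), so it is absorbed into the same tubular estimate.
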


We isolate the lattice points $z\in\lZ$ where the fourth iterates of $\F$ deviate 
from the time-advance map $\phil$ by defining the set $\Lambda$ of \defn{transition points}. 
We say that a point $z\in\lZ$ is a transition point if $z$ and its image under $F_\lambda^4$
do not belong to the same box.
Then
\begin{equation} \label{eq:Lambda}
 \Lambda = \bigcup_{m,n\in\Z} \Lambda_{m,n},
\end{equation}
where
\begin{displaymath}
 \Lambda_{m,n} =  \F^{-4}(B_{m,n}\cap\lZ)\setminus B_{m,n}.
\end{displaymath}
For sufficiently small $\lambda$, 
all points $z\not=(0,0)$ where $\F^4(z)\neq z+\lambda\bfw(z)$ 
are transition points \cite[Lemma 4]{ReeveBlackVivaldi}. 
Hence $\F^4$ is equal to the time-$\lambda$ advance of the 
flow everywhere except at the transition points.

\begin{proposition} \label{prop:Lambda}
Let $A(r,\lambda)$ be as in equation \eqref{eq:A}.
Then for all $r>0$ there exists $\lambda^*>0$ such that, for all $\lambda<\lambda^*$ 
and $z\in A(r,\lambda)$, we have
\begin{displaymath}
 z \notin \Lambda \cup \{(0,0)\} \quad \Rightarrow \quad \F^4(z)= \phil(z).
\end{displaymath}
\end{proposition}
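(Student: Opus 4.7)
The plan is to combine \cite[Lemma 4]{ReeveBlackVivaldi}, recalled in the paragraph immediately preceding the proposition, with an elementary convexity observation about the boxes $B_{m,n}$. The cited lemma asserts that for all sufficiently small $\lambda>0$, every lattice point $z\in\lZ\setminus\{(0,0)\}$ with $\F^4(z)\neq z+\lambda\bfw(z)$ is a transition point, i.e.\ lies in $\Lambda$. First I would apply this in contrapositive form: given $z\in A(r,\lambda)\setminus(\Lambda\cup\{(0,0)\})$, one immediately obtains $\F^4(z)=z+\lambda\bfw(z)$.

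Second, I would promote this to the required equality $\F^4(z)=\phil(z)$. Let $B_{m,n}$ be the unique box containing $z$, so that $\bfw(z)=\bfw_{m,n}$. Because $z\notin\Lambda$, by the definition of a transition point the image $\F^4(z)=z+\lambda\bfw_{m,n}$ lies in the same box $B_{m,n}$. The Hamiltonian vector field is constant on $B_{m,n}$, so any integral curve of the flow that remains inside the box is a straight-line trajectory traversed at constant velocity $\bfw_{m,n}$. By convexity of $B_{m,n}$, the segment joining $z$ to $z+\lambda\bfw_{m,n}$ lies entirely within the box, which forces $\varphi_t(z)=z+t\bfw_{m,n}$ for every $t\in[0,\lambda]$; in particular $\phil(z)=z+\lambda\bfw(z)=\F^4(z)$.

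The one point requiring care, and the main obstacle if one is being pedantic, is the uniformity of the threshold $\lambda^*$ across the bounded region $A(r,\lambda)$. The cited lemma produces a threshold that might a priori depend on the point, yet for every $\lambda$ the set $A(r,\lambda)$ meets at most $O(r^2)$ of the boxes $B_{m,n}$, a bound independent of $\lambda$. Since the behaviour of $\F^4$ at a point is determined by the local configuration of the box containing it and its neighbours, it suffices to take the minimum of the finitely many thresholds produced by \cite[Lemma 4]{ReeveBlackVivaldi} over these boxes to obtain a single $\lambda^*$ valid throughout $A(r,\lambda)$. Once this uniformisation is in place, the two steps above deliver the result.
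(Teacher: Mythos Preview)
Your proposal is correct and matches the paper's approach: the proposition is stated there without an explicit proof, being presented as a direct consequence of \cite[Lemma~4]{ReeveBlackVivaldi} via the sentence immediately preceding it. Your argument simply spells out the two details the paper leaves implicit---the convexity step identifying $z+\lambda\bfw(z)$ with $\phil(z)$ when both endpoints lie in the same box, and the uniformity of $\lambda^*$ over the finitely many boxes meeting $A(r,\lambda)$---and both are sound.
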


%%%%%%%%%%%%%%%%%%%%%%%%%%%%%%%%%%%%%%%%%%%%%%%%%%%%%%%% FIGURE
\begin{figure}[t]
        \centering
        \includegraphics[scale=0.9]{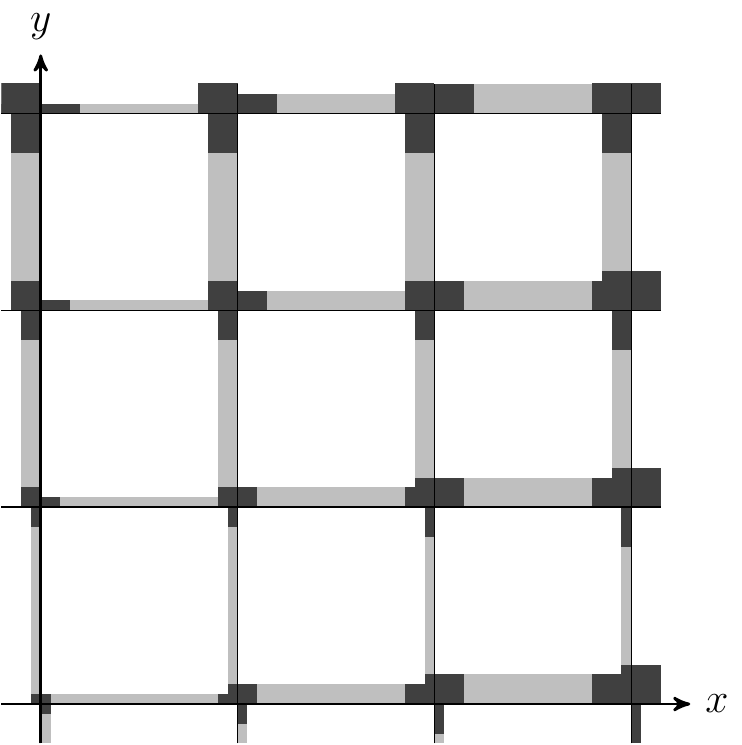}
        \caption{Structure of phase space. The boxes $B_{m,n}$, bounded by the
         set $\Delta$, include regular domains (white), where the motion is integrable. 
         The light grey regions comprise the set $\Lambda$ of transition points, given in \eqref{eq:Lambda}. 
         The dark grey regions near the integer lattice $\Z^2$ is the set $\Sigma$. The orbits
         that intersect it are the perturbed analogue of the critical polygons, and are excluded 
         from our analysis. }
        \label{fig:LambdaSigma_plot}
\end{figure}
%%%%%%%%%%%%%%%%%%%%%%%%%%%%%%%%%%%%%%%%%%%%%%%%%%%%%%%%

The transition points play the role of the vertices for the perturbed orbits,
and are the basis of the strip-map construction for the map $\F$:
for small $\lambda$, the set of transition points consists of thin 
strips of lattice points arranged along the lines $\Delta$ (see figure \ref{fig:LambdaSigma_plot}).
The \textbf{strip map} $\Psi$ is the transit map to $\Lambda$,
which is defined as follows:
\begin{equation} \label{eq:Psi}
  \Psi : \lZ \rightarrow \Lambda
 \hskip 40pt
  \Psi(z) = \F^{4t(z)}(z),
\end{equation}
where the transit time $t$ to $\Lambda$ is well-defined for all points excluding the origin.
(Since the origin plays no role in the present construction, to simplify 
notation we shall write $\lZ$ for $\lZ\setminus\{(0,0)\}$, where appropriate.)
The strip map moves points parallel to the relevant component of the piecewise-constant
vector field $\bfw$, with a possible additional small perturbation.
If $z\in\Lambda_{m,n}$:
\begin{equation}\label{eq:Psi^jp1}
 \Psi(z) = z + \lambda t(z)\bfw_{m,n} + \lambda(\bfv(z)-\bfw(z)).
\end{equation}

We shall exclude the regions of the phase space where multiple strips overlap,
which form a set $\Sigma\subset\lZ$, clustered around the lattice points of 
$\mathbb{Z}^2$ \cite[Section 5]{ReeveBlackVivaldi}.
Thus we exclude the orbits that intersect this set ---the so called \defn{critical orbits}.

In \cite[section 3]{ReeveBlackVivaldi}, we defined a Poincar\'e return map 
$\Phi$ on a neighbourhood $X$ of the positive half of the symmetry line $\Fix{G}$.
For time-scales corresponding to a first return to the domain $X$, every 
integrable orbit has a scaled return orbit that shadows it \cite[theorem 5]{ReeveBlackVivaldi}.
The orbits which are not critical form a sequence of subsets $\Xe$ of $X$, $e\in\cE$,
which correspond to the polygon classes of the flow.
The set $\Xe$ is the intersection of the lattice $\lZ$ with a thin rectangle lying along 
the symmetry line $\Fix{G}$:
\begin{equation}\label{eq:Xe}
 \Xe = \{ \lambda(x,y)\in\lZ \, : \;  -(2\vI+1) \leqslant x-y < 2\vI+1, \; \cP(\lambda x, \lambda y)\in\Ie(\lambda) \},
\end{equation}
where, just as in equation (\ref{eq:cXe}), the set $\Ie(\lambda)$ is a subset of 
$\cIe$ whose length tends to that of $\cIe$ 
as $\lambda\to0$ \cite[Proposition 6]{ReeveBlackVivaldi}:
\begin{displaymath}
 \lim_{\lambda\rightarrow 0}\frac{ |\Ie(\lambda)| }{|\cIe|} = 1.
\end{displaymath}
From this it follows that the sets $\Xe$ have full density in $X$ as $\lambda\to 0$.
We identify opposite sides of the set $\Xe$ which are connected by the local vector
field so that the dynamics takes place on a lattice on a cylinder.

For orbits which are not critical, the return map $\Phi$ can be constructed from repeated 
iterations of the strip map.  
For $e\in\cE$ and $z\in\Xe$:
\begin{equation} \label{eq:Phi_Psi}
\Phi(z) \equiv (\Psi^{2\vk+2}\circ F_{\lambda})(z) \mod{\lambda\bfw(z)},
\end{equation}
where the modulus reflects the periodicity of the transversal coordinate.
(Here we write $(\mathrm{mod}\,\mathbf{a})$ for some vector $\mathbf{a}$ to denote 
congruence modulo the one-dimensional module generated by $\mathbf{a}$.)
Relative to the integrable flow, each iterate of the strip map $\Psi$ produces an 
elementary perturbation of the type described in the introduction (figure \ref{fig:Scattering}),
and the number of such perturbations which contribute to the return map $\Phi$ diverges as $e\to\infty$.

%%%%%%%%%%%%%%%%%%%%%%%%%%%%%%%%%%%%%%%%%%%%%%%%%%%%%%%%%%%%%%%%%%%%%%%%%%
\subsection{The perturbed return map} \label{sec:PerturbedReturnMap}
%%%%%%%%%%%%%%%%%%%%%%%%%%%%%%%%%%%%%%%%%%%%%%%%%%%%%%%%%%%%%%%%%%%%%%%%%%

We turn our attention to the reversing symmetry group of the return map 
$\Phi$, which is inherited from the reversing symmetry $G$ of the lattice 
map $\F$ (see equation \eqref{eq:Dihedral}). 
In the following proposition we describe the form of this reversing symmetry 
on the domains $\Xe$ (equation \eqref{eq:Xe}).
Then we apply the change of coordinates $\eta^e$ from the previous section
to the domain $\Xe$ and compare the properties of the resulting discrete 
perturbed twist map to that of the unperturbed twist map $T^e$.

\begin{proposition} \label{prop:Ge}
 For $e\in\cE$, let $G^e$ be the involution of $\Xe$ given by 
 \begin{equation} \label{eq:Ge}
  G^e(x,y) = \left\{ 
      \begin{array}{ll} (y,x) \quad & |x-y| < \lambda(2\vI+1) \\
		      (x,y) \quad & x-y = -\lambda(2\vI+1).
      \end{array} \right.
 \end{equation}
Then for all sufficiently small $\lambda$, $G^e$ is a reversing symmetry of 
$\Phi$ on $\Xe$ in the following sense:
\begin{equation*}
 \forall z,\Phi(z)\in\Xe:
 \hskip 20pt 
 \Phi^{-1}(z) = (G^e \circ \Phi \circ G^e)(z).
\end{equation*}
\end{proposition}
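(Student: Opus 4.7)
The strategy is to lift the reversing symmetry $\F^{-1}=G\circ\F\circ G$ of the base map $\F$, inherited from the reversibility of the Knuth rotation (\ref{eq:Map}), up to the return map $\Phi$. Since (\ref{eq:Phi_Psi}) presents $\Phi$ as a composition of strip-map iterates with $\F$, each of which is itself a power of $\F$, one may write $\Phi(z)\equiv \F^{N(z)}(z)\mod{\lambda\bfw_{m,m}}$ for a positive integer $N(z)$, where $m=\vI$ is the box index of the segment of $\Xe$ on which $z$ lies. The reversibility of $\Phi$ will then follow from that of $\F$, once one knows that $G$ descends correctly to the cylindrical quotient of $\Xe$.

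First I would check that $G$ descends to the quotient $\Xe/{\sim}$, where $z$ is identified with $z+\lambda\bfw_{m,m}$. Since $G(\bfw_{m,m})=-\bfw_{m,m}$ by (\ref{eq:w_mn}), the map $G$ preserves the sublattice generated by $\lambda\bfw_{m,m}$, hence descends to the cylinder. For an interior point $|x-y|<\lambda(2\vI+1)$, the image $G(x,y)=(y,x)$ already lies in the fundamental domain $\Xe$, giving the first branch of $G^e$. For a boundary point $x-y=-\lambda(2\vI+1)$, the image $(y,x)$ falls on the excluded boundary $x-y=+\lambda(2\vI+1)$; subtracting $\lambda\bfw_{m,m}$ to return to $\Xe$ yields $(y-\lambda(2m+1),\,x+\lambda(2m+1))=(x,y)$, using $y=x+\lambda(2m+1)$ on this boundary. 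This recovers the second branch of (\ref{eq:Ge}), confirming that the involution induced on the cylinder is precisely $G^e$.

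Next I would exploit $\F^{-n}=G\circ\F^n\circ G$ for all $n$. Combined with the $G^e$-invariance of $\Xe/{\sim}$ from the previous step, this implies that the smallest positive $n$ with $\F^{-n}(z)\in\Xe\mod{\sim}$ coincides with the smallest positive $n$ with $\F^n(G^e(z))\in\Xe\mod{\sim}$; that is, the backward return time of $z$ equals $N(G^e(z))$. The computation then reads
\begin{equation*}
 \Phi^{-1}(z) \;\equiv\; \F^{-N(G^e(z))}(z) \;=\; G\bigl(\F^{N(G^e(z))}(G(z))\bigr) \;\equiv\; G^e\bigl(\Phi(G^e(z))\bigr) \mod{\lambda\bfw_{m,m}},
\end{equation*}
the outer congruence using that $G$ sends $\lambda\bfw_{m,m}$-shifts to $\lambda\bfw_{m,m}$-shifts (up to sign), so the mod-reduction commutes with $G$ at the level of the quotient.

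The main obstacle is the careful bookkeeping in the boundary case, where $G^e(z)=z$ as a point of $\Xe$ even though $G(z)\ne z$ in $\lZ$: the $\lambda\bfw_{m,m}$-shift realising the identification must be threaded consistently through the chain of $\F$-iterates so that the equality $N(G^e(z))=N(z)$ is unambiguous. The perturbation terms $\lambda(\bfv(z)-\bfw(z))$ appearing inside $\Psi$ in (\ref{eq:Psi^jp1}) cause no trouble, since they are already encoded in $\F$ and the identity $\F^{-1}=G\F G$ holds exactly on $\lZ$. Finally, the requirement that $\lambda$ be sufficiently small is needed to ensure, via Proposition \ref{prop:mu_1}, that $z$, $G^e(z)$, and their images under $\Phi$ all avoid the critical set $\Sigma$, so that (\ref{eq:Phi_Psi}) applies uniformly and the conjugacy identity extends to every $z\in\Xe$ with $\Phi(z)\in\Xe$.
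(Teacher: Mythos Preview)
Your proposal is correct and follows essentially the same strategy as the paper: both lift the reversibility $\F^{-1}=G\circ\F\circ G$ to the return map by writing $\Phi$ as a power of $\F$ and then checking that $G$ induces precisely $G^e$ on the cylinder (the paper phrases the boundary case via $G^e=\F^{-4}\circ G$, which is your translation by $-\lambda\bfw_{m,m}$ in disguise, since $\F^4(z)=z+\lambda\bfw_{m,m}$ on $\Xe$). One small inaccuracy: your appeal to Proposition~\ref{prop:mu_1} in the last paragraph is misplaced, since that result is a density statement; the avoidance of $\Sigma$ is already built into the definition of $\Xe$ via the interval $\Ie(\lambda)$, so no further justification is needed there.
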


\begin{proof}
Recall that $\Xe\subset B_{m,m}\setminus\Lambda$, where $m=\vI$, so that if $z\in\Xe$, then
 $$ \F^4(z) = z + \lambda\bfw(z) = z + \lambda\bfw_{m,m}. $$
Furthermore, if $z$ lies on the line $x-y=-\lambda(2\vI+1)$, 
then by the definition \eqref{eq:w_mn} of the auxiliary vector field, we have
 $$ G(z) = z + \lambda\bfw_{m,m}. $$
Combining the above, we have the following relationship between $G^e$ and $G$:
\begin{equation} \label{eq:Ge_G}
 G^e(x,y) = \left\{ 
      \begin{array}{ll} G(x,y) \quad & |x-y| < \lambda(2\vI+1), \\
		      (\F^{-4}\circ G)(x,y) \quad & x-y = -\lambda(2\vI+1).
      \end{array} \right.
\end{equation}
 
If $\tau=\tau(z)$ is the return time of $z$, then by construction
  $$ \Phi(z) = \F^{\tau}(z), $$
and the reversibility of $\F$ with respect to $G$ gives us that
\begin{equation} \label{eq:Phi_reversibility}
 (G \circ \Phi)(z) = (\F^{-\tau} \circ G)(z).
\end{equation}

Suppose now that neither $z$ nor $\Phi(z)$ lies on the line $x-y=-\lambda(2\vI+1)$.
Then combining \eqref{eq:Ge_G} and \eqref{eq:Phi_reversibility} gives
 $$ (G^e \circ \Phi)(z) = (\F^{-\tau} \circ G^e)(z). $$
Furthermore, this point lies in $\Xe$, so that
 $$ (G^e \circ \Phi)(z) = (\Phi^{-1} \circ G^e)(z), $$
as required.

Suppose now that $\Phi(z)$ lies on the line $x-y=-\lambda(2\vI+1)$ but $z$ does not.
In this case, combining \eqref{eq:Ge_G} and \eqref{eq:Phi_reversibility} gives
 $$ (G^e \circ \Phi)(z) = (\F^{-4} \circ G \circ \Phi)(z) = (\F^{-(\tau+4)} \circ G)(z) = (\Phi^{-1} \circ G^e)(z). $$
The other cases proceed similarly.
\end{proof}

\medskip
Note that the reversing symmetry $G^e$ is simply an adaptation of the original
reversing symmetry $G$ of $\F$.
%%%%%%%%%%%%%%%%%%%%%%%%%%%%%%%%%%%%%%%%%%%%%%%%%%%%%%%%%%%%%%%%%%%%%%%%%%

Recall the map $T^e$ (equation \eqref{eq:T^e}), 
which corresponds to the unperturbed return map under the change of coordinates 
$\eta^e(\lambda):(x,y)\mapsto(\theta,\rho)$ of equation \eqref{eq:rho_theta}. 
In the integrable limit $\lambda\to 0$, the image of the return domain 
$\cX^e$ under $\eta^e$ approaches the unit cylinder $\X$.
%The dynamics of $T^e$ on the cylinder is that of a linear twist map.
The domain $\Xe$ of the perturbed return map $\Phi$ is a subset of $\cX^e$, 
thus we can also apply the change of coordinates $\eta^e$ to $\Xe$.
Using the definition \eqref{eq:Xe} of $\Xe$, we see that its image under $\eta^e$ 
approaches the following set\footnote{Recall that in the definition of $\eta^e$, 
the pre-image of the origin is some fixed point $z_0$ of the unperturbed dynamics. 
In the discrete case we round $z_0$ onto the lattice. 
The choice of the fixed point $z_0$ will have no bearing on the qualitative or statistical properties of the dynamics.}
(see figure \ref{fig:Lattice2}):
\begin{equation} \label{eq:rho_theta_lattice}
 \left\{ \frac{1}{2(2\vI+1)} \, (i,i+2j) \,: \; i,j\in\Z, \; -(2\vI+1)\leqslant i < 2\vI+1 \right\} \subset \X.
\end{equation}
(The image of the point $\lambda(x,y)$ corresponds to $i=x-y$ and $j=y-x_0$.)
This set is a rotated square lattice in the unit cylinder, with lattice spacing $1/\sqrt{2}(2\vI+1)$.
Thus $e\to\infty$ corresponds to the continuum limit. 
We think of the dynamics of $\Phi$ as taking place in the $(\theta,\rho)$ coordinates, 
and of $\Phi$ as a discrete version of $T^e$.

%%%%%%%%%%%%%%%%%%%%%%%%%%%%%%%%%%%%%%%%%%%%%%%%%%%%%%%% FIGURE
\begin{figure}[t]
  \centering
  \includegraphics[scale=0.8]{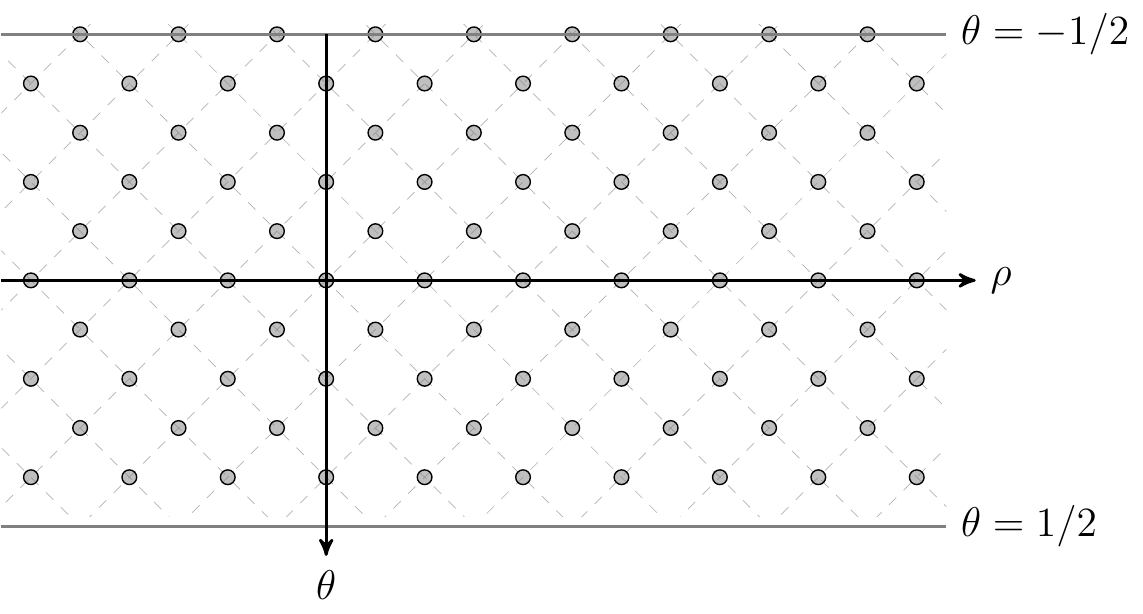} 
  \caption{The image of the domain $\Xe$ under the change of coordinates 
 $\eta^e_\lambda: \lZ\to \mathbb{X}$. 
 There are $2\vI+1$ lattice points per unit length in each of the coordinate directions.}
  \label{fig:Lattice2}
\end{figure}
%%%%%%%%%%%%%%%%%%%%%%%%%%%%%%%%%%%%%%%%%%%%%%%%%%%%%%%%

The map $T^e$ is a linear twist map, with characteristic length scale $\brho$ in the 
$\rho$-direction (see equation \eqref{eq:rho_bar}).
The map $\Phi$ can be considered as a perturbation of $T^e$ which originates from space 
discretisation. As $e\to\infty$, the fluctuations caused by the perturbation are small 
relative to the width of the cylinder, do not have any obvious structure, and affect 
the dynamics in both the $\rho$ and $\theta$ directions.

%%%%%%%%%%%%%%%%%%%%%%%%%%%%%%%%%%%%%%%%%%%%%%%%%%%%%%%%
\subsection{Qualitative description and phase plots}

Recall the sequence $e(n,b)$ of critical numbers defined in \eqref{eq:e(n,b)}.
By corollary \ref{corollary:Omega_asymptotics}, if $b\neq0$, then the sequence 
of maps $T^{e(n,b)}$ converge non-uniformly to the identity as $n\to\infty$,
whereas if $b=0$, then $T^{e(n,b)}$ converges to the map $(\theta,\rho)\mapsto(\theta+4\rho,\rho)$.
Accordingly, the characteristic length scale $\brho$ of the twist dynamics is also singular in the limit, 
with $|\brho|\to\infty$ or $\brho\to1/4$, for the $b\neq 0$ and $b=0$ cases, respectively 
(see equation \eqref{eq:rho_bar_asymptotics}).

We define the \defn{rotation number} $\nu$ of a point on the cylinder
to be its rotation number under the twist map $T^e$, i.e.,
 $$ \nu(\theta,\rho) = \frac{\rho}{\brho} \mod{1} \hskip 40pt (\theta,\rho)\in\X. $$
Similarly for a point $z\in\Xe$, we write $\nu(z)$ to denote the rotation number 
of the corresponding point $\eta^e(z)$ on the cylinder.
Subsets of the cylinder of the form 
\begin{equation} \label{eq:fundamental_domain}
 \left\{ (\theta,\rho) \,: \; \nu(\theta,\rho) -m \in [-1/2,1/2) \right\}\subset \X \hskip 40pt m\in\Z 
\end{equation}
are referred to as \defn{fundamental domains} of the dynamics.
The number $N$ of lattice points of $\eta^e(\Xe)$ per fundamental domain varies like
\begin{equation} \label{eq:pts_per_fundamental_domain}
 N \sim 2(2\vI+1)^2|\brho|
\end{equation}
as $e\rightarrow\infty$.
For a given value of $e$, we expect the dynamics of $\Phi$ to be qualitatively the same
in each fundamental domain, but to vary locally according to the rotation number.
Consequently, in order to observe the global behaviour of $\Phi$, 
we need to sample whole fundamental domains.
However, this is made difficult by the divergence of $\brho$ for $b\neq 0$.

To better understand the divergence of $\brho$, we consider the limiting form 
\eqref{eq:T_asymptotics} of the period function $\cT(\alpha)$.
By differentiating the function (\ref{eq:T_asymptotics}) (see figure \ref{fig:cTrho}) 
we expect, for large $e$, the piecewise-constant function $\cT^{\prime}(\alpha)$ to 
behave approximately as
\begin{equation} \label{eq:ApproximateDerivative}
\frac{1}{2}(2\flsq{\alpha/2}+1)^2 \cT^{\prime}(\alpha) \approx \frac{2}{\sqrt{n}} \left(\sqrt{2b+1}-1/\sqrt{2b}\right) 
    \hskip 40pt \alpha=(n+b)^2,\quad b\neq 0.
\end{equation}
By \eqref{eq:rho_bar}, this leads us to expect that 
\begin{equation}\label{eq:rho_barApprox}
\brho(\alpha) \approx -\frac{\sqrt{n}}{2} \left(\sqrt{2b+1}-1/\sqrt{2b}\right)^{-1} \hskip 40pt b\neq 0. 
\end{equation}
Figure \ref{fig:rho_barApprox} shows that this rough analysis is valid, 
although the relationship between the two functions is far from uniform.

Thus we see that $\brho(\alpha(n,b))$ typically grows slowly, like $\sqrt{n}$, 
except near $b=(-1+\sqrt{5})/4\approx0.3$, 
where our approximate form for the derivative $\cT^{\prime}$ is zero.
We can exploit this behaviour to examine orbits of $\Phi$ in domains $\Xe$ where the twist 
$\kappa(e)$ of $T^e$ is large ($b=0$ ---see figure \ref{fig:resonance}(a)), 
moderate ($b=0.8$ ---see figure \ref{fig:resonance}(b)) 
or almost zero ($b=0.3$ ---see figure \ref{fig:ResonancesCloseUp}).

%%%%%%%%%%%%%%%%%%%%%%%%%%%%%%%%%%%%%%%%%%%%%%%%%%%%%%%% FIGURE
\begin{figure}[t]
        \centering
        \includegraphics[scale=0.18]{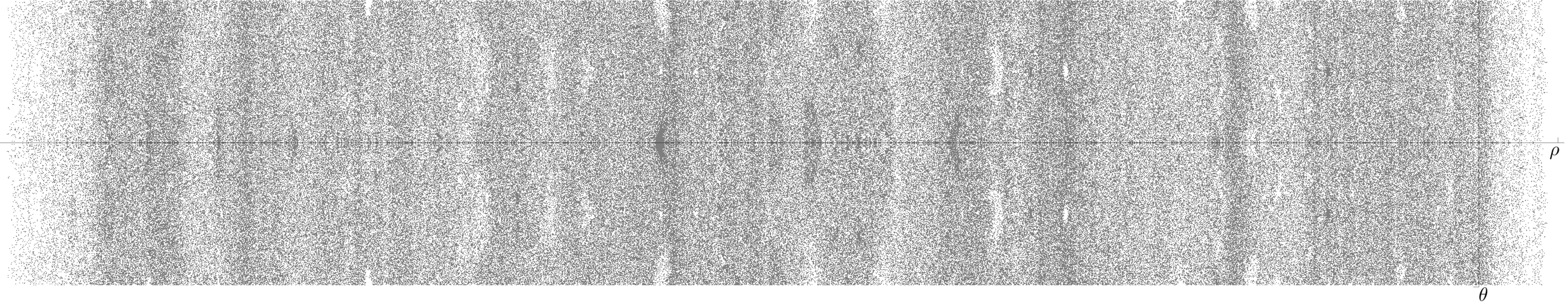} \\
        (a) $e=40000=200^2$ \\
        ~ \\
        \includegraphics[scale=0.18]{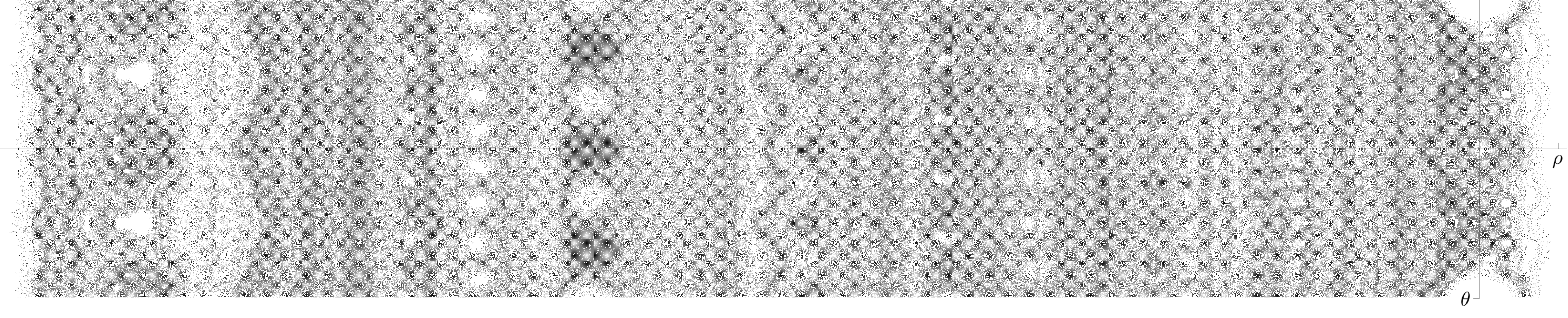} \\
        (b) $e=40309\approx200.8^2$
        \caption{
        Two pixel plots showing a large number of symmetric orbits of the return map $\Phi$ 
        in the cylindrical coordinates $(\theta,\rho)\in\X$.
        The resolution is such that the width of the cylinder (the $\theta$ direction) consists of approximately $280$ lattice sites.
        In both cases, the orbits plotted occupy almost half of the region of phase space pictured.
        The stark contrast between the two plots is caused by the difference in the twist $\kappa(e)$:
        in plot (a) $\kappa(e)\approx 4$, whereas in plot (b) $\kappa(e)\approx -0.1$.
        The values of $\lambda$ used are (a) $\lambda\approx 7\times 10^{-9}$ and (b) $\lambda\approx 4\times 10^{-8}$.
        In figure (b), the primary resonance at the origin is clearly visible, whereas a period $2$
        resonance, which occurs at $\rho=1/2K(e)$, is seen to the left of the plot.}
        \label{fig:resonance}
\end{figure}
%%%%%%%%%%%%%%%%%%%%%%%%%%%%%%%%%%%%%%%%%%%%%%%%%%%%%%%%%%%%%%%%%%%%%%%%%%%%%%%%%%%%%%%%%%%%%%%%%%% FIGURE

\begin{figure}[h]
        \begin{minipage}{7cm}
	  \centering
	  \includegraphics[scale=0.35]{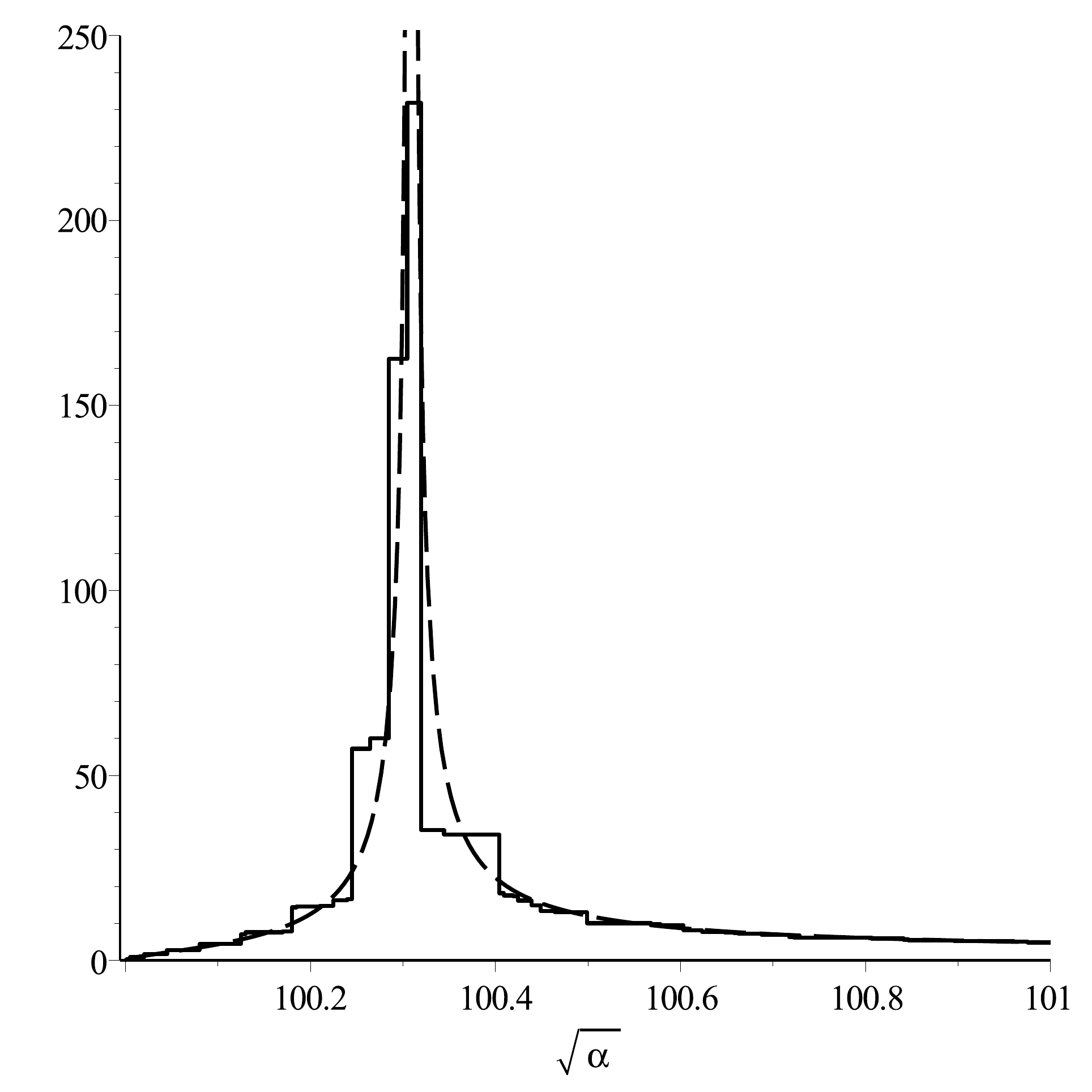} 
	\end{minipage}
        \caption{A plot of $|\brho|$ of equation (\ref{eq:rho_bar}) against $\sqrt{\alpha}$ for 
        $\sqrt{\alpha}\in[100,101)$, i.e., for $\flsq{\alpha}=100$ and $b=\{\sqrt{\alpha}\}\in[0,1)$ (solid line). 
        The dotted line shows the approximate limiting function \eqref{eq:rho_barApprox}.}
        \label{fig:rho_barApprox}
\end{figure}
%%%%%%%%%%%%%%%%%%%%%%%%%%%%%%%%%%%%%%%%%%%%%%%%%%%%%%%% %%%%%%%%%%%%%

%%%%%%%%%%%%%%%%%%%%%%%%%%%%%%%%%%%%%%%%%%%%%%%%%%%%%%%% FIGURE
\begin{table}[t]
        \centering
        \small
        \begin{tabular}{ c c c | c|c | c|c |}
                \cline{4-7}
                 & & & \multicolumn{2}{ |c| }{$\Delta\rho$} & \multicolumn{2}{ |c| }{$\Delta\nu$} \\
                \hline
                \multicolumn{1}{ |c| }{$e$} & \multicolumn{1}{ |c| }{$\vk$} & \multicolumn{1}{ |c| }{$\brho$} 
			      & Median & Maximum & Median & Maximum \\
                \hline
                \multicolumn{1}{|c|}{$10\,000$} & \multicolumn{1}{|c|}{$100$} & \multicolumn{1}{|c|}{$0.266$}  
			    & $0.18$ & $0.65$ & $0.69$ & $2.5$ \\
                \multicolumn{1}{ |c| }{$40\,000$} & \multicolumn{1}{ |c| }{$200$} & \multicolumn{1}{ |c| }{$0.259$} 
			    & $0.16$ & $0.55$ & $0.63$ & $2.1$ \\
                \multicolumn{1}{ |c| }{$160\,000$} & \multicolumn{1}{ |c| }{$400$} & \multicolumn{1}{ |c| }{$0.257$}  
			    & $0.14$ & $0.48$ & $0.54$ & $1.9$\\
                \multicolumn{1}{ |c| }{$640\,000$} & \multicolumn{1}{ |c| }{$800$} & \multicolumn{1}{ |c| }{$0.255$}
			    & $0.13$ & $0.49$ & $0.51$ & $1.9$ \\
                \hline
        \end{tabular}\\[0.2cm]
        (a) $b=0$ \\[0.5cm]

        \begin{tabular}{ c c c  | c|c | c|c |}
                \cline{4-7}
                 & & & \multicolumn{2}{ |c| }{$\Delta\rho$} & \multicolumn{2}{ |c| }{$\Delta\nu$} \\
                \hline
                \multicolumn{1}{ |c| }{$e$} & \multicolumn{1}{ |c| }{$\vk$} & \multicolumn{1}{ |c| }{$\brho$} 
			      & Median & Maximum & Median & Maximum \\
                \hline
                \multicolumn{1}{ |c| }{$10\,057$} & \multicolumn{1}{|c|}{$100$} & \multicolumn{1}{ |c| }{$163$}
			    & $0.14$ & $2.9$ & $8.7\times 10^{-4}$ & $1.8\times 10^{-2}$ \\
                \multicolumn{1}{ |c| }{$40\,113$} & \multicolumn{1}{|c|}{$200$} & \multicolumn{1}{ |c| }{$106$}
			    & $0.25$ & $2.0$ & $2.3\times 10^{-3}$ & $1.9\times 10^{-2}$ \\
                \multicolumn{1}{ |c| }{$160\,234$} & \multicolumn{1}{|c|}{$400$} & \multicolumn{1}{ |c| }{$4105$}
			    & $4.8$ & $8.5$ & $1.2\times 10^{-3}$ & $2.1\times 10^{-3}$ \\
                \hline
        \end{tabular}\\[0.2cm]
        (b) $b=0.3$
        \caption{A table showing the values of $\brho$ for various values of $e$, 
        and the typical range $\Delta\rho$ and $\Delta\nu$ of $\rho(z)$ and $\nu(z)$, respectively, along orbits. 
        The distribution of the range was calculated according to the fraction of points sampled whose orbit has the given range.}
        \label{table:nu_range}
\end{table}
%%%%%%%%%%%%%%%%%%%%%%%%%%%%%%%%%%%%%%%%%%%%%%%%%%%%%%%%

What we see in the case of vanishing twist ($b\neq 0$) is a sea of 
resonance-like structures, resembling the island chains in a 
near-integrable Hamiltonian system.
The global periodicity of the twist map recedes to infinity, and the 
phase portrait is determined by the local rotation number.
Although orbits may wander over a significant range in the $\rho$-direction,
the variation of the rotation number $\nu(z)$ along orbits is vanishingly small
(see table \ref{table:nu_range}(b)).

In the $b=0$ case the global structure remains, and phase portraits appear featureless.
The typical variation in the rotation number along orbits is $1/2$ (see table \ref{table:nu_range}(a)), 
leading to orbits which typically do not cluster in the $\theta$-direction.
In this case it makes sense to consider the statistical properties of orbits of $\Phi$,
and in the next section we consider their period distribution function.

%\newpage

%%%%%%%%%%%%%%%%%%%%%%%%%%%%%%%%%%%%%%%%%%%%%%%%%%%%%%%%
\subsection{The period distribution function} \label{sec:pdf}

For $e=n^2$, i.e., $b=0$, we wish to investigate whether the dynamics of $\Phi$ are sufficiently 
disordered that its period statistics are those of random reversible dynamics.
The following result provides the relevant probabilistic model:

\begin{theorem} \cite[Theorem A]{RobertsVivaldi09} \label{thm:GammaDistribution}
Let $(G,H)$ be a pair of random involutions of a set $\Omega$ with $N$ points, and let
 \begin{equation*} %\label{eq:gh_kappa}
  g=\#\Fix{G} \hskip 40pt h=\#\Fix{H} \hskip 40pt \gamma = \frac{2N}{g+h}.
 \end{equation*}
 Let $\cD_N(x)$ be the expectation value of the fraction of $\Omega$ occupied by periodic orbits 
 of $H\circ G$ with period less than $\gamma x$, computed with respect to the uniform probability. 
 If, with increasing $N$, $g$ and $h$ satisfy the conditions
\begin{equation} \label{eq:g_h_conds}
 \lim_{N\rightarrow\infty} g(N) + h(N) = \infty \hskip 40pt \lim_{N\rightarrow\infty} \frac{g(N)+h(N)}{N} = 0,
\end{equation}
 then for all $x\geqslant 0$, we have the limit 
 $$ 
\cD_N(x) \rightarrow \cR(x)= 1 - e^{-x}(1+x).
$$
Moreover, almost all points in $\Omega$ belong to symmetric periodic orbits.
\end{theorem}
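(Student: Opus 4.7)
My plan is to reduce the problem, via the algebraic structure of reversibility, to a classical waiting-time computation for a random walk, and then to relate the resulting exponential statistics to the claimed Gamma distribution through a size-biasing argument.

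First I would establish the orbit combinatorics forced by reversibility. The identity $G T G = G H G G = T^{-1}$ gives $G T^j = T^{-j} G$, so that for $z_0\in\Fix{G}$ one has $T^j z_0 \in \Fix{G}$ iff $2j\equiv 0\pmod k$ and $T^j z_0 \in \Fix{H}$ iff $2j\equiv 1\pmod k$, where $k$ is the period. In either parity, a generic symmetric orbit contains exactly two points of $\Fix{G}\cup\Fix{H}$, and the $T$-distance $\tau$ between them satisfies $k = 2\tau$ or $k = 2\tau-1$. Counting $N$ in two ways (by orbits and by fixed points) then gives the number of symmetric orbits as $(g+h)/2 + o(g+h)$.

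Next I would prove the key probabilistic fact: for the uniform random involution model, the first return time $\tau$ from $\Fix{G}\cup\Fix{H}$ to itself under $T$ is asymptotically geometric with parameter $p = (g+h)/N$. Heuristically, each iterate $T^j z_0$ behaves like an independent uniform sample from $\Omega$, so has probability $p$ of landing in $\Fix{G}\cup\Fix{H}$. Making this rigorous is the main obstacle: the involutions $G$ and $H$ are fixed once sampled, so successive iterates are correlated, and one must couple the partially revealed portions of $G$ and $H$ with genuinely fresh uniform samples and control the total-variation error over the $O(1/p)$ iterations that matter. The hypotheses $(g+h)/N\to 0$ and $g+h\to\infty$ in (\ref{eq:g_h_conds}) play complementary roles: the first forces $p\to 0$ so that $p\tau$ converges from geometric to $\mathrm{Exp}(1)$; the second makes the sampling-without-replacement corrections negligible on the relevant time scale.

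Combining the two steps, for a uniformly chosen symmetric orbit the rescaled period $k/\gamma = p\cdot k/2 + O(p)$ converges in distribution to $\mathrm{Exp}(1)$. Finally I would convert from a uniformly random orbit to a uniformly random point: the probability that $z\in\Omega$ lies on an orbit of length $k$ is proportional to $k\cdot n_k$, so the period distribution of $z$ is the size-biased version of the orbit-period distribution. Size-biasing $\mathrm{Exp}(1)$ yields the density $x e^{-x}$ of $\mathrm{Gamma}(2,1)$, whose cumulative distribution function is $1-e^{-x}(1+x) = \cR(x)$, giving $\cD_N(x)\to \cR(x)$. The same size-biased computation shows that the expected total mass of symmetric orbits equals $\tfrac{g+h}{2}\cdot \gamma = N$, so the fraction of $\Omega$ belonging to non-symmetric orbits is $o(1)$ in expectation, establishing the final clause.
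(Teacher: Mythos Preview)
The paper does not prove this theorem: it is quoted verbatim as Theorem~A of \cite{RobertsVivaldi09} and used only as a benchmark against which the numerical period statistics of $\Phi$ are compared. So there is no proof in the paper to compare your proposal against.

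As a standalone sketch, your outline captures the right ideas. The orbit combinatorics are correct (a generic symmetric orbit meets $\Fix{G}\cup\Fix{H}$ in exactly two points, and the period is essentially twice the $T$-distance between them), the size-biasing step from $\mathrm{Exp}(1)$ to $\mathrm{Gamma}(2,1)$ is exactly what produces $\cR(x)=1-e^{-x}(1+x)$, and the mass-counting argument for the dominance of symmetric orbits is the natural one. You have also correctly identified the real difficulty: justifying that successive iterates $T^j z_0$ behave like fresh uniform samples, so that the hitting time of $\Fix{G}\cup\Fix{H}$ is asymptotically geometric with parameter $p=(g+h)/N$. This is genuinely the heart of the matter, and ``couple the partially revealed portions of $G$ and $H$ with fresh uniform samples'' is a statement of intent rather than a proof --- the iterates of $T=H\circ G$ expose the values of both involutions along the orbit in a correlated way, and bounding the total-variation error uniformly over $O(1/p)$ steps requires a careful sequential-revelation argument. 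If you want to turn this into an actual proof you should consult \cite{RobertsVivaldi09} directly, where this coupling is carried out.
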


The lattice periodicity of $\Phi$ \cite[section 5]{ReeveBlackVivaldi} provides a finite set 
---a discrete torus--- over which to define the period distribution function $\cD^e$ of $\Phi$.
However, as $e\to\infty$, the size of the space grows super-exponentially, making
an exhaustive computation infeasible \cite{ReeveBlack}.
However, all is not lost: if $e=n^2$ is a square, then the phase portrait 
is uniform, and it makes sense to sample the space over the much smaller characteristic 
length $\brho$ of the unperturbed twist map $T^e$.
Furthermore, numerical experiments show that the $\rho$-excursion of a typical periodic orbit 
is smaller than $\brho$. These circumstances will allow us to estimate the global period 
distribution function from local data.

We describe an extensive numerical experiment in which we calculate the period distribution 
function of $\Phi$ over suitably chosen invariant sets, along the subsequence of perfect 
squares $e=n^2$. 
The results suggests that, as $e\to\infty$, the distribution of periods approaches the 
distribution $\cR(x)$ of theorem \ref{thm:GammaDistribution}, and thus is consistent with 
random reversible dynamics. As in the case of the random reversible map, we observe that 
symmetric periodic orbits dominate.

%%%%%%%%%%%%%%%%%%%%%%%%%%%%%%%%%%%%%%%%%%%%%%%%%%%%%%%% FIGURE
\begin{figure}[h!]
        \centering
        \includegraphics[scale=1.1]{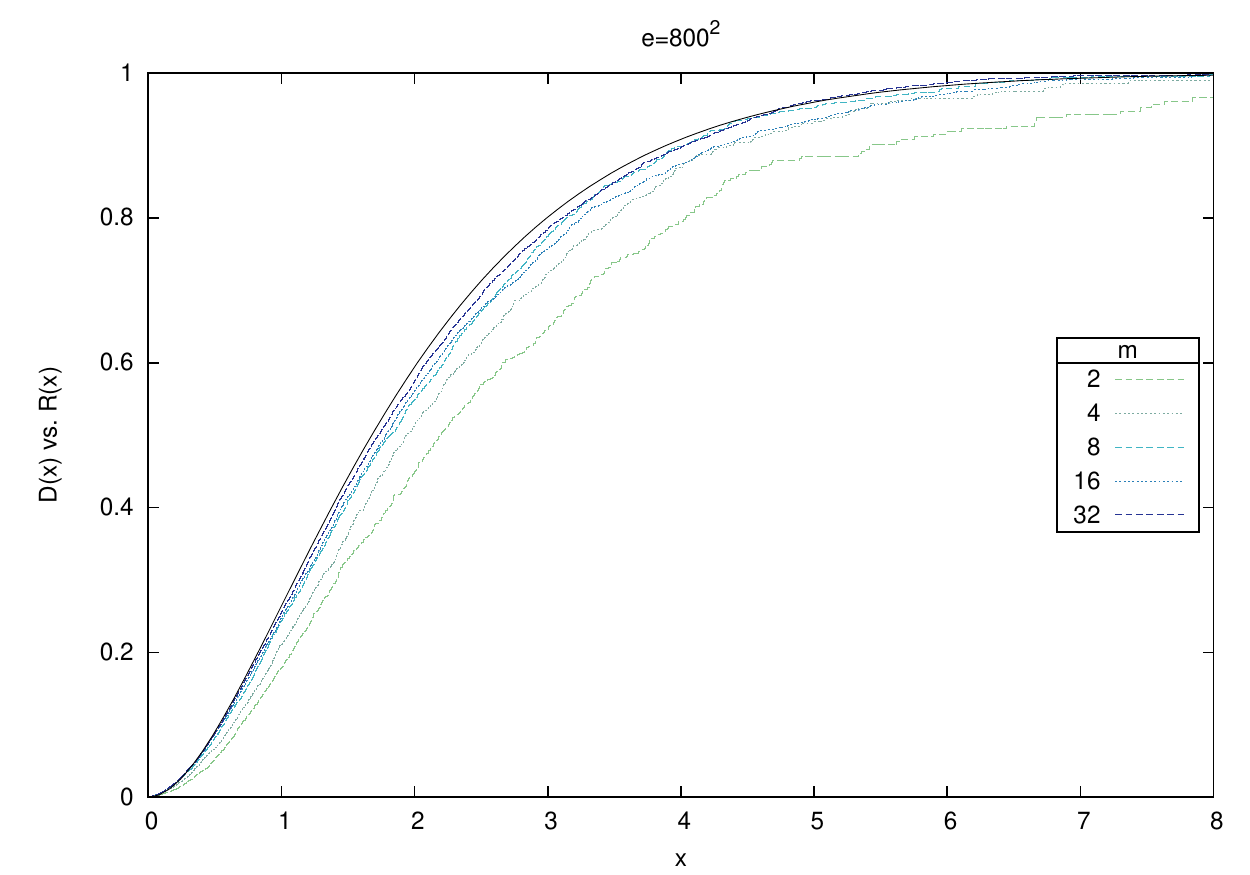} \\
        \caption{A number of the distributions $\cD(x)$ calculated for $n=800$ (green-blue dashed lines). 
        The number $m$ refers to the multiple of the characteristic length scale $\brho$ sampled. 
        The solid black line is the limiting distribution $\cR(x)$.}
        \label{fig:Distribution}
\end{figure}
%%%%%%%%%%%%%%%%%%%%%%%%%%%%%%%%%%%%%%%%%%%%%%%%%%%%%%%% 

%%%%%%%%%%%%%%%%%%%%%%%%%%%%%%%%%%%%%%%%%%%%%%%%%%%%%%%% FIGURE
\begin{table}[h!]
        \centering
        \begin{tabular}{ |c|c|c|c|c|}
                \hline
                $\vk$ & $m$ & Sample size & $\int (\cR-\cD) dx$ & Approx. time \\
                \hline
                $100$ & $32$ & $350\,000$ & $0.03$ & 15sec \\
                $200$ & $32$ & $1\,360\,000$ & $0.04$ & 2min \\
                $400$ & $32$ & $5\,370\,000$ & $0.06$ & 15min\\
                $800$ & $32$ & $21\,200\,000$ & $0.06$ & 2hr \\
                $1600$ & $16$ & $42\,900\,000$ & $0.11$ & 11hr \\
                \hline
        \end{tabular}
        \caption{Some data relating to the calculation of distributions $\cD$ for various values of $n$, 
        and their corresponding maximum values of $m$. 
        For a given value of $m$, nine distinct distributions were calculated: 
        for three different values of $\lambda$, each with three different values of $z_0$ (recall $z_0=(\eta^e)^{-1}(0,0))$.
        Neither the value of $\lambda$ nor $z_0$ was found to have any discernible effect on the distribution.
        The sample size refers to the number of points sampled during the calculation of each distribution, 
        similarly for the approximate computation time. 
        The integral $\int(\cR-\cD)dx$ was calculated over the interval $[0,16]$ (every experimental distribution 
        satisfies $\cD(16)=1$), and was averaged over the nine individual distributions calculated.}
        \label{table:D_table}
\end{table}
%%%%%%%%%%%%%%%%%%%%%%%%%%%%%%%%%%%%%%%%%%%%%%%%%%%%%%%%

\medskip

For $e=n^2\in\cE$, we wish to calculate a sequence of period distribution functions, 
which will serve as approximations to $\cD^e$ as $e\to\infty$.
We calculate these period distribution functions over a sequence of $\Phi$-invariant sets 
which mimic the fundamental domains defined in \eqref{eq:fundamental_domain} ---the natural 
invariant structures of the twist dynamics.

We consider subsets $A=A(e,m,\lambda)$ of $\Xe$ of the form
\begin{equation} \label{eq:A_m}
 A(e,m,\lambda) = \{ z\in\Xe \,: \; \nu(z) \in [-1/2,m-1/2] \} \hskip 40pt m\in\N. 
\end{equation}
For sufficiently small $\lambda$, the counterpart of $A$ on the cylinder
covers $m$ copies of the fundamental domain of the twist dynamics, so that as $e\to\infty$:
 $$ \# A \sim 2m(2\vI+1)^2|\brho| \to \infty $$
(cf.~equation \eqref{eq:pts_per_fundamental_domain}).

The set $A$ is not invariant under the perturbed dynamics $\Phi$. 
Hence we define $\bA$ to be the smallest invariant set which contains $A$:
\begin{equation*} %\label{eq:Abar}
 \bA(e,m,\lambda) = \bigcup_{k\in\Z} \Phi^k(A(e,m,\lambda)).
\end{equation*}
The finiteness of the set $\bA$ relies on the conjectured periodicity of all 
orbits (section \ref{sec:Introduction}).

We collect our findings as a series of observations.
First, we observe that the overspill from $A$ under the map $\Phi$, 
i.e., the set $\bA\setminus A$, 
is small relative to $A$ as $m\to\infty$ (see figure \ref{fig:NoOfPts}(a)).
On the basis of this observation, in what follows, it is assumed that there is a critical parameter value $\lambda_c(e,m)$ 
such that $\bA\subset\Xe$ for all $\lambda<\lambda_c$.

\begin{observation}
Let $e=n^2\in\cE$. Then for a sequence $(\lambda_m)$ of parameter values satisfying 
$\lambda_m<\lambda_c(n^2,m)$, we have:
$$ 
\frac{\#\bA(n^2,m,\lambda_m)}{\# A(n^2,m,\lambda_m)} \to 1\qquad m\to\infty.
$$
\end{observation}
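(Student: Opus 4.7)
The plan is to decompose $\bA = A \cup (\bA \setminus A)$ and to prove that the overspill has cardinality $o(\#A)$ as $m \to \infty$. By equation \eqref{eq:pts_per_fundamental_domain}, $\#A \sim 2m(2\vI+1)^2|\brho|$ grows linearly in $m$ (with $e=n^2$ fixed, so $\vI$ and $\brho$ are constants), and it suffices to show that $\#(\bA \setminus A)$ remains bounded independently of $m$ along the prescribed sequence $(\lambda_m)$.

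The first step is to localise the overspill geometrically. Under the coordinates $\eta^e$ of equation \eqref{eq:rho_theta}, the set $A$ occupies the cylindrical annulus $\{(\theta,\rho)\in\X : \nu(\theta,\rho) \in [-1/2, m-1/2]\}$, bounded by two horizontal circles $\rho = \rho_\pm$. Any $z' \in \bA \setminus A$ lies on a $\Phi$-orbit that meets $A$, so following the orbit until it enters (or exits) $A$ shows that $z'$ must arise from an orbit that crosses one of the two boundary circles. Hence the overspill is contained in two thin cylindrical bands of $\rho$-width at most $R(e,\lambda_m)$, where $R$ denotes the maximal $\rho$-excursion of a $\Phi$-orbit along the boundary. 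Using the lattice density $2(2\vI+1)^2$ per unit area on $\eta^e(\Xe)$ (cf.\ equation \eqref{eq:rho_theta_lattice}), this yields the estimate
$$
\#(\bA \setminus A) \leq 4\,R(e, \lambda_m)\,(2\vI+1)^2,
$$
which is independent of $m$ as soon as $R$ is.

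The task therefore reduces to proving a uniform bound on $R(e,\lambda_m)$. Proposition \ref{prop:mu_1} and the strip-map construction in equation \eqref{eq:Psi^jp1} show that each elementary perturbation has amplitude $O(\lambda)$, and that $\Phi$ concatenates $2\vk+2$ of them (equation \eqref{eq:Phi_Psi}). Reversibility of $\Phi$ with respect to the involution $G^e$ of Proposition \ref{prop:Ge} prevents any systematic drift in $\rho$, so that contributions of opposite sign should largely cancel along an orbit. Combined with the conjectured periodicity of every orbit (Section \ref{sec:Introduction}), which forces each orbit to be contained in some invariant annulus, this should deliver a bound $R(e,\lambda_m) \leq R_0(e)$ uniform in $m$.

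The principal obstacle is precisely this uniform bound on $R$. While reversibility prevents mean drift and the numerical evidence of Table \ref{table:nu_range} confirms that excursions stay small, converting these heuristics into a rigorous estimate is essentially a KAM-type stability problem for a non-smooth, discrete, reversible system, for which no analytic framework currently seems adequate. This is why the statement is presented as an observation supported by numerical evidence rather than as a theorem; a complete proof would likely require analytical progress either on the conjectured periodicity of $F$ or on the finer oscillatory structure of the period function $\cT$ at infinity.
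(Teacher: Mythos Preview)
Your assessment is correct: the paper offers no proof of this statement. It is explicitly labelled an \emph{Observation}, and the only support given is the numerical evidence in figure \ref{fig:NoOfPts}(a); the text simply states that ``the overspill from $A$ under the map $\Phi$, i.e., the set $\bA\setminus A$, is small relative to $A$ as $m\to\infty$'' and then proceeds to use this as a working assumption.

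Your heuristic outline---decomposing $\bA$ into $A$ and the overspill, using the linear growth of $\#A$ in $m$, and reducing the problem to a uniform bound on the $\rho$-excursion $R$ of orbits near the boundary---is a sensible framework and goes further than anything the paper attempts. Your identification of the obstacle is also accurate: bounding $R$ uniformly would require control over orbit excursions that the paper does not have (indeed, even the finiteness of $\bA$ rests on the unproved periodicity conjecture, as the paper itself notes). One minor refinement: rather than requiring $\#(\bA\setminus A)$ to be bounded, it would suffice for it to be $o(m)$, which is slightly weaker; but since neither can currently be established rigorously, this distinction is moot.

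In short, there is nothing to compare against: your final paragraph correctly diagnoses why the statement is an observation rather than a theorem, and that diagnosis matches the paper's own stance.
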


Then we measure the period distribution function $\cD=\cD(n^2,m,\lambda)$ of $\Phi$ over $\bA$:
\begin{displaymath}
 \cD(x) = \frac{\# \{ z\in \bA \,: \; \tau(z)\leqslant \gamma x \}}{\# \bA},
\end{displaymath}
where the scaling constant $\gamma=\gamma(n^2,m,\lambda)$ is given by
\begin{equation} \label{eq:Phigamma_approx}
 \gamma = \frac{2\#\bA}{g+h} \hskip 20pt g=\#\left(\Fix{G^e}\cap\bA\right) \hskip 20pt h=\#\left(\Fix{(\Phi\circ G^e)}\cap\bA\right).
\end{equation}

Assuming that the torus structure used to define $\cD^e$ does not affect the period
of the orbits, then, as $m\to\infty$, i.e., as we sample an increasing number of
characteristic lengths $\brho$, the empirical period distribution approaches $\cD^e$.
As $n$ increases, we need to ensure that $m$ is large enough to achieve numerical convergence.
In practice, small values of $m$ were sufficient (see table \ref{table:D_table}).

\medskip

Since $\Fix{G^e}$ is the pair of lines $x=y$ and $x-y=-\lambda(2\vI+1)$, 
the corresponding set on the cylinder is given by (cf.~\eqref{eq:rho_theta_lattice})
 $$ \eta^e(\Fix{G^e}) = \left\{ \frac{1}{2(2\vI+1)} \, (i,i+2j)\,: \; i\in\{-(2\vI+1),0\}, \; j\in\Z \right\}. $$
Intersecting this with $A$ (see equation (\ref{eq:A_m})) restricts the index $j$ according to
$$ 
\frac{i+2j}{2|\brho|(2\vI+1)} \in \left[ -\frac{1}{2}, m-\frac{1}{2} \right).
$$
Thus, equating $A$ with $\bA$ in the limit, we have
$$ 
g \sim \#\left(\Fix{G^e}\cap A\right) \sim 2m|\brho|(2\vI+1) \to \infty 
$$
as $m,n\to\infty$. 
%Since the ratio $\#\bA(n^2,m)/\# A(n^2,m)$ converges to one in this limit, we expect the same estimate to hold for $g$. 
The fixed space $\Fix{(\Phi\circ G^e)}$ is the lattice equivalent of the line $\Fix{H^e}$ 
of equation \eqref{eq:Fix(cH)}. 
We have the following experimental observation for the size of $h$ (see figure \ref{fig:NoOfPts}(b)).

\begin{observation} \label{obs:g_h}
Let $e=n^2\in\cE$. Then for a sequence $(\lambda_{n,m})$ of parameter values satisfying 
$\lambda_{n,m}<\lambda_c(n^2,m)$, we have:
$$ 
h \sim \frac{g}{\sqrt{2}} \hskip 40pt m,n\to\infty.
$$
\end{observation}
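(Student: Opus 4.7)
The plan is to reduce the asymptotic comparison $h\sim g/\sqrt{2}$ to a lattice-point-counting problem on the cylinder $\X$: $g$ can be computed in closed form from the lattice geometry, while $h$ requires a delicate analysis of the discrete perturbation $\Phi - T^e$ near the continuous fixed set $\Fix{H^e}$. The fact that the authors present this as an observation rather than a theorem is presumably due to this second step.

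First, I would compute $g$ exactly. By proposition \ref{prop:Ge}, the set $\Fix{G^e}$ is the union of the lines $x=y$ and $x-y=-\lambda(2\vI+1)$, which the change of coordinates $\eta^e$ sends to the two vertical lines $\theta=0$ and $\theta=-1/2$ on $\X$. These are principal directions of the rotated-square lattice \eqref{eq:rho_theta_lattice}, along which the one-dimensional lattice density is $(2\vI+1)$ in $\rho$. Summing over the $m$ fundamental domains that make up $\bA$ yields $g \sim 2m(2\vI+1)|\brho|$, which for $e=n^2$ reduces to $g \sim m(2\vI+1)/2$ via \eqref{eq:rho_bar_asymptotics}. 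Second, I would relate $h$ to the continuous fixed set of $H^e$, a pair of curves $\theta\equiv\kappa\rho/2\pmod{1/2}$ with slope $\pm\kappa/2\to \pm 2$. The naive enumeration of lattice points lying exactly on these curves, using the 1D density $(2\vI+1)/\sqrt{5}$ along direction $(\pm 2,1)$ and an arc-length $\sqrt{5}/4$ per branch per fundamental domain, produces $h_{\rm naive}\sim m(2\vI+1)/2$, i.e.\ the incorrect ratio $h/g\to 1$. The missing factor $1/\sqrt{2}$ must therefore come from the perturbation: a lattice point $z$ near $\Fix{H^e}$ is fixed by $\Phi\circ G^e$ if and only if the cumulative strip-map displacement in \eqref{eq:Phi_Psi} maps $G^e(z)$ back to $z$ exactly, and the empirical claim is that only a fraction $1/\sqrt{2}$ of the candidate lattice points meets this discrete condition.

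The main obstacle, accordingly, is an arithmetic characterization of this discrete fixed-point condition. I would decompose the displacement along a return orbit as a sum of floor-function residues modulo the components of the auxiliary field $\bfw_{m,n}$ (see \eqref{eq:Psi^jp1}) and seek equidistribution-type estimates that predict the density of its zeros on the lattice near $\Fix{H^e}$. A plausible source of the $\sqrt{2}$ is the mismatch between the principal lattice spacing $1/(\sqrt{2}(2\vI+1))$ and the effective resolution of $\Phi$ along the non-principal direction of slope $\pm 2$. An alternative probabilistic route would model the cumulative residue as a pseudorandom shift inherited from the floor function and extract $h\sim g/\sqrt{2}$ by direct averaging, paralleling the statistical framework of theorem \ref{thm:GammaDistribution}. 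Either approach would require control over the long-term arithmetic of $\lfloor\cdot\rfloor$ composed along perturbed twist orbits --- essentially the same obstacle that blocks the periodicity conjecture for $F$ itself --- which places a rigorous proof well beyond the qualitative scope of this section and justifies the status of the statement as an observation backed by numerical evidence.
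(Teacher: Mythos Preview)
The paper offers no proof of this statement: it is labelled an \emph{Observation} precisely because it is an empirical finding, supported only by the numerical data in figure~\ref{fig:NoOfPts}(b). Your recognition of this, and your derivation of the asymptotic $g\sim 2m|\brho|(2\vI+1)$, match the paper exactly (the paper carries out the same computation of $g$ in the paragraph immediately preceding the observation).

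Your attempt to go further and analyse $h$ heuristically is more than the paper does. The naive lattice count you perform for $\Fix{H^e}$ is correct for the unperturbed twist map with $\kappa=4$, and your conclusion that this yields $h_{\rm naive}\sim g$ rather than $g/\sqrt{2}$ is sound; the discrepancy must indeed come from the perturbation. Your speculation about the arithmetic origin of the factor $1/\sqrt{2}$ is plausible but, as you acknowledge, not a proof---and the paper makes no attempt at one either. So your proposal is appropriate in scope: it reproduces what the paper actually establishes (the formula for $g$) and correctly identifies the statement about $h$ as a numerical observation whose rigorous justification is out of reach.
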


%%%%%%%%%%%%%%%%%%%%%%%%%%%%%%%%%%%%%%%%%%%%%%%%%%%%%%%% FIGURE
\begin{figure}[t]
        \centering
        \begin{minipage}{7cm}
	  \centering
	  \includegraphics[scale=0.55]{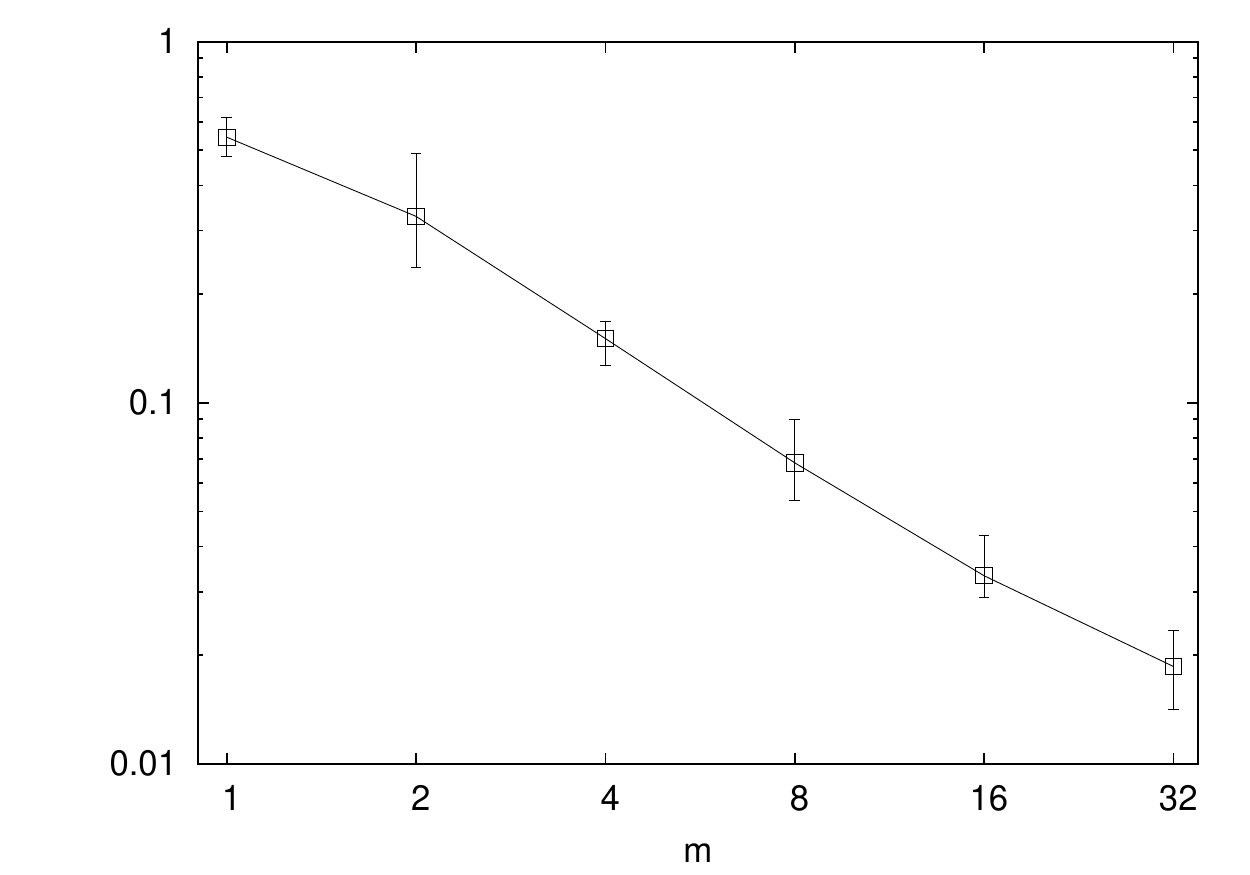} \\
	  (a) $\# \bA/\# A-1$ \\
	\end{minipage}
        \quad
        \begin{minipage}{7cm}
          \centering
	  \includegraphics[scale=0.55]{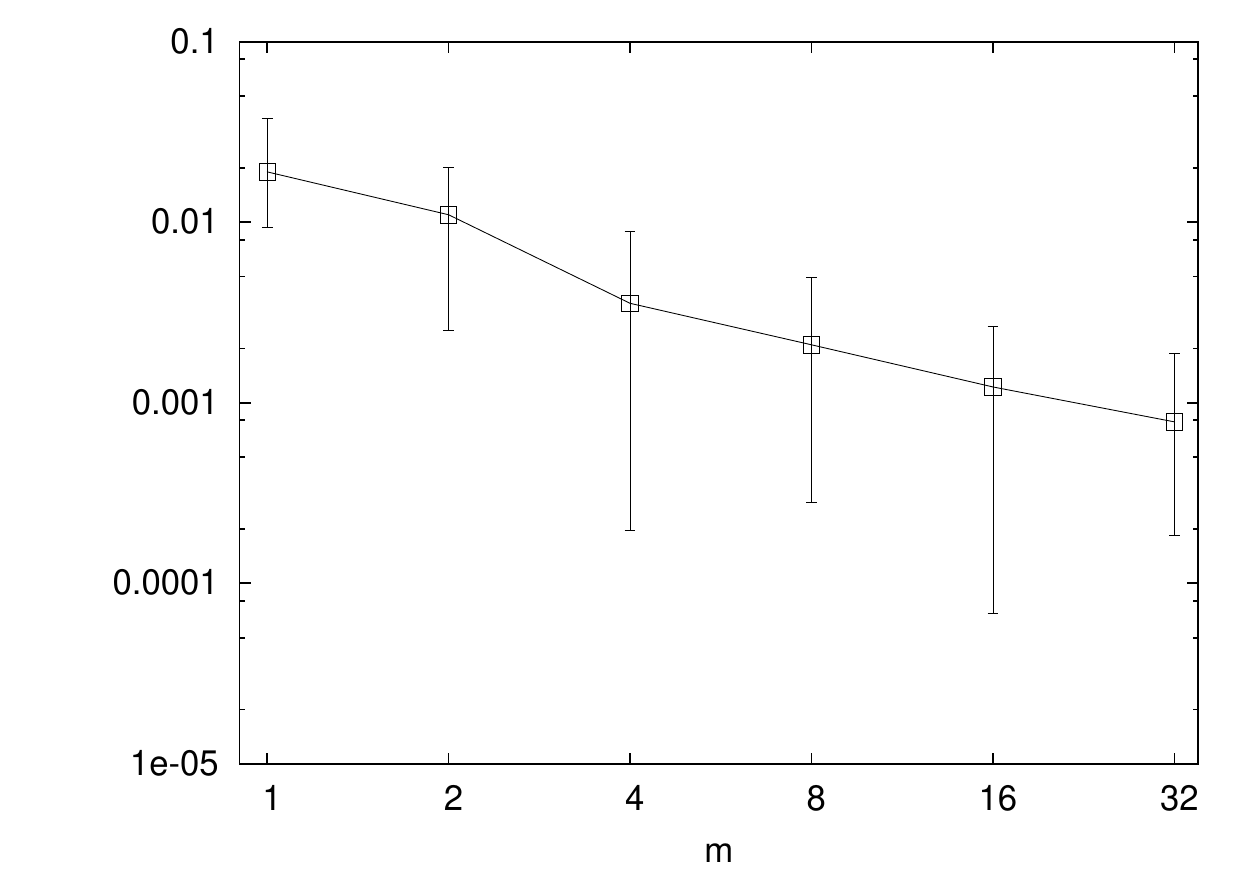} \\
	  (b) $h/g - 1/\sqrt{2}$ \\
        \end{minipage}
        \caption{The convergence (a) of the ratio $\# \bA/\# A$ to $1$ and (b) of the ratio $h/g$ to $1/\sqrt{2}$ as $m$ becomes large, for $\vk=800$. The line shows the average value of the relevant ratio among all experiments performed: the error bars indicate its minimum and maximum value. All axes are displayed with a logarithmic scale.}
        \label{fig:NoOfPts}
\end{figure}

%%%%%%%%%%%%%%%%%%%%%%%%%%%%%%%%%%%%%%%%%%%%%%%%%%%%%%%% 

From this observation, it follows that 
$$ 
\frac{g+h}{\#\bA} \sim \frac{(2+\sqrt{2})}{2(2\vI+1)} \to 0 
$$
as $m,n\to\infty$, and hence that the quantities $g$ and $h$ satisfy the conditions 
\eqref{eq:g_h_conds} of theorem \ref{thm:GammaDistribution}.
Indeed, we observe that the universal distribution $\cR(x)$ 
is the limiting distribution for $\cD$ in the limits $m,n\to\infty$ 
(see figure \ref{fig:Distribution}).

\begin{observation} \label{obs:De}
With $e=n^2$ and $(\lambda_{n,m})$ as in observation \ref{obs:g_h}, we have:
$$ 
\cD(n^2,m,\lambda_{n,m}) \to \cR 
\qquad m,n\to\infty,
$$
where $\cR$ is the universal distribution of equation \eqref{eq:R(x)}.
\end{observation}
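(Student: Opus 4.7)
My plan is to combine the asymptotic estimates of $g$, $h$ and $\#\bA$ available from the preceding observations with the probabilistic framework of Theorem \ref{thm:GammaDistribution}, and then to substantiate the resulting prediction numerically along the perfect-square subsequence $e=n^2$. First I would write $\Phi$ in the reversible form $H^e\circ G^e$ guaranteed by Proposition \ref{prop:Ge}, with $G^e$ as in \eqref{eq:Ge} and the second involution $H^e=\Phi\circ G^e$, and fix $\Omega=\bA(n^2,m,\lambda_{n,m})$ as the finite phase space on which to apply the theorem.

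The core step is verifying the two conditions \eqref{eq:g_h_conds}. From the explicit description of $\Fix{G^e}$ (the two parallel lattice lines $x=y$ and $x-y=-\lambda(2\vI+1)$) intersected with $A(n^2,m,\lambda)$ as in \eqref{eq:A_m}, counting the lattice points whose index $j$ satisfies $(i+2j)/(2|\brho|(2\vI+1))\in[-1/2,m-1/2)$ yields $g\sim 2m|\brho|(2\vI+1)$. Since by \eqref{eq:rho_bar_asymptotics} $|\brho|\to 1/4$ when $b=0$ and by hypothesis $n\to\infty$, this gives $g\to\infty$. Observation \ref{obs:g_h} then supplies $h\sim g/\sqrt{2}$, so $g+h\to\infty$. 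For the second condition, the preceding observation gives $\#\bA\sim\#A\sim 2m(2\vI+1)^2|\brho|$, hence
\begin{equation*}
 \frac{g+h}{\#\bA}\;\sim\;\frac{2+\sqrt{2}}{2(2\vI+1)}\;\to\;0.
\end{equation*}
Thus both conditions of Theorem \ref{thm:GammaDistribution} are satisfied for $\bA$, and the theorem predicts that a genuine random reversible map on $\bA$ with the same involution statistics would yield limiting period distribution $\cR(x)$ under the scaling $\gamma$ defined in \eqref{eq:Phigamma_approx}.

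The main obstacle, and the reason this is stated as an observation rather than a theorem, is that $\Phi$ is not random: Theorem \ref{thm:GammaDistribution} only describes the expected behaviour of a random pair of involutions, and no rigorous machinery currently bridges this to the deterministic orbit statistics of our arithmetic dynamical system. My proposal is therefore to treat the asymptotic match with $\cR$ as an empirical claim to be tested: I would compute $\cD(n^2,m,\lambda)$ exactly for a geometric progression of values $n=100,200,400,800,1600$ with $m$ chosen large enough that $\bA$ covers many characteristic lengths $\brho$, iterate $\F$ in integer arithmetic to avoid round-off error, and measure deviation from $\cR$ via an $L^1$-type functional $\int_0^{16}|\cR-\cD|\,dx$ as tabulated in Table \ref{table:D_table}.

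Three practical points warrant attention. First, the super-exponential growth of the true phase space forces sampling a $\Phi$-invariant window $\bA$ of size $\sim m|\brho|$ rather than a full lattice period; this window is a meaningful sample because, for $b=0$, Corollary \ref{corollary:Omega_asymptotics} gives a persistent twist $\kappa\to 4$ and the phase portrait (as in Figure \ref{fig:resonance}(a)) is globally homogeneous. Second, the sample faithfully reflects $\cD^e$ only if typical orbit excursions in $\rho$ stay within the window, a locality property supported by Table \ref{table:nu_range}(a). Third, $\lambda_{n,m}$ must be below the (empirical) critical value $\lambda_c(n^2,m)$ so that $\bA\subset\Xe$; invariance under other choices of $\lambda$ and of the basepoint $z_0$ provides a robustness check. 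Under these controls the numerical evidence converges to $\cR$, thereby supporting Observation \ref{obs:De}.
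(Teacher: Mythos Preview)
Your proposal is correct and follows essentially the same approach as the paper: you verify the hypotheses \eqref{eq:g_h_conds} of Theorem \ref{thm:GammaDistribution} from the asymptotics of $g$, $h$ and $\#\bA$, acknowledge that the passage from the random model to the deterministic map $\Phi$ is not rigorous, and then substantiate the claim numerically along $e=n^2$ using the same parameter ranges and the same integral deviation measure as in Table \ref{table:D_table}. The paper likewise presents this as an empirical observation supported by Figure \ref{fig:Distribution} rather than a proved statement.
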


Finally we note that, as in theorem \ref{thm:GammaDistribution}, 
the symmetric orbits of $\Phi$ have full density (see figure \ref{fig:symm_points}).

\begin{observation} \label{obs:symm_points}
Let $S=S(e,m,\lambda)$ be the set of points in $\bA$ whose orbit under $\Phi$ is symmetric:
$$ S = \{ z\in \bA \,: \; \cO(z)=G^e(\cO(z)) \}. $$
Furthermore, let $e=n^2$ and $(\lambda_{n,m})$ be as in observation \ref{obs:g_h}.
Then, asymptotically, $S$ has full density in $\bA(m)$:
$$ 
\frac{\#S(n^2,m,\lambda_{n,m})}{\#\bA(n^2,m,\lambda_{n,m})}\to 1
\qquad m,n\to\infty.
$$
\end{observation}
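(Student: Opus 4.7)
My plan is to combine the structural characterisation of symmetric orbits of a reversible map with the quantitative information already obtained in Observations \ref{obs:g_h} and \ref{obs:De}, then verify the predicted density numerically in the same spirit as those observations. Writing $H^e=\Phi\circ G^e$ (so that $\Phi=H^e\circ G^e$), the starting point is the standard fact that a periodic orbit $\cO(z)$ of $\Phi$ is symmetric if and only if it meets $\Fix{G^e}\cup\Fix{H^e}$, and every symmetric orbit intersects this union in exactly two points. Hence the number $N_s$ of symmetric orbits in $\bA$ satisfies $N_s=(g+h)/2$, which yields the identity
$$
\frac{\#S}{\#\bA}\;=\;\frac{N_s\,\overline{|\cO|}_s}{\#\bA}\;=\;\frac{\overline{|\cO|}_s}{\gamma},
$$
where $\overline{|\cO|}_s$ denotes the orbit-weighted mean length of symmetric orbits and $\gamma$ is the scaling constant of \eqref{eq:Phigamma_approx}.

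Next, I would use Observation \ref{obs:De}: since $\cD\to\cR$ and the mean of $\cR$ as a distribution of $\tau/\gamma$ is $2$, the point-weighted mean period approaches $2\gamma$, which forces the orbit-weighted mean length over all orbits to equal $\gamma$ in the limit. The claim $\#S/\#\bA\to 1$ is thus equivalent to asserting that symmetric and asymmetric orbits share the same limiting length distribution. The trivial bound $\#S\leqslant\#\bA$ already gives $\overline{|\cO|}_s\leqslant\gamma$ via the identity above; the matching lower bound is the substantive content of the observation.

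For the numerical verification, I would reuse the sampled orbits on $\bA(n^2,m,\lambda_{n,m})$ from the experiments behind Observations \ref{obs:g_h} and \ref{obs:De}. For each orbit encountered, comparing a single image $G^e(z)$ to the points of $\cO(z)$ certifies symmetry in time linear in the orbit length, so no additional asymptotic cost is incurred. The ratio $\#S/\#\bA$ is tabulated against $m$ and $n$ and plotted, producing figure \ref{fig:symm_points}, with the robustness of the convergence tested against variation in $\lambda$ and in the base point $z_0$ used to define $\eta^e$.

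The principal obstacle is rigour: the heuristic argument leaves open the possibility of a sparse family of unusually long asymmetric orbits, a pathology that is ruled out by the independence assumptions underlying Theorem \ref{thm:GammaDistribution} but not a priori for a specific deterministic map like $\Phi$. I do not expect to close this gap analytically; instead, in keeping with the style of this section, I would present Observation \ref{obs:symm_points} as supported by the same body of numerical evidence that already confirms the matching of the period distribution $\cD$ with the universal law $\cR$, since a failure of the symmetric-orbit density would be visible as a discrepancy in that matching.
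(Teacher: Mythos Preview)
The paper offers no argument for this statement: it is presented purely as an empirical finding, supported solely by the data in figure~\ref{fig:symm_points}. Your proposal therefore goes beyond the paper by supplying a structural heuristic before turning to numerics.

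Your combinatorial identity is correct and worth keeping: writing $N_s$ for the number of symmetric orbits, the standard two-point intersection property of symmetric orbits gives $N_s=(g+h)/2$, and since $\gamma=\#\bA/N_s$ one obtains $\#S/\#\bA=\overline{|\cO|}_s/\gamma$ exactly. This is a clean reformulation that the paper does not record. However, the next step in your heuristic is circular. Knowing that the \emph{point-weighted} distribution $\cD$ over all of $\bA$ tends to $\cR$ (with mean $2$) does not by itself force the \emph{orbit-weighted} mean length of the symmetric orbits to be $\gamma$: the missing ingredient is precisely that asymmetric orbits carry negligible mass, which is the content of the observation you are trying to justify. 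Your claimed equivalence with ``symmetric and asymmetric orbits share the same limiting length distribution'' is likewise not quite right --- if asymmetric orbits have vanishing density they may have any length distribution whatsoever. You do flag this as a gap, so no harm is done, but the heuristic paragraph should be trimmed to the identity and the honest statement that it reduces the observation to $\overline{|\cO|}_s\to\gamma$, which is then checked numerically.

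In short: the paper's ``proof'' is just figure~\ref{fig:symm_points}; your numerical plan reproduces that, and your added identity is a genuine (if modest) improvement in exposition, provided you do not oversell the heuristic link to Observation~\ref{obs:De}.
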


%%%%%%%%%%%%%%%%%%%%%%%%%%%%%%%%%%%%%%%%%%%%%%%%%%%%%%%% FIGURE
\begin{figure}[!h]
        \centering
        \includegraphics[scale=0.55]{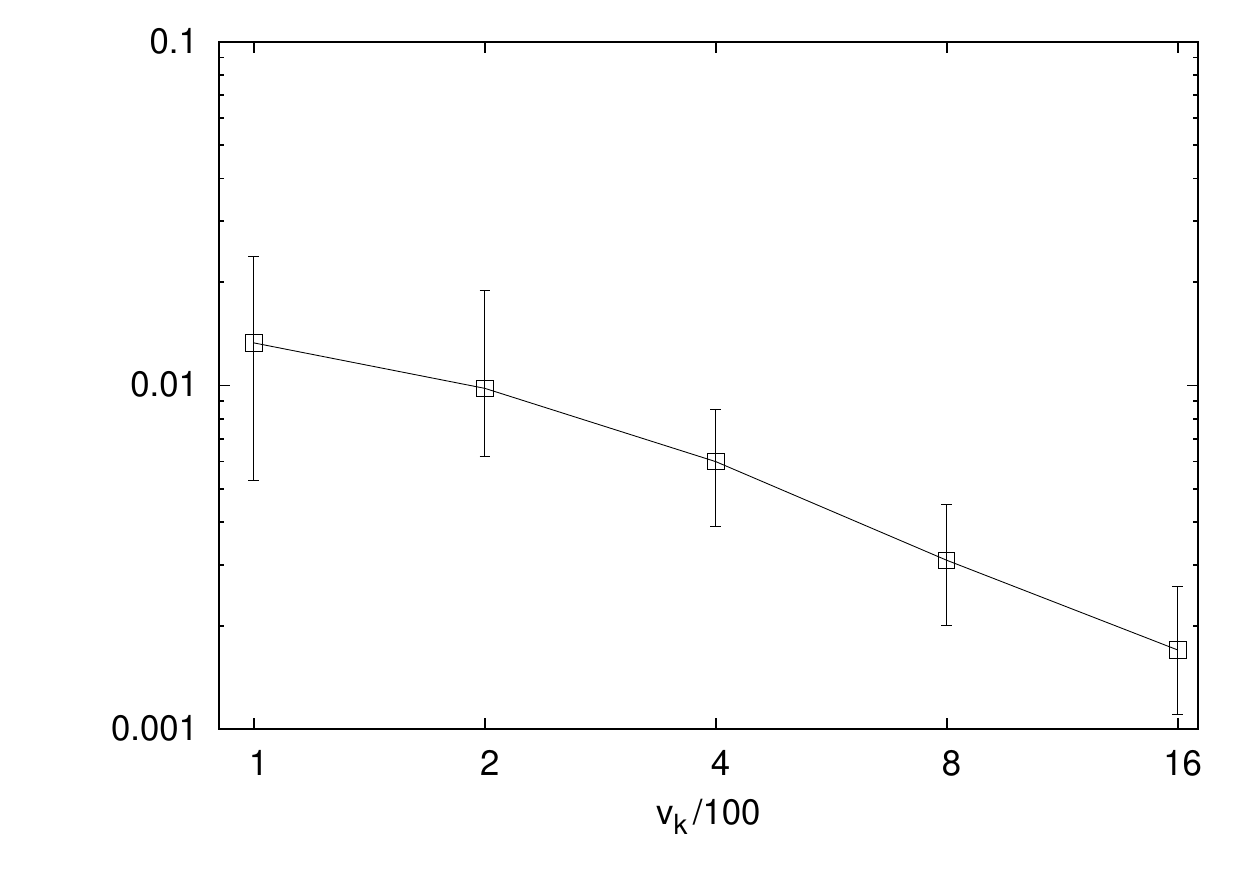} 
        \caption{The quantity $1-\#S/\#\bA$ as $\vk=n$ becomes large. 
        The line shows the average value of the relevant ratio over all experiments performed 
        (including over $m=1,2,4,8,16,32$---unlike the distribution $\cD$ of figure \ref{fig:Distribution}, 
this ratio does not vary significantly with $m$): the error bars indicate its minimum and maximum value.
        The axes are displayed with a logarithmic scale.}
        \label{fig:symm_points}
\end{figure}

%%%%%%%%%%%%%%%%%%%%%%%%%%%%%%%%%%%%%%%%%%%%%%%%%%%%%%%%

\section*{Acknowledgements} We thank John A. G. Roberts for useful discussion.
HRB thanks the School of Mathematics and Statistics at the University of New
South Wales, Sydney, for their hospitality while part of this work was carried out. 

\bibliographystyle{alpha}
\bibliography{Bibliography}

\begin{thebibliography}{ABPT06}

\bibitem[ABPS06]{AkiyamaBrunottePethoSteiner06}
S.~Akiyama, H.~Brunotte, A.~Peth\H{o}, and W.~Steiner.
\newblock Remarks on a conjecture on certain integer sequences.
\newblock {\em Periodica Math. Hungarica}, 52:1--17, 2006.

\bibitem[ABPS08]{AkiyamaBrunottePethoSteiner08}
S.~Akiyama, H.~Brunotte, A.~Peth\H{o}, and W.~Steiner.
\newblock Periodicity of certain piecewise affine integer sequences.
\newblock {\em Tsukuba J. Math.}, 32:197--251, 2008.

\bibitem[ABPT06]{AkiyamaBrunottePethoThuswaldner}
S.~Akiyama, H.~Brunotte, A.~Peth\H{o}, and J.~M. Thuswaldner.
\newblock Generalized radix representations and dynamical systems {II}.
\newblock {\em Acta Arith.}, 121:21--61, 2006.

\bibitem[AP13]{AkiyamaPetho}
S.~Akiyama and A.~Peth\H{o}.
\newblock Discretized rotation has infinitely many periodic orbits.
\newblock {\em Nonlinearity}, 26:871--880, 2013.

\bibitem[BV00]{BosioVivaldi}
D.~Bosio and F.~Vivaldi.
\newblock Round-off errors and $p$-adic numbers.
\newblock {\em Nonlinearity}, 13:309--322, 2000.

\bibitem[dFH13]{Hutz}
J.A. de~Faria and B.~Hutz.
\newblock Combinatorics of cycle lengths on {W}ehler {K}3 surfaces over finite
  fields.
\newblock arXiv:1309.6598, 2013.

\bibitem[Iwa05]{Iwao}
M.~Iwao.
\newblock Analysis of ultradiscrete hamiltonian systems with lattice polygons.
\newblock {\em J. Phys. Soc. Japan}, 74:3167--3178, 2005.

\bibitem[KLV02]{KouptsovLowensteinVivaldi02}
K.L. Kouptsov, J.H. Lowenstein, and F.~Vivaldi.
\newblock Quadratic rational rotations of the torus and dual lattice maps.
\newblock {\em Nonlinearity}, 15:1795--1482, 2002.

\bibitem[LHV97]{LowensteinHatjispyrosVivaldi}
J.H. Lowenstein, S.~Hatjispyros, and F.~Vivaldi.
\newblock Quasi-periodicity, global stability and scaling in a model of
  {H}amiltonian round-off.
\newblock {\em Chaos}, 7:49--66, 1997.

\bibitem[LV98]{LowensteinVivaldi98}
J.H. Lowenstein and F.~Vivaldi.
\newblock Anomalous transport in a model of {H}amiltonian round-off.
\newblock {\em Nonlinearity}, 11:1321--1350, 1998.

\bibitem[LV00]{LowensteinVivaldi00}
J.H. Lowenstein and F.~Vivaldi.
\newblock Embedding dynamics for round-off errors near a periodic orbit.
\newblock {\em Chaos}, 10:747--755, 2000.

\bibitem[Nob08]{Nobe}
A.~Nobe.
\newblock Ultradiscrete {QRT} maps and tropical elliptic curves.
\newblock {\em J. Phys. A}, 41:125205 (12pp), 2008.

\bibitem[NRV12]{NeumarkerRobertsVivaldi}
N.~Neum{\"a}rker, J.A.G. Roberts, and F.~Vivaldi.
\newblock Distribution of periodic orbits for the {C}asati-{P}rosen map on
  rational lattices.
\newblock {\em Physica D}, 241:360--371, 2012.

\bibitem[RB14]{ReeveBlack}
H.~Reeve-Black.
\newblock {\em Near-Integrable behaviour in a family of discretized rotations}.
\newblock PhD thesis, Queen Mary, University of London, 2014.

\bibitem[RBV13]{ReeveBlackVivaldi}
H.~Reeve-Black and F.~Vivaldi.
\newblock Near-integrable behaviour in a family of discretized rotations.
\newblock {\em Nonlinearity}, 26:1227--1270, 2013.

\bibitem[RV05]{RobertsVivaldi05}
J.A.G. Roberts and F.~Vivaldi.
\newblock Signature of time-reversal symmetry in polynomial automorphisms over
  finite fields.
\newblock {\em Nonlinearity}, 18:2171--2192, 2005.

\bibitem[RV09]{RobertsVivaldi09}
J.A.G. Roberts and F.~Vivaldi.
\newblock A combinatorial model for reversible rational maps over finite
  fields.
\newblock {\em Nonlinearity}, 22:1965--1982, 2009.

\bibitem[Sch11]{Schwartz11}
R.E. Schwartz.
\newblock Outer billiards and the pinwheel map.
\newblock {\em J.~Mod.~Dyn.}, 5:255--283, 2011.

\bibitem[Tab95]{Tabachnikov}
S.~Tabachnikov.
\newblock On the dual billiard problem.
\newblock {\em Advances in Math.}, 115:221--249, 1995.

\bibitem[Viv94]{Vivaldi94b}
F.~Vivaldi.
\newblock Periodicity and transport from round-off errors.
\newblock {\em Exp.~Math.}, 3:303--315, 1994.

\bibitem[Viv06]{Vivaldi06}
F.~Vivaldi.
\newblock The arithmetic of discretised rotations.
\newblock In {\em AIP Conf.~Proc.}, volume 826, pages 162--173, 2006.

\bibitem[VV03]{VivaldiVladimirov}
F.~Vivaldi and I.~Vladimirov.
\newblock Pseudo-randomness of round-off errors in discretized linear maps on
  the plane.
\newblock {\em Int.~J.~of Bifurcations and Chaos}, 13:3373--3393, 2003.

\end{thebibliography}

\end{document}